

 \documentclass[12pt]{amsart}
\usepackage{graphicx, tikz}
\usepackage{xcolor}
\usepackage{amssymb,amsfonts,amsthm,amsmath,calligra, enumerate, stmaryrd}
\usepackage[utf8]{inputenc}
\usepackage{accents}
\usepackage{slashed}
\usepackage{yfonts}
\usepackage{mathrsfs,pifont}
\theoremstyle{definition}
\usepackage[all]{xy}

\usepackage[bookmarksnumbered=true, bookmarksopen=true]{hyperref}

\def\be{\begin{eqnarray}}
\def\ee{\end{eqnarray}}
\def\ben{\begin{eqnarray*}}
\def\een{\end{eqnarray*}}

\usepackage{geometry}
\geometry{margin=1in}

\allowdisplaybreaks

\newcommand{\bC}{\mathbb{C}}

\newcommand{\bP}{\mathbb{P}}

\newcommand{\bR}{\mathbb{R}}

\newcommand{\bZ}{\mathbb{Z}}

\newcommand{\bp}{\mathbf{p}}

\newcommand{\br}{\mathbf{r}}

\newcommand{\cA}{\mathcal{A}}

\newcommand{\cC}{\mathcal{C}}
\newcommand{\cD}{\mathcal{D}}
\newcommand{\cE}{\mathcal{E}}
\newcommand{\cF}{\mathcal{F}}
\newcommand{\cG}{\mathcal{G}}

\newcommand{\cK}{\mathcal{K}}

\newcommand{\cM}{\mathcal{M}}
\newcommand{\cN}{\mathcal{N}}
\newcommand{\cO}{\mathcal{O}}

\newcommand{\cR}{\mathcal{R}}
\newcommand{\cT}{\mathcal{T}}
\newcommand{\cU}{\mathcal{U}}
\newcommand{\cV}{\mathcal{V}}
\newcommand{\cW}{\mathcal{W}}

\newcommand{\fE}{\mathfrak{E}}

\newcommand{\fg}{\mathfrak{g}}

\newcommand{\fk}{\mathfrak{k}}
\newcommand{\fM}{\mathfrak{M}}

\newcommand{\fX}{\mathfrak{X}}
\newcommand{\fY}{\mathfrak{Y}}

\newcommand{\sA}{\mathsf{A}}
\newcommand{\sB}{\mathsf{B}}
\newcommand{\sG}{\mathsf{G}}
\newcommand{\sK}{\mathsf{K}}
\newcommand{\sN}{\mathsf{N}}
\newcommand{\Pol}{\mathsf{Pol}}
\newcommand{\sT}{\mathsf{T}}
\newcommand{\sW}{\mathsf{W}}

\newcommand{\ab}{\mathrm{ab}}
\newcommand{\BFN}{\mathrm{BFN}}

\newcommand{\loc}{\mathrm{loc}}

\newcommand{\pol}{\mathrm{pol}}
\newcommand{\rat}{\mathrm{rat}}

\newcommand{\taut}{\mathrm{taut}}

\newcommand{\vir}{\mathrm{vir}}

\newcommand{\ann}{\operatorname{ann}}

\newcommand{\chr}{\operatorname{char}}
\newcommand{\cochar}{\operatorname{cochar}}

\newcommand{\Cone}{\operatorname{Cone}}
\newcommand{\diag}{\operatorname{diag}}
\newcommand{\Eff}{\operatorname{Eff}}

\newcommand{\ev}{\operatorname{ev}}
\newcommand{\ex}{\operatorname{ex}}
\newcommand{\ff}{\operatorname{ff}}

\newcommand{\Gr}{\operatorname{Gr}}

\newcommand{\Hom}{\operatorname{Hom}}

\newcommand{\pr}{\operatorname{pr}}

\newcommand{\pt}{\operatorname{pt}}
\newcommand{\QM}{\operatorname{QM}}

\newcommand{\sgn}{\operatorname{sgn}}

\newcommand{\Spec}{\operatorname{Spec}}

\newcommand{\la}{\langle}
\newcommand{\ra}{\rangle}

\theoremstyle{definition}
\newtheorem{Definition}{Definition}[section]
\newtheorem{Assumption}[Definition]{Assumption}

\newtheorem{Remark}[Definition]{Remark}
\newtheorem{Example}[Definition]{Example}
\numberwithin{equation}{section}

\theoremstyle{Theorem}
\newtheorem{Theorem}[Definition]{Theorem}
\newtheorem{Proposition}[Definition]{Proposition}
\newtheorem{Lemma}[Definition]{Lemma}
\newtheorem{Corollary}[Definition]{Corollary}

\begin{document}
	
	\title{Virtual Coulomb branch and vertex functions}
	\author{Zijun Zhou}
	\date{}

\begin{abstract}
\sloppy We introduce a variant of the $K$-theoretic quantized Coulomb branch constructed by Braverman--Finkelberg--Nakajima, by application of a new virtual intersection theory. 
In the abelian case, we define Verma modules for such virtual Coulomb branches, and relate them to the moduli spaces of quasimaps into the corresponding Higgs branches. 
The descendent vertex functions, defined by $K$-theoretic quasimap invariants of the Higgs branch, can be realized as the associated Whittaker functions. 
The quantum $q$-difference modules and Bethe algebras (analogue of quantum $K$-theory rings) can then be described in terms of the virtual Coulomb branch.
As an application, we prove the wall-crossing result for quantum $q$-difference modules under the variation of GIT. 
Nonabelian cases are also treated via abelianization. 
\end{abstract}

	\maketitle

\setlength{\parskip}{1ex}

\tableofcontents

\vspace{1ex}

\section{Introduction}

\subsection{Motivation and background}

Let $\sG$ be a complex reductive group and $\sN$ be a representation. 
The pair $(\sG, T^*\sN)$ defines a 3d $\cN = 4$ supersymmetric gauge theory, which in the last two decades has received a lot of attention in both mathematics and physics. 
Such theories admit two interesting components in their moduli spaces of vacua, known as the \emph{Higgs branch} and \emph{Coulomb branch}. 
The famous phenomenon of 3d mirror symmetry \cite{PhysMir3, PhysMir1, PhysMir2,Ga-Wit, HW, BDGH} predicts the existence of certain mirror pairs whose Higgs and Coulomb branches are exchanged with each other. 

Much progress has been made in understanding the $K$-theoretic enumerative geometry arising from the Higgs branch, which is mathematically the holomorphic symplectic quotient $X = T^*\sN /\!/\!/ \sG$. 
The \emph{vertex function} $V(Q)$ was introduced by A. Okounkov \cite{pcmilec}, defined by $K$-theoretic counting of quasimaps from $\bP^1$ to the Higgs branch. 
Its most important property is to satisfy two sets of $q$-difference equations, in terms of the K\"ahler and equivariant parameters respectively, which are conjectured to be switched under 3d mirror symmetry. 
Together with the uniqueness, this implies that the vertex functions of a 3d mirror pair are equal to each other, up to a nontrivial transition matrix in terms of the elliptic stable envelope \cite{AOelliptic, Oko-ind}. 
3d mirror symmetry for vertex functions for Higgs branches such as hypertoric varieties, $T^*\Gr(k,n)$ and $T^*Fl_n$ has been studied in \cite{SZ, Din-Gr, Din-Fl}. 

Another interesting enumerative invariant is the \emph{Bethe algebra} \cite{NS1, AOBethe, PSZ, KPSZ}, which as an analogue of the quantum $K$-theory ring, can be obtained as the $q\to 1$ limit of the $q$-difference module generated by vertex functions. 
An example of recent work on the 3d mirror symmetry of Bethe algebras for the instanton moduli spaces has been done in \cite{KZ-ins}. 

The Coulomb branch, however, has been much less studied. 
Only recently was its mathematical construction introduced by Braverman--Finkelberg--Nakajima \cite{Nak-I, BFN, BFN-III}. 
Their approach is to consider the affine Grassmannian $\Gr_\sG$, and certain moduli spaces $\cR$ of triples $(P, \varphi, s)$ over it. 
Here $(P, \varphi)$ is a point in $\Gr_\sG$ and $s$ is some section of the associated vector bundle $\sN_\cO$. 
Using the geometry of $\cR$, and essentially the convolution product on $\Gr_\sG$, they construct a product $*$ on the equivariant Borel--Moore homology $\cA := H^{G_\cO}_* (\cR)$, realizing it as a commutative convolution algebra. 
The spectrum $\Spec \cA$ is then defined as the Coulomb branch, which is an (usually singular) affine variety. 
One can also consider the $G_\cO \rtimes \bC^*_q$-equivariant theory, which produces a noncommutative convolution algebra, called the \emph{quantized Coulomb branch}. 
Similar construction works in $K$-theory, and defines the $K$-theoretic Coulomb branch. 

A physics construction for the quantized Coulomb branch is given by Bullimore--Dimofte--Gaiotto \cite{BDG}, where the monopole operators are counterparts of the generators $r_d$ in BFN's approach for abelian theories. 
Coulomb branches in general then embed into their abelianizations, after appropriate localization of parameters.   
It was then observed in \cite{BDGHK} that the quantized Coulomb branch acts on the moduli space of quasimaps to the 3d $\cN = 2$ Higgs branch of $\sN /\!/ \sG$ (i.e. GIT quotient, rather than the hyperk\"ahler quotient), realizing the homology of the latter as a \emph{Verma module}. 
The $J$-function of $\sN /\!/\sG$ can then be expressed in terms of the Verma module. 
There is also some recent work \cite{HKW} in this direction. 

\subsection{Main idea}

As both the Higgs and Coulomb branch are associated with a 3d $\cN = 4$ theory, we could naturally ask how the Coulomb branch interacts with the 3d $\cN = 4$ Higgs branch. 
However, it seems not easy to do so with BFN's original definition, and we will introduce a variant.

To explain the main idea, we start with a simple example. 
Consider $\sG = \bC^*$ and $\sN = \bC^{n+1}$, whose Higgs branch is $T^*\bP^n$. 
The vertex function can be written down as
\begin{equation} \label{V-TP}
V(Q) = \sum_{d=0}^\infty Q^d \prod_{i=1}^{n+1} (-q^{1/2} \hbar^{-1/2} )^d \frac{(1- \hbar a_i s) \cdots (1-q^{d-1} \hbar a_i s)}{(1 - q a_i s) \cdots (1-q^d a_i s)} , 
\end{equation}
where $Q$ and $q$, $\hbar$, $a_i$'s are the K\"ahler and equivariant parameters respectively.
$s$ is the tautological line bundle. 

Considering the geometry of quasimaps from $\bP^1$ to $T^*\bP^n$, one could find that those with nontrivial degrees are exactly those mapping into the zero-section $\bP^n$, simply because they need to have proper images. 
However, the vertex function for $T^*\bP^n$ is different from the $K$-theoretic $I$-function for $\bP^n$, because their \emph{deformation-obstruction} theories are differernt. 
The cotangent direction of $T^*\bP^n$ is responsible for the obstruction part of the obstruction theory, which yields the numerators in (\ref{V-TP}). 

The key part of BFN's construction for the Coulomb branch involves the following diagram
$$
\xymatrix{
\cT \times \cR & \sG_\cK \times \cR \ar[l]_-p \ar[r]^-q & \sG_\cK \times_{\sG_\cO} \cR \ar[r]^-m & \cT,   
}
$$
where $m$ is ind-proper, $q$ is a $\sG_\cO$-fibration (which is smooth), and $p$ is a composition of a diagonal embedding with a $\sG_\cO$-fibration. 
In particular, $p$ is an l.c.i. morphism. 
The convolution product is then defined via the usual ``pullback-pushforward" operation $m_* \circ (q^*)^{-1} \circ p^!$. 

The result of this definition can be described explicitly in the abelian case. 
For the example $T^*\bP^n$, the equivariant $K$-theoretic quantized Coulomb branch is generated over $\bC [a_1^{\pm 1}, \cdots, a_{n+1}^{\pm 1}, q^{\pm 1}, s^{\pm 1}]$ by $r_d$, $d\in \bZ$, up to the relations
$$
r_c \cdot r_d = r_{c+d}, \ \text{ if $c, d$ have the same sign}; \quad r_{-d} \cdot r_d = \prod_{i=1}^{n+1} (1 - q a_i s) \cdots (1-q^d a_i s). 
$$
We see that the denominator in (\ref{V-TP}) can be interpreted as $r_{-d} \cdot r_d$, but the numerators cannot. 
In other words, \emph{only the deformation part is detected}. 

To solve this obstacle, our idea is to introduce \emph{nontrivial perfect obstruction theories} in the definition of the Coulomb branch, as well as the convolution product. 
Concretely, we consider symmetric relative perfect obstruction theories for the projection\footnote{This only works in the abelian case. In general, one works with $\cT \to \Gr_\sG$. See Section \ref{sec-virtual-BFN}.} $\cR \to \Gr_\sG$, where the obstruction part is the dual of the deformation part.  
This then defines virtual pullbacks from $\Gr_\sG$ to $\cR$, which are virtual analogues of the structure sheaves $r_d$. 

To make this definition work for the convolution product, we need to deal with l.c.i. morphisms equipped with obstruction theories of amplitude in $[-2, 0]$, which are not perfect in the usual sense. 
In Section \ref{sec-v-i-t}, we define the \emph{virtual l.c.i. pullback} for such maps, by treating the obstruction part as a virtual normal bundle, which then appears in the denominator. 

The construction results in a definition of the \emph{virtual $K$-theoretic quantized Coulomb branch} in Section \ref{sec-v-c-b}. 
In particular, the ``excess" part of the obstruction has to carry a nontrivial equivariant weight $y = q^{-1} \hbar$ for the definition to work. 
Here $q$ is the quantization parameter coming from the scaling of the formal disk, and $\hbar$ is a new parameter, implicilly identified with the scaling of the cotangent fiber $T^*\sN$. 
The virtual Coulomb branch is then an $\hbar$-deformation of BFN's Coulomb branch, with the coefficient ring appropriately localized.

The relations for the example $T^*\bP^n$ then look like
$$
r_{-d} \cdot r_d = \prod_{i=1}^{n+1} (-q^{1/2} \hbar^{-1/2} )^{-d} \frac{(1 - q a_i s) \cdots (1-q^d a_i s)}{(1-\hbar a_i s) \cdots (1-q^{d-1} \hbar a_i s) }, 
$$
which resembles terms in the vertex function. 
Similar explicit presentations are written down for all abelian cases in Section \ref{sec-ab}. 

\begin{Remark}
The virtual construction and the extra $\hbar$-deformation may seem unnatural from the existing literature on Coulomb branches. 
However, as we explained above, our motivation mainly comes from the enumerative geometry, where virtual classes are ubiquitous. 
There are also examples of virtual classes appearing in convolution algebras, e.g. \cite{SV}. 
This paper is a first attempt to introduce virtual counting to the study of Coulomb branches.
Another import reason is that we hope the techniques developed here can be used to study the enumerative geometry of 3d mirror symmetry \cite{AOelliptic}, for which it is crucial to consider the $\bC^*_\hbar$-equivariant theory. 
\end{Remark}

\begin{Remark}
The same construction could also be done for the homological Coulomb branch, which would yield similar results for cohomological vertex functions and quantum cohomology. 
There are at least two reasons that we choose to work out the $K$-theoretic case. 
First, compared to the homological case, relatively less is known in $K$-theoretic enumerative geometry. 
For example, in our case, the wall-crossing result in Section \ref{sec-wall-cx} is already known in the homological case by Maulik--Okounkov's formula of quantum multiplication by divisors \cite{MO}, which is not available in $K$-theory. 
Second, again for possible applications in 3d mirror symmetry \cite{AOelliptic}, it is important to consider $K$-theoretic vertex functions, where K\"ahler and equivariant parameters are both multiplicative and possible to be exchanged with each other.
\end{Remark}

\subsection{Mixed polarization and Verma modules}

To make the above idea work for more general cases, some extra modifications are needed. 
The reason is that, the construction requires the choice of a Lagrangian subspace $\sN$ in the symplectic representation $T^*\sN$, which we call a \emph{polarization}. 
The nice property of $T^*\bP^n$ which makes the above idea work is that, the vector bundle associated with $\sN$ on $T^*\bP^n$ is always \emph{non-negative} with respect to the quasimaps. 
This is not satisfied by Higgs branches in general. 

For that purpose, our idea is to choose different polarizations according to different degree classes of quasimaps. 
This is done in Section \ref{sec-Verma-vertex}. 
In the abelian case, we analyze in detail how the virtual Coulomb branch behaves under the change of polarizations, and finally define the virtual Coulomb branch $\cA_\sT (\sK, \sN)_X$ with \emph{mixed polarization}. 
Here $\sK$ is the abelian gauge group, $\sT = (\bC^*)^n$ is the standard flavor torus acting on $\sN = \bC^n$, and $X$ is the associated hypertoric variety, i.e. the abelian Higgs branch. 
Compared to the ``full localized" virtual Coulomb branch $\cA_\sT (\sK, \sN)_\loc$, whose Cartan subalgebra is a field, $\cA_\sT (\sK, \sN)_X$ is a subalgebra depending on the Higgs branch $X$. 
It is defined such that we carefully choose which elements in Cartan to invert. 
It turns out that this choice can be nicely described in terms of the $K$-theory presentation of the Higgs branch (see Section \ref{Sec-val}).

For each fixed point $\bp \in X^\sT$, we introduced the notion of Verma module $M(\bp)$, associated with $\bp$. 
Let $\QM(X)^\circ_d$ be the moduli space of quasimaps from $\bP^1$ to $X$ of degree $d$, where $\infty$ maps to the stable locus; we denote by $\QM(X; \bp)_d^\circ \subset \QM(X)_d^\sT$ its subspace of quasimaps into $\bp$, and $K_0^{\sT \times \bC_\hbar^* \times \bC^*_q} (\QM (X; \bp)^\circ_d )_\loc := K_0^{\sT \times \bC_\hbar^* \times \bC^*_q} (\QM (X; \bp)^\circ_d ) \otimes \bC(q)$.  
We have the following result.

\begin{Theorem} [Theorem \ref{thm-verma}]
Let $X$ be a hypertoric variety, $\bp \in X^\sT$ be a fixed point, and $\Eff(\bp)$ be its effective cone. 
There is a natural action of $\cA_\sT (\sK, \sN)_\bp$ on 
$$
\bigoplus_{d\in \Eff(\bp)} K_0^{\sT \times \bC_\hbar^* \times \bC^*_q} (\QM (X; \bp)^\circ_d )_\loc,
$$
realizing it as the Verma module $M(\bp)$. 
\end{Theorem}

\subsection{Vertex functions and quantum $q$-difference modules}

Following the idea of \cite{Bra-ins-1}, we define the Whittaker vector $W_\bp (Q)$ for the virtual Coulomb branch $\cA_\sT (\sK, \sN)_\bp$, which lives in a completion of $M(\bp)$. 
The vertex function can then be written as the \emph{Whittaker function}. 

\begin{Proposition}[Proposition \ref{thm-V-Verma}]
The descendent vertex function restricted to $\bp$ is the descendent Whittaker function associated with $W_\bp (Q)$
$$
V^{(\tau (s) )} (Q) \Big|_\bp = \la W_\bp (Q), \tau (s) W_\bp (Q) \ra. 
$$
\end{Proposition}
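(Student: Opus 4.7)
The plan is to expand both sides in powers of $Q$ and match, for each degree $d \in \Eff(\bp)$, the coefficient of the descendent vertex function at $\bp$ with the coefficient of the Whittaker pairing. Under the identification of Theorem \ref{thm-verma}, $M(\bp) \cong \bigoplus_{d \in \Eff(\bp)} K_{\sT \times \bC_\hbar^* \times \bC^*_q}(\QM(X; \bp)^\circ_d)_\loc$, the Whittaker vector, following \cite{Bra-ins-1}, should be the formal series $W_\bp(Q) = \sum_d Q^d\, [\hat{\cO}^{\vir}_d]$ of virtual structure sheaves of $\QM(X; \bp)^\circ_d$, which are cyclic vectors under the $\cA_\sT(\sK, \sN)_\bp$-action.

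First I would unpack the definition of the descendent vertex function: by $K$-theoretic virtual localization on $\QM(X)^\circ_d$ with respect to the $\sT$-action, the restriction $V^{(\tau(s))}(Q)|_\bp$ reduces to a sum of Euler characteristics on the fixed loci $\QM(X;\bp)^\circ_d$ weighted by the evaluation $\ev_0^*\tau(s)$, divided by the equivariant Euler class of the virtual normal bundle at $\bp$. Next I would use the standard ``gluing at two points'' decomposition on $\bP^1$: writing $\bP^1$ as two copies of $\bA^1$ glued at a $\bC^*_q$-equivariant point, the degree $d$ splits as $d = d_0 + d_\infty$, and each half contributes a virtual structure sheaf of a half-quasimap space — the building block of $W_\bp(Q)$ — while the descendent class $\tau(s)$ inserted at $0$ couples the two halves.

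Having set this up, the identity becomes the statement that the bilinear pairing $\la \cdot, \tau(s) \cdot \ra$ on $M(\bp) \otimes M(\bp)$ — defined via virtual pushforward to a point normalized by the virtual normal bundle of $\bp \subset X$ — agrees termwise with the $\sT$-localized residue of $\chi(\QM(X)^\circ_d, \hat\cO^{\vir} \cdot \tau(s))$. This is a direct consequence of the compatibility of virtual localization with the convolution product defining the $\cA_\sT(\sK, \sN)_\bp$-action, which was established when proving Theorem \ref{thm-verma}: the generators $r_d$ act by virtual pullback-pushforward in a way that mirrors, at the level of the quasimap moduli, the Atiyah--Bott contribution of each fixed locus.

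The main obstacle I expect is the precise matching of normalizations at the two ends of $\bP^1$. The Whittaker pairing carries factors from the virtual normal bundle at $\bp$ (the polarization data entering $\cA_\sT(\sK, \sN)_\bp$) together with the symmetrization factors $(-q^{1/2}\hbar^{-1/2})$ characteristic of the symmetrized virtual structure sheaf $\hat{\cO}^{\vir}$, and these must exactly cancel the corresponding factors produced by $\sT$-localization of the obstruction theory on $\QM(X)^\circ_d$. Tracking this bookkeeping through the mixed-polarization change-of-polarization rule of Section \ref{sec-Verma-vertex} is the delicate step; once it is carried out, the identification $\tau(s) \leftrightarrow$ multiplication by the tautological $s$ in the virtual Coulomb branch at $\bp$ follows from the definition of the action, and the desired equality drops out as the sum over $d \in \Eff(\bp)$ of matched coefficients.
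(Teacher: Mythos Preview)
Your approach is substantially different from the paper's, and more complicated than necessary. The paper's proof is a short, purely algebraic computation: expand $\la W_\bp(Q), \tau(s) W_\bp(Q)\ra$ using the explicit Whittaker vector, observe that the contravariant form is diagonal in the weight grading so only the terms $c=d$ survive, commute $\tau(s)$ past $\br_d$ via Lemma \ref{shift-r-x} to get $\tau(q^{\la e_1^*,d\ra}s_1,\dots)|_\bp \cdot (\br_{-d}\br_d)|_\bp^{-1}$, and then read off the explicit product $(\br_{-d}\br_d)|_\bp$ from (\ref{ab-rel-eff}). This is matched against the closed formula for $V^{(\tau(s))}(Q)|_\bp$ already established in Proposition \ref{formula-V}. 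No localization, no gluing, no tracking of obstruction-theory normalizations is performed at this stage --- all of that geometric content was absorbed earlier into Propositions \ref{formula-V} and \ref{prop-mix}.

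Your plan has a conceptual misstep. You propose a ``gluing at two points'' factorization with $d = d_0 + d_\infty$ and a sum over splittings. But the contravariant form on $M(\bp)$ pairs weight-$c$ vectors nontrivially only with weight-$c$ vectors (Lemma \ref{vanishing}, 2)), and $\tau(s)$ lies in the Cartan (weight $0$). Hence in $\la W_\bp(Q),\tau(s)W_\bp(Q)\ra$ only the diagonal $c=d$ survives, and the half-powers $Q^{c/2}Q^{d/2}$ collapse to $Q^d$; there is no genuine factorization sum. Your anticipated ``main obstacle'' of matching normalizations across two halves therefore does not arise. Also note that the Whittaker vector carries $Q^{d/2}$, not $Q^d$, and its coefficients are $\br_d v / (\br_{-d}\br_d)|_\bp$ rather than bare virtual structure sheaves --- this rescaling is exactly what makes the diagonal pairing produce a single $(\br_{-d}\br_d)|_\bp^{-1}$ rather than a square. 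If you want to redo the proof geometrically, the honest route is simply to rederive Proposition \ref{formula-V} by $\bC^*_q$-localization (each $\QM(X;\bp)^\circ_d$ contributes a single fixed point) and then perform the same algebraic match; the factorization picture adds nothing here.
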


The nonabelian vertex functions can then be described via abelianization. 
Let $(\sG, \sN)$ be as in the general case, $\sK \subset \sG$ be a maximal torus, and $X$ be the holomorphic symplectic quotient. 
Under some geometric assumptions on $X$, we have the following result.
Here $\tilde\bp$ is a lift of $\bp$ in the abelianized variety, and $\Delta(s) := \prod_{\alpha\in \Phi} (\hbar s^\alpha)_\infty / (q s^\alpha)_\infty$; $v$ is the highest weight vector of the abelianized Verma module $M(\tilde\bp)$.

\begin{Proposition}[Proposition \ref{Prop-V-Verma-nonab}]
Assume that $X$ admits isolated $(\sA\times \bC^*_\hbar)$-fixed points. 
The descendent vertex function of $X$ restricted to $\bp$ is
\ben
V^{(\tau(s))} (X; Q) \Big|_\bp &=& \frac{1}{\Delta(s) |_\bp} \cdot V^{(\tau (s) \Delta (s))} (X^\ab; \tilde Q) \Big|_{\tilde Q^d \mapsto Q^{\bar d}}, 
\een
where $\bar d$ is the image of $d$ in $\pi_1 (\sG)$.
\end{Proposition}

\sloppy Let $\widetilde V^{(\tau(s))} (X; Q)$ be the renormalized vertex function obtained from $V^{(\tau(s))}(X; Q)$ by the multiplication of a prefactor (see Definition \ref{Defn-vertex}).
The quantum $q$-difference module generated by the descendent vertex functions can also be described in terms of the virtual Coulomb branch. 
As a corollary, the Bethe algebras are also described. 

\begin{Theorem}[Theorem \ref{Thm-D-mod-nonab}]

Assume that $X$ admits isolated $(\sA \times \bC^*_\hbar)$-fixed points. 
\begin{enumerate}[1)]
\setlength{\parskip}{1ex}

\item The $\cD_q$-module generated by all descendent vertex functions $\widetilde V^{(\tau(s))} (X ;Q)$ over $K_{\sA \times \bC^*_\hbar} (\pt)_\loc$ is isomorphic to \footnote{The tensor product $\otimes_\bC$ is as $\bC [ q^{\chi Q \partial_{Q}} , \chi\in \chr(\sK)^\sW ]$-modules, not as algebras.}
$$
\dfrac{
\bC [[Q^{\Eff(X)}]] \otimes_\bC \cA_\sA^0 (\sK, \sN)^\sW_X
}{
\left\la 1\otimes \br_{wc} \tau (s) \br_{-wc} \cdot  \prod_{\alpha\in \Phi} \frac{(q s^\alpha)_{-\la \alpha, w c \ra}}{(\hbar s^\alpha)_{-\la \alpha, wc\ra }} - Q^{\bar c} \otimes \tau(s)   \right\ra \cap \left( \bC [[Q^{\Eff(X)}]] \otimes_\bC \cA_\sA^0 (\sK, \sN)^\sW_X \right)
}, 
$$
where $c$ ranges over $\Eff(X^\ab) \cap \cochar(\sG)_+$ and $w$ ranges over $\sW$. 
$\widetilde V^{(\tau (s))} (X; Q)$ is sent to $1 \otimes \tau(s)$. 

\item The Bethe algebra of $X$ over $K_{\sA \times \bC^*_\hbar} (\pt)_\loc$ is isomorphic to
$$
\dfrac{ 
\cA_\sA^0 (\sK, \sN)^\sW_{X, q=1} [[Q^{\Eff(X)}]] }{ \left\la  \br_{wc} \br_{-wc} \big|_{q=1} \cdot \prod_{\alpha\in \Phi} \frac{(1-s^\alpha)^{-\la \alpha, w c \ra}
}{
(1- \hbar s^\alpha)^{-\la \alpha, wc\ra }}  - Q^{\bar c} \right\ra \cap \cA_\sA^0 (\sK, \sN)^\sW_{X, q=1} [[Q^{\Eff(X)}]] 
} , 
$$
where $c$ ranges over $\Eff(X^\ab) \cap \cochar(\sG)_+$ and $w$ ranges over $\sW$. $\tau(s) |_{q=1} \in \cA^0_\sA (\sK, \sN)_{X, q=1}$ is sent to the quantum tautological class $\widehat\tau (Q)$. 

\end{enumerate}

\end{Theorem}

A description of the non-localized quantum $q$-difference module also follows (Corollary \ref{Cor-refinement}).
This result can be interpreted as a $K$-theoretic generalization of the quantum Hikita conjecture \cite{KMP}.
See Appendix \ref{sec-q-Hikita} for a discussion.

\subsection{Wall-crossing}

Let $X$ and $X'$ be holomorphic symplectic quotients obtained from the same theory $(\sG, \sN)$, with different choices of stability conditions. 
The virtual Coulomb branch (with mixed polarization) admits good properties under the variation of GIT of the Higgs branches. 
As a result, in Section \ref{sec-wall-cx} we prove the wall-crossing property for the quantum $q$-difference modules (and therefore the Bethe algebras).

\begin{Theorem}[Theorem \ref{Thm-wall-cx}]
Based changed over $\cD_q^\rat$, the quantum $q$-difference modules $\cD_q^{\mathrm{pol}} (X)$ and $\cD_q^{\mathrm{pol}} (X')$ generated by descendent vertex functions $\widetilde V^{(\tau(s))} (X; Q)$ and $\widetilde V^{(\tau(s))} (X'; Q)$ for $X$ and $X'$ over $K_{\sA\times \bC^*_\hbar} (\pt)$ are isomorphic. 
In particular, their $q$-difference equations are the same.
The same holds for the Bethe algebras. 
\end{Theorem}

This proves the \cite[Conjecture 1]{Din-cap}, which states that the $q$-difference equations satisfied by vertex functions stay unchanged under the variation of GIT.

\subsection{Acknowledgement}

The author would like to thank Jun Li and Peng Shan for the introduction and suggestion of a project related to Coulomb branches, and discussions at the early stage of this project.
We would also like to thank Tudor Dimofte, Hiraku Nakajima, Andrei Okounkov and Andrey Smirnov for helpful discussions. 
This work is supported by World Premier International Research Center Initiative (WPI), MEXT, Japan.

\vspace{2em}

\section{Virtual intersection theory} \label{sec-v-i-t}

Intersection theory has been studied for a long time in algebraic geometry, and plays a fundamental role in defining invariants in enumerative geometry. 
In the spririt of \cite{Ful}, for an appropriately chosen class of schemes, a well-behaved intersection theory requires the introduction of several crucial ingredients: proper pushforward, flat pullback, exterior product, and most importantly, the refined Gysin pullback along regular embeddings. 

Let us recall the definition of the intersection product in $K$-theory. Let $X$ be a smooth scheme, and $a, b\in K(X)$ \footnote{We denote $K(X) := K^0(X)$ for smooth schemes.}. 
The product $a\otimes b$ is naturally defined for representatives of $a$ and $b$ which are (virtual) vector bundles. 
Alternatively, it can also be defined as
$$
a \otimes b := \Delta^! (a \boxtimes b), 
$$
where $\Delta^!$ is the $K$-theoretic refined Gysin pullback. The diagonal map $\Delta: X \to X \times X$ is a regular embedding by the smoothness of $X$. 

In the modern world of enumerative geometry, due to the singularity of moduli spaces, it becomes inevitable to count the numbers \emph{virtually}. 
By that, we mean the moduli spaces are not considered simply as a scheme or stack, but equipped with an extra structure, called the \emph{perfect obstruction theory} \cite{LT, BF}. 
The virtual fundamental cycle, obtained in this way, enters the definition of the enumerative invariants as the usual fundamental cycle. 

The notion of perfect obstruction theory is later generalized to morphisms $f$ of Deligne--Mumford type, producing the virtual pullback $f^!$ \cite{Man, Qu}, which can be regarded as a virtual analogue of the refined Gysin pullback.
Given the notion of the virtual pullback, it is then natural to introduce a virtual notion of  intersection products. This is done by Kiem--Park \cite{KP}, in a very general context. In this paper, we will apply it to a special case. 

\subsection{Virtual pullbacks}

Let us recall the definition of virtual pullback in \cite{Man, Qu}. Let $f: X \to Y$ be a DM-type morphism between algebraic stacks, and let $L_f \in D_{\rm qcoh}^{\leq 0} (X)$ be its cotangent complex. An obstruction theory is a map $\phi: E^\bullet \to L_f$ in $D_{\rm qcoh} (X)$, such that $h^0 (\phi)$ is an isomorphism and $h^{-1} (\phi)$ is surjective. An obstruction theory is called perfect, if $E^\bullet$ is a perfect complex of amplitude $[-1, 0]$. 

Based on the ideas in \cite{LT, BF}, a perfect obstruction theory is equivalent to a closed embedding $\iota: \cC_f \hookrightarrow \cE_f$, where $\cC_f$ is the intrinsic normal cone stack of the map $f$, and $\cE_f := h^1 / h^0 (E^{\bullet\vee})$ is the vector bundle stack defined by $E^\bullet$. The virtual pullback $f^!: K_0(Y) \to K_0(X)$ is then defined \cite{Qu} as the composition
$$
\xymatrix{
	K_0 (Y) \ar[r]^-{\sigma_f} & K_0 (\cC_f) \ar[r]^-{\iota_*} & K_0 (\cE_f) \ar[r]^-{0^!} & K_0 (X), 
}
$$
where $\sigma_f$ is the deformation-to-normal-cone map, and $0^!$ is the refined Gysin pullback along the zero section of the vector bundle stack $\fE$. 

As in \cite[Construction 3.13]{Man}, let $X$ and $Y$ be DM stacks, equipped with relative perfect obstruction theories $E_{X / \fM}^\bullet$ and $E_{Y/\fM}^\bullet$ over an algebraic stack $\fM$. Note that we do \emph{not} assume that $\fM$ is smooth. Let $f: X \to Y$ be a morphism, and $\varphi: f^* E_{Y/\fM}^\bullet \to E_{X/\fM}^\bullet$ be a map commuting with $f^* L_{Y/\fM} \to L_{X / \fM}$. Then, by completing the triangles, one can construct an obstruction theory $\phi: E^\bullet_f \to L_f$, which fits into the following diagram
\begin{equation} \label{triple}
\xymatrix{
	f^* E_{Y/\fM}^\bullet \ar[r] \ar[d] & E_{X/\fM}^\bullet
	\ar[r] \ar[d] & E_f^\bullet \ar[r] \ar[d] &  f^* E_{Y/\fM}^\bullet [1] \ar[d] \\
	f^* L_{Y/\fM} \ar[r] & L_{X/\fM} \ar[r] & L_f \ar[r] & f^* L_{Y/\fM} [1], 
}
\end{equation}
where both rows are distinguished triangles. 

In the special case where $E^\bullet_f$ happens to be perfect (e.g. if $Y$ is smooth over $\fM$), the virtual pullback $f^!_\vir$ is well-defined. 
Denote by $\pi_X : X \to \fM$ and $\pi_Y : Y \to \fM$ the projection maps. 
The three relative obstruction theories form a compatible triple, satisfying:
$$
f^!_\vir \circ \pi_{Y, \vir}^! = \pi_{X, \vir}^!. 
$$
In particular, if $\fM$ is smooth, one has the virtual structure sheaves $\cO_{Y, \vir} = \pi_{Y, \vir}^! \cO_\fM$, and $\cO_{X, \vir} = \pi_{X, \vir}^! \cO_\fM$. As a result, $f^!_\vir \cO_{Y,\vir} = \cO_{X,\vir}$.

However, the obstruction theory $E^\bullet_f$, in general, has amplitude in $[-2, 0]$, and hence fails to be perfect. This happens in a particular case which we are interested in, where $f$ is a regular embedding. In the following, we would like to explore another possibility where $f^!_\vir$ could be defined and admits good functorial properties.

\subsection{Virtual l.c.i. pullback} 

Let $X$ and $Y$ be quasi-projective DM stacks.
In the rest of this section, we will assume that $f: X \to Y$ is a \emph{locally complete intersection morphism}, i.e. $f$ is a regular embedding followed by a smooth projection.
Equivalently, the cotangent complex $L_f$ is perfect of amplitude $[-1, 0]$, which defines a tautological perfect obstruction theory $E^\bullet_{f, \taut}:= L_f$, whose associated pullback is the usual l.c.i. pullback $f^!$. 

Now, suppose that $f$ is equipped with an obstruction theory $E_f^\bullet$, of amplitude $[-2, 0]$.
Let
$$
\ex (E^\bullet_f) := \Cone (E_f^\bullet \to L_f) [-1],
$$
which fits into the distinguished triangle $\ex (E^\bullet_f) \to E^\bullet_f \to L_f \to \ex (E^\bullet_f)[1]$. 
Since $E_f^\bullet$ is an obstruction theory, $\ex (E^\bullet_f)$ is perfect of amplitude $[-2, -1]$.

For the invertibility of K-theoretic Euler class, we need to work in the equivariant setting. 
Let $\bC^*$ be a 1-dimensional torus, whose equivariant character we denote by $y \in K_{\bC^*} (\pt)$. 
Let $\bC^*$ act trivially on $X$ and $Y$.
We make the following assumption.
\begin{Assumption} \label{Ass-invert}
$\ex (E^\bullet_f)$ is of weight $1$ under the $\bC^*$-action. 
\end{Assumption}
Since $L_f$ has trivial $\bC^*$-weights, this assumption implies that $E_f^\bullet$ actually splits into the direct sum of $\ex (E^\bullet_f)$ and $L_f$. 

Let $K^{\bC^*}_0 (X)_\loc := K_0 ( X ) \otimes_\bC \bC(y)$ be the localized $K$-group.
The assumption implies the invertibility of the $K$-theoretic Euler class
$$
\bigwedge^\bullet [y\otimes  \ex (E^\bullet_f)] := \bigwedge^\bullet [y\otimes (h^{-2} \ex (E^\bullet_f) - h^{-1} \ex (E^\bullet_f)) ]
$$
in $K^{\bC^*}_0 (X)_\loc$. 

\begin{Definition} \label{pullback}

Suppose that Assumption \ref{Ass-invert} holds.
\begin{enumerate}[1)]
\setlength{\parskip}{1ex}

\item Define the \emph{virtual l.c.i. pullback} to be
	$$
	f_\vir^! := \frac{f^!}{\bigwedge^\bullet [y\otimes \ex (E^\bullet_f) ] } : \ K_0^{\bC^*} (Y) \to K_0^{\bC^*} (X)_\loc, 
	$$
	where $f^!: K^{\bC^*}_0 (Y) \to K^{\bC^*}_0 (X)$ is the usual l.c.i. pullback. 

\item Given a Cartesian diagram
\begin{equation} \label{Cart}
\xymatrix{
X' \ar[r]^-{f'} \ar[d]_-{u'} & Y' \ar[d]^-u \\
X \ar[r]^-f & Y, 
}
\end{equation}
The virtual l.c.i. pullback is defined as
$$
f_\vir^! := \frac{f^!}{\bigwedge^\bullet [ y\otimes \ex ( (u')^* E^\bullet_f) ]} : \ K_0^{\bC^*} (Y') \to K_0^{\bC^*} (X')_\loc,
$$
where $f^!: K^{\bC^*}_0 (Y') \to K^{\bC^*}_0 (X')$ is the usual l.c.i. pullback.

\end{enumerate}

\end{Definition}

\begin{Lemma}
\begin{enumerate}[1)]
\setlength{\parskip}{1ex}

\item The virtual l.c.i. pullback commutes with proper pushforwards, flat pullbacks, and the usual l.c.i. pullback. In other words, $f_\vir^!$ is a bivariant class. 

\item In diagram (\ref{Cart}), when $u$ is flat, and hence $f'$ is also l.c.i., the pull-back complex $(u')^* E_f^\bullet$ is an obstruction theory of amplitude $[-2,0]$ for $f'$, and one has $(u')^* \circ f_\vir^! = (f')_\vir^! \circ u^*$. 

\item If $E_f^\bullet = L_f$, the virtual l.c.i. pullback $f_\vir^!$ coincides with the usual l.c.i. pullback $f^!$.

\item Under the limit $y\to 0$, we have $\lim_{y\to 0} f_\vir^! = f^!$. 

\end{enumerate}

\begin{proof}
1) follows from the compatibility of the usual l.c.i. pullback and ($K$-theoretic) Chern classes with proper pushforwards, flat pullbacks and the refined Gysin pullback. 
2) follows from the fact that $L_{f'} = (u')^* L_f$ when $u$ is flat. 
3) and 4) are obvious. 
\end{proof}

\end{Lemma}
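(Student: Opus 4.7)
The plan is to view $f_\vir^!$ as the usual l.c.i. pullback $f^!$ multiplied by the inverse of the $K$-theoretic Euler class of the perfect complex $\ex(E_f^\bullet)$ (twisted by the scaling weight $y$). Each of the four assertions then splits into (a) a statement about the functoriality of $f^!$, which is classical, and (b) a statement about how the Euler-class denominator behaves under the operation in question. I would handle each part in this factored form.

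For (1), I would invoke the classical fact that the usual l.c.i. pullback is a bivariant class, commuting with proper pushforward (projection formula), flat pullback (flat base change), and refined Gysin maps (commutativity of intersection on l.c.i. morphisms). The denominator is the $K$-theoretic Euler class of a fixed perfect complex on $X$, so multiplication by its inverse commutes with all three operations by compatibility of Chern classes with pullback and by the projection formula for proper maps. For (2), the key observation is that flatness of $u$ ensures $(u')^* L_f \cong L_{f'}$ via flat base change of the cotangent complex, so pulling back the obstruction-theory morphism $\varphi: E_f^\bullet \to L_f$ by $u'$ preserves the defining conditions (isomorphism on $h^0$ and surjection on $h^{-1}$), since $(u')^*$ is exact. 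In particular the amplitude $[-2,0]$ is preserved and the cone pulls back, yielding $\ex((u')^* E_f^\bullet) \cong (u')^* \ex(E_f^\bullet)$. With this identification, the compatibility $(u')^* \circ f_\vir^! = (f')_\vir^! \circ u^*$ is just the flat pullback case of (1).

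For (3), when $E_f^\bullet = L_f$ the defining triangle $\ex(E_f^\bullet) \to E_f^\bullet \to L_f$ forces $\ex(E_f^\bullet) = \Cone(\Id_{L_f})[-1] = 0$, so both $h^{-2}$ and $h^{-1}$ of the cone vanish, the denominator reduces to $1$, and $f_\vir^! = f^!$. For (4), each denominator factor has the form $\bigwedge^\bullet(y V^\bullet)$ for some perfect complex $V^\bullet$ on $X$; by multiplicativity of $\bigwedge^\bullet$ on short exact sequences and the line-bundle identity $\bigwedge^\bullet(yL) = 1 - yL$, every such factor specializes to $1$ as $y \to 0$, so $\lim_{y \to 0} f_\vir^! = f^!$.

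The one step that is not entirely formal is the claim in (2) that flat pullback of an obstruction theory is again an obstruction theory, because this genuinely relies on the flat base-change isomorphism $(u')^* L_f \cong L_{f'}$; without flatness, the cotangent complex does not commute with base change and the argument would have to be replaced by a more delicate comparison of intrinsic normal cones. Under the flatness hypothesis of the lemma, however, this step is routine, and so the main content of the proof is really just bookkeeping that the Euler-class denominator and the usual l.c.i. pullback are each functorial in the required sense.
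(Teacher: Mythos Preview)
Your proposal is correct and follows essentially the same approach as the paper's proof: factor $f_\vir^!$ as the classical l.c.i.\ pullback times an Euler-class correction, then invoke bivariance of $f^!$ and compatibility of $K$-theoretic Chern/Euler classes for (1), flat base change $L_{f'} \cong (u')^* L_f$ for (2), and trivial specialization of the denominator for (3) and (4). The paper states these same points in compressed form; your version simply unpacks the details.
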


\subsection{Compatibility}

Now we show the virtual l.c.i. pullback defined as above admits the expected functorial property. 
Suppose $f: X\to Y$ is as above, and there is a diagram
$$
\xymatrix{
X \ar[rr]^-f \ar[dr]_{\pi_X} && Y \ar[dl]^-{\pi_Y} \\
& \fM, &
}
$$
where $\fM$ is an algebraic stack, $\pi_X$ and $\pi_Y$ are of DM type. 

Let $\bC^*$ be a torus acting trivially on $X$, $Y$ and $\fM$. 
Let $E^\bullet_{X/\fM}$, $E^\bullet_{Y/\fM}$ and $E^\bullet_f$ be $\bC^*$-equivariant relative perfect obstruction theories, associated with the maps $\pi_X$, $\pi_Y$ and $f$, such that $(E^\bullet_{Y/\fM}, E^\bullet_{X/\fM} , E_f^\bullet)$ form a \emph{compatible triple}, i.e. they fit in diagram (\ref{triple}). 
Suppose that $E_f^\bullet$ satisfies Assumption \ref{Ass-invert}.

Consider the truncation distinguished triangle $\tau_{\leq -2} E_f^\bullet \to  E_f^\bullet \to \tau_{\geq -1} E_f^\bullet \to \tau_{\leq -2} E_f^\bullet [1]$. 
Since $L_f$ is of amplitude $[-1, 0]$, the obstruction theory $\phi: E_f^\bullet \to L_f$ naturally induces $\phi_{\geq -1} : \tau_{\geq -1} E_f^\bullet \to L_f$, which is a perfect obstruction theory. 

Consider the complexes
$$
F^\bullet := \mathrm{Cone} (E^\bullet_{X/\fM} \to \tau_{\geq -1} E_f^\bullet ) [-1], \qquad G^\bullet := \Cone ( E^\bullet_{X/\fM} \to L_f) [-1], 
$$
where the morphisms are the compositions $E_{X/\fM}^\bullet \to E_f^\bullet \to \tau_{\geq -1} E^\bullet_f$ and $E_{X/\fM}^\bullet \to E_f^\bullet \to L_f$.
Denote by $\cF := h^1/h^0 (F^{\bullet, \vee})$ and $\cG := h^1/h^0 (G^{\bullet, \vee})$ their associated vector bundle stacks
in the sense of \cite{BF}.

\begin{Lemma} \label{compare}
\begin{enumerate}[1)]
\setlength{\parskip}{1ex}

\item $F^\bullet$ is a perfect complex of amplitude $[-1, 0]$, and there is a short exact sequence of vector bundle stacks
$$
		\xymatrix{
		 \cF \ar[r] & f^* \cE_{Y/\fM} \ar[r] & h^2 (E^{\bullet, \vee} ) =  h^{2} (\ex (E^\bullet_f)^\vee). 
		}
$$
Moreover, the pullback intrinsic normal cone $f^*\cC_{Y/\fM}$ lies in the sub-vector bundle stack $\cF \subset f^* \cE_{Y/\fM}$. 

\item $G^\bullet$ is a perfect complex of amplitude $[-1, 0]$, and there are short exact sequences of vector bundle stacks
$$
\xymatrix{
h^{1} (\ex (E^\bullet_f)^\vee) \ar[r] & \cG \ar[r] & \cF,  & \cN_{X/Y} \ar[r] & \cE_{X/\fM} \ar[r] & \cG, 
}
$$
where $\cN_{X/Y} := h^1 / h^0 (L_f^\vee)$ is the normal bundle stack. 
\end{enumerate}
\end{Lemma}

\begin{proof}
By the pullback of the cone and vector bundle stack, we mean $f^* \cC_{Y/\fM} := X \times_Y \cC_{Y/\fM}$ and $f^*\cE_{Y/\fM} := X \times_Y \cE_{Y/\fM}$. 

1) follows from the following diagram, where all rows and columns are distinguished triangles \footnote{In general, the morphism between the 3rd object of two distinguished triangles obtained from the axiom of triangulated categories is not canonical. But in our case, the morphism $f^* E_{Y/\fM}^\bullet \to F^\bullet$ turns out to be unique by the vertical distinguished triangle.}:
	$$
	\xymatrix{
h^{-2} (E^\bullet_f) [1] \ar[r] \ar[d] & 0 \ar[d] \ar[r] & h^{-2} (E^\bullet_f)[2]  \ar@{=}[r] \ar[d] & h^{-2} (E^\bullet_f) [2]  \ar[d] \\ 
		f^* E_{Y/\fM}^\bullet \ar[r] \ar[d] & E_{X/\fM}^\bullet
		\ar[r] \ar@{=}[d] & E_f^\bullet \ar[r] \ar[d] &  f^* E_{Y/\fM}^\bullet [1] \ar[d] \\
		F^\bullet \ar[r] & E^\bullet_{X/\fM} \ar[r] & \tau_{\geq -1} E^\bullet_f \ar[r] & F^\bullet [1].  
	}
	$$
That $F^\bullet$ has amplitude $[-1,0]$ follows from the 3rd row. 
The exact sequence then follows from \cite[Proposition 2.7]{BF}. 
To see the last statement, consider the map from the 2nd row of the diagram to the distinguished triangle $f^* L_{Y/\fM} \to L_{X/\fM} \to L_f \to f^* L_{Y/\fM}[1]$. 
One can see that the map $h^{-2} (E^\bullet_f) \to h^{-1} (f^* L_{Y/\fM})$ vanishes as it factors through $h^{-2} (L_f) = 0$.
The last statement follows.

2) follows from the following diagram, where all rows and columns are distinguished triangles:
$$
	\xymatrix{
h^{-1} (\ex (E^\bullet_f)) \ar[r] \ar[d] & 0 \ar[d] \ar[r] & h^{-1} (\ex (E^\bullet_f))[1]  \ar@{=}[r] \ar[d] & h^{-1} (\ex (E^\bullet_f))[1]  \ar[d] \\ 
	F^\bullet \ar[r] \ar[d] & E_{X/\fM}^\bullet \ar[r] \ar@{=}[d] & \tau_{\geq -1} E_f^\bullet \ar[r] \ar[d] &  F^\bullet [1] \ar[d] \\
	G^\bullet \ar[r] & E^\bullet_{X/\fM} \ar[r] & L_f \ar[r] & G^\bullet [1].  
	}
	$$
The two exact sequences are the 1st column and 3rd row.
\end{proof}

\begin{Proposition} \label{functoriality}
In $K_0^T (X)_\loc$, we have $f^!_\vir \circ \pi_{Y, \vir}^! = \pi_{X, \vir}^!. $
\end{Proposition}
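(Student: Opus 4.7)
The plan is to reduce the identity to two ingredients: the excess intersection formula applied to the short exact sequence of vector bundle stacks from Lemma~\ref{compare}, and the classical Manolache--Qu functoriality applied to the perfect compatible triple $(F^\bullet, E^\bullet_{X/\fM}, L_f)$.

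First I would unpack the left-hand side using the base change property for the refined Gysin pullback, which yields
\begin{equation*}
f^! \pi^!_{Y,\vir}(\alpha) = 0^!_{f^* \fE_{Y/\fM}}\bigl(f^*\sigma_{\pi_Y}\alpha\bigr),
\end{equation*}
where $f^*\sigma_{\pi_Y}\alpha$ is a class supported on $f^*\fC_{Y/\fM}$. By the last statement of Lemma~\ref{compare}, $f^*\fC_{Y/\fM}$ lies inside $\fF$, so the standard excess intersection formula for the subbundle-stack inclusion $\fF \hookrightarrow f^*\fE_{Y/\fM}$ with quotient $\fN := h^2/h^1\bigl(\ex(E^\bullet_f)^\vee\bigr)$ gives
\begin{equation*}
0^!_{f^*\fE_{Y/\fM}}(\beta) = \lambda_{-1}\bigl((y\otimes\fN)^\vee\bigr) \cdot 0^!_\fF(\beta)
\end{equation*}
for classes $\beta$ supported on $\fF$, where everything is treated $\bC^*_y$-equivariantly with $y$ scaling the fibers of $\ex(E^\bullet_f)$. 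The equivariant Euler class on the right matches exactly the denominator in Definition~\ref{pullback} and cancels it, leaving
\begin{equation*}
f^!_\vir \pi^!_{Y,\vir}(\alpha) = 0^!_\fF\bigl(f^*\sigma_{\pi_Y}\alpha\bigr).
\end{equation*}

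The remaining identity $0^!_\fF\bigl(f^*\sigma_{\pi_Y}\alpha\bigr) = \pi^!_{X,\vir}(\alpha)$ is the classical Manolache--Qu functoriality applied to the compatible triple $(F^\bullet,\, E^\bullet_{X/\fM},\, L_f)$ arising from the third row of the diagram in the proof of Lemma~\ref{compare}. All three complexes are perfect of amplitude $[-1,0]$; the complex $L_f$ is the tautological perfect obstruction theory for the l.c.i.\ morphism $f$; and the triangle is part of a $3\times 3$ compatible diagram of obstruction theories. Although $F^\bullet$ is not literally pulled back from an obstruction theory on $Y$, the deformation-to-normal-cone argument of \cite{Man, Qu} needs only the cone compatibility between the vector bundle stacks $\fF$, $\fE_{X/\fM}$ and the relevant intrinsic normal cones, and this follows from the distinguished triangle together with the last sentence of Lemma~\ref{compare}.

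The main obstacle I anticipate is verifying this last cone compatibility precisely. In the classical setup one uses that the intrinsic normal cone $\fC_{X/\fM}$ fits in $\fE_{X/\fM}$ and maps compatibly to $f^*\fC_{Y/\fM}$ inside $f^*\fE_{Y/\fM}$, with fiberwise cone $\fC_f$. Here one needs the analogous statement with $\fF$ in place of $f^*\fE_{Y/\fM}$ and the tautological $L_f$-cone in place of $\fC_f$, so that the deformation argument matches the two computations. Given the perfect triple $(F^\bullet, E^\bullet_{X/\fM}, L_f)$ and the inclusion from Lemma~\ref{compare}, this amounts to a bookkeeping adaptation of Manolache's Construction~3.13 and the functoriality proof in \cite{Man, Qu}, after which combining the two steps yields the desired identity $f^!_\vir \pi^!_{Y,\vir} = \pi^!_{X,\vir}$.
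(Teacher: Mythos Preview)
Your proposal is correct and follows essentially the same route as the paper: both reduce $f^!\circ\pi_{Y,\vir}^!$ via base change and excess intersection along $\fF\hookrightarrow f^*\fE_{Y/\fM}$ to cancel the denominator, then identify the remaining $0_\fF^!$-expression with $\pi_{X,\vir}^!$. The only cosmetic difference is that where you invoke Manolache--Qu functoriality for the perfect triple $(F^\bullet,E^\bullet_{X/\fM},L_f)$ and flag the needed adaptation, the paper instead spells that adaptation out directly using the short exact sequences $\fN_{X/Y}\to\fC_{X/\fM}\to f^*\fC_{Y/\fM}$ and $\fN_{X/Y}\to\fE_{X/\fM}\to\fF$.
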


\begin{proof}
Consider the commutative diagram
$$
\xymatrix{
K_0 (\fM) \ar[r]^-{\sigma_{\pi_Y}} \ar@{=}[d] & K_0 (\cC_{Y/\fM}) \ar[r] \ar[d]^{f^!} & K_0 (\cE_{Y/\fM}) \ar[r] \ar[d]^{f^!} & K_0 (Y) \ar[d]^{f^!} \\
K_0 (\fM) \ar[r]^-\sigma & K_0 (f^* \cC_{Y/\fM}) \ar[r]^-{\iota_*} & K_0 (f^* \cE_{Y/\fM}) \ar[r]^-{0^!} & K_0 (X) ,
}
$$
where $\sigma$ is defined as $f^! \circ \sigma_{\pi_Y}$. 
The first row of the diagram defines $\pi_{Y, \vir}^!$, and hence the second row gives the composition $f^! \circ \pi_{Y, \vir}^!$. 

By Lemma \ref{compare} 1), the pullback intrinsic normal cone $f^* \cC_{Y/\fM}$ actually lies in the sub-vector bundle stack $\cF \subset f^* \cE_{Y/\fM}$.
By excess intersection, we can compare the last two arrows $0^! \circ \iota_*$ with $0^!_\cF \circ \iota_{\cF*}$, with $\iota_{\cF*}: f^* \cC_{Y/\fM} \hookrightarrow \cF$. 
It follows that
\begin{equation} \label{tempt}
f^! \circ \pi_{Y, \vir}^! = 0^! \circ \iota_* \circ \sigma =  \bigwedge^\bullet [y \otimes h^{-2} (E_f^\bullet) ] \cdot 0^!_\cF \circ \iota_{\cF*} \circ \sigma.
\end{equation}

Next, Lemma \ref{compare} 2) implies that $\cG$ is a $h^{1} (\ex (E^\bullet_f)^\vee)$-torsor over $\cF$, which is trivial due to Assumption \ref{Ass-invert}, and induces an embedding $\iota_{\cF, \cG}: \cF \hookrightarrow \cG$.
Consider the diagram (which commutes \emph{except at} the upper-right triangle)
$$
\xymatrix{
& & K_0 (\cF) \ar[d]^-{\iota_{\cF, \cG*}} \ar[dr]^-{0_\cF^!} \\
K_0 (\fM) \ar[r]^-\sigma \ar@{=}[d] & K_0 (f^* \cC_{Y/\fM}) \ar[ur]^-{\iota_{\cF*}} \ar[r]^-{\iota_{\cG*}} \ar[d] & K_0 (\cG) \ar[r]^-{ 0_\cG^!} \ar[d] & K_0 (X) \ar@{=}[d] \\
K_0 (\fM) \ar[r]^-{\sigma_{\pi_X}} & K_0 (\cC_{X/\fM} ) \ar[r] & K_0 (\cE_{X/\fM}) \ar[r]^{0_{\cE_{X/\fM}}^!} & K_0(X),
}
$$
where all vertical arrows except $\iota_{\cF, \cG}$ are flat pullbacks, and the last row defines the virtual pullback $\pi_{X, \vir}^!$. 
The 2nd square commutes by the short exact sequences of cone stacks $\cN_{X/Y} \to \cC_{X/\fM} \to f^* \cC_{Y/\fM}$ (see  \cite[Proposition 3.14]{BF}), and $\cN_{X/Y} \to \cE_{X/\fM} \to \cG$. 

At the upper-right triangle, we have $0_\cF^! = \bigwedge^\bullet [- y \otimes h^{-1} (\ex (E^\bullet_f))  ] \cdot 0_\cG^! \circ \iota_{\cF, \cG*}$, in the localized $K$-theory.
Comparing with (\ref{tempt}), we obtain
$$
f^! \circ \pi_{Y, \vir}^!  =  \bigwedge^\bullet [y \otimes \ex (E^\bullet_f)  ] \cdot \pi_{X, \vir}^!. 
$$
The proposition follows. 
\end{proof}

\subsection{Symmetric obstruction theory} \label{sec-sym-obs}

A special example that will be important for us is the following. 
Let $\fM$ be an algebraic stack, and let $X$ be quasi-projective DM stack, such that there is a \emph{smooth} projection $\pi_X: X \to \fM$. 
Let the torus $\bC^* = \bC^*_y$ act trivially on $\fM$ and $X$, and let $K_0^{\bC^*} (X)_\loc$ be as in the previous section.

We define a $\bC^*$-equivariant relative perfect obstruction theory for $\pi_X : X \to \fM$ as 
\begin{equation} \label{E-double}
	E^\bullet := y \otimes T_{X/\fM} [1] \oplus \Omega_{X/\fM}, 
\end{equation}
	with the natural map to $\Omega_{X/\fM}$. 

The intrinsic normal cone is $X$ itself, and the obstruction bundle is $\Omega_{X/\fM}$. 
Hence the virtual pullback $\pi_{X, \vir}^! : K_0^{\bC^*} (\fM) \to K_0^{\bC^*_y}(X)$ is
\begin{equation} \label{Ovir-sym}
\pi_{X, \vir}^! = \bigwedge^\bullet (y \otimes T_{X/\fM}) \cdot \pi_X^*. 
\end{equation}

The virtual canonical bundle $K_\vir := \det E^\bullet = y^{-\dim X/\fM} K_{X/\fM}^2$ admits a square root $K_\vir^{1/2} = y^{-(\dim X/\fM) /2} K_{X/\fM}$. One can then twist $\pi_{X, \vir}^! $ by $K_\vir^{1/2}$ as a modification, denoted by $\widehat \pi_{X, \vir}^! $.

The same can be generalized to l.c.i. morphisms. 
Let $X$ and $Y$ be two quasi-projective DM stacks, with smooth projections $\pi_X: X\to \fM$ and $\pi_Y: Y\to \fM$, equipped respectively with symmetric obstruction theories $E_{X/\fM}^\bullet$ and $E_{Y/\fM}^\bullet$ defined as above. 
Let $f: X \to Y$ be a map such that $\pi_X = \pi_Y\circ f$. 

By the smoothness of $\pi_X$ and $\pi_Y$, $f$ is automatically l.c.i..  
Consider the relative obstruction theory
\begin{equation} \label{rel-sym-f}
E_f^\bullet := y \otimes L_f^\vee [1] \oplus L_f ,
\end{equation}
which lies in degree $[-2,-1]$, and satisfies Assumption \ref{Ass-invert}. 
The obstruction theories form a $\bC^*_y$-equivariant compatible triple $f^* E_{Y/\fM}^\bullet \to E_{X/\fM}^\bullet \to E_f^\bullet \to f^* E_{Y/\fM}^\bullet [1]$. 

The excess complex $\ex (E_f^\bullet)$ now reduces to $y \otimes L_f^\vee [1]$. 
Since it carries a nontrivial $y$-weight, $\bigwedge^\bullet (y \otimes L_f^\vee)$ is invertible in the localized ring $K(X)_\loc$. 
The virtual l.c.i. pullback $f_\vir^!$ can then be defined as in Definition \ref{pullback}. 
Moreover, the square root $K_{f, \vir}^{1/2} = (\det E_f^\bullet)^{1/2}$ exists. 

\begin{Definition} \label{defn-v-lci-p}
The modified virtual l.c.i. pullback is defined as 
$$
\widehat f_{\vir}^! := K_{f, \vir}^{1/2} \otimes f_\vir^!: K_0^{\bC^*} (Y) \to K_0^{\bC^*} (X)_\loc . 
$$ 
\end{Definition} 

\begin{Remark}
\begin{enumerate}[1)]
\setlength{\parskip}{1ex}

\item The analogue for (\ref{Ovir-sym}) is
\begin{equation} \label{Ovir-sym-hat}
\widehat \pi_{X, \vir}^! = (-1)^{\dim X / \fM} \cdot \widehat a (y \otimes T_{X/\fM}) \cdot \pi_X^*, 
\end{equation}
where $\hat a$ is defined as $\widehat a (\sum_i x_i) := \prod_i (x_i^{1/2} - x_i^{-1/2} )$. 

Moreover, Lemma \ref{functoriality} still holds for the modified virtual pullback:
\begin{equation} \label{functoriality-hat}
\widehat f_\vir^! \circ \widehat \pi_{Y, \vir}^! = \widehat \pi_{X, \vir}^!. 
\end{equation}

\item More generally, given a Cartesian diagram
$$
\xymatrix{
X' \ar[r]^-{f'} \ar[d] & Y' \ar[d] \\
X \ar[r]^-f & Y, 
}
$$
where $X'$, $Y'$ are not necessarily smooth over $\fM$, and $f'$ is not necessarily l.c.i., $f^! : K(Y') \to K(X')$ is defined in the usual way. 
One can also define the similar virtual pullbacks $f_\vir^!$ and $\widehat f_\vir^!$, as in 2) of Definition \ref{pullback}.

\item The $K$-theoretic Euler class $\bigwedge^\bullet (y \otimes T_{X/\fM})$ is invertible.
Therefore, the embeddings
$$
\Psi , \, \widehat\Psi : K_0^{\bC^*} (X) \otimes \bC(y) \xrightarrow{\sim} K_0^{\bC^*} (X)_\loc, 
$$
$$
\Psi (\cF) := \cF \otimes \bigwedge^\bullet (y \otimes T_{X/\fM}), \qquad \widehat\Psi (\cF) :=  \cF \otimes (-1)^{\dim X / \fM} \cdot \widehat a (y \otimes T_{X/\fM})
$$
are isomorphisms.

In particular, when $\fM = \pt$, the structure sheaf $\cO_X$ is mapped to the virtual structure sheaf $\cO_{X, \vir}$ (resp. $\widehat\cO_{X, \vir}$) under $\Psi$ (resp. $\widehat\Psi$). 

\item When $\fM = \pt$, the diagonal map $\Delta : X \to X \times X$ is l.c.i. with the symmetric obstruction theory lying in the degree $[-2, -1]$. 
One can then define the \emph{virtual intersection product}
$$
\otimes_\vir , \, \widehat\otimes_\vir : K_0^{\bC^*} (X)_\loc \otimes K_0^{\bC^*} (X)_\loc \to K_0^{\bC^*} (X)_\loc , 
$$
$$
a \otimes_\vir b := \Delta^!_\vir ( a \boxtimes b) , \qquad a \, \widehat\otimes_\vir \, b := \widehat\Delta^!_\vir ( a \boxtimes b) .
$$
Since $\ex(E_\Delta^\bullet) = h^{-2} (E_\Delta^\bullet) = y \otimes T_X$, the maps $\Psi$ and $\widehat\Psi$ introduced above are isomorphism of rings, intertwining the usual and the virtual intersection products. 
\end{enumerate}

\end{Remark}

\vspace{2em}

\section{$K$-theoretic quantized virtual Coulomb branch} \label{sec-v-c-b}

In this section we would like to introduce a variant of the BFN construction of the quantized Coulomb branch, defined in \cite{BFN}. The idea is to introduce symmetric perfect obstruction theories, and apply our virtual intersection product. 

Throughout this paper, let $\sG$ be a connected reductive group, with $\pi_1(\sG)$ torsion-free.

\subsection{BFN construction}

In this subsection, we recall the BFN construction of the quantized Coulomb branch. 
Let $\cO := \bC [[z]]$, $\cK := \bC ((z))$, and $D:= \Spec \cO$, $D^* := \Spec \cK$ be the formal disk and punctured formal disk respectively. 

The affine Grassmannian $\Gr_\sG$ has the following moduli interpretation
$$
\Gr_\sG = \sG_\cK / \sG_\cO = \{ (P, \varphi) \mid P: \ \sG\text{-bundle on } D, \ \varphi: P|_{D^*} \xrightarrow{\sim} D^* \times \sG \} / \sG_\cO,  
$$
where $\sG_\cO$ acts as automorphisms on $P$. 
$\Gr_\sG$ is an infinite dimensional variety; more precisely, it is an ind-projective ind-scheme.

The connected components of $\Gr_\sG$ are indexed by $\pi_0(\Gr_\sG) \cong \pi_1 (\sG)$. 
A connected component of $\Gr_\sG$ can be further stratified into $\sG_\cO$-orbits, indexed by dominant cocharacters $\lambda$ of $\sG$. 

Let $\sN$ be a finite-dimensional representation of $\sG$, and denote by $\sN_\cO := P\times_\sG \sN$ the vector bundle associated with a given $\sG$-bundle $P$ over $D$. 
The idea of the BFN construction is to consider the following. 
\begin{Definition}[\cite{BFN}]
The moduli of triples is defined as
$$
\cR := \{ (P, \varphi, s) \mid (P, \varphi) \in \Gr_\sG, \ s \in \sN_\cO, \ \varphi (s) \text{ extends over } 0 \in D \} / \sG_\cO, 
$$
which is a closed subvariety in 
$$
\cT := \{ (P, \varphi, s) \mid (P, \varphi) \in \Gr_\sG, \ s \in \sN_\cO \} / \sG_\cO. 
$$
\end{Definition}
$\cT$ is a pro-vector bundle over $\Gr_\sG$, in which $\cR$ can be seen as a subspace. 
The classical $K$-theoretic Coulomb branch is then defined as the spectrum of the convolution algebra $K^{\sG_\cO} (\cR )$. 

Using the presentation $\cT \cong \sG_\cK \times_{\sG_\cO} \sN_\cO$, a point in $\cT$ can be written as $[g, s]$, with $g\in \sG_\cK$ and $s \in \sN_\cO$. 
There is an embedding $\Pi: \cT \hookrightarrow \Gr_\sG \times \sN_\cK$, $[g,s] \mapsto gs$. 
The space $\cR$ is then identified with $\Pi^{-1} (\Gr_\sG \times \sN_\cO)$. 
The group $\sG_\cK$ acts on $\cT$ by left multiplication in the presentation $\cT \cong \sG_\cK \times_{\sG_\cO} \sN_\cO$. 
The subgroup $\sG_\cO$ preserves the subspace $\cR$. 

The affine Grassmannian $\Gr_\sG$, as well as $\cT$ and $\cR$, admits an approximation by finite-type subspaces, and one can define their $K$-groups as the limits of those. 
Recall that 
$$
\Gr_\sG = \bigsqcup_{\lambda \in \cochar_+ (\sG)} \Gr_\sG^\lambda, \qquad \cT_{\leq \lambda} := \pi^{-1} ( \overline{\Gr_\sG^\lambda} ), 
$$
and similarly we define $\cR_{\leq \lambda}$. 
Then $\overline{\Gr_\sG^\lambda}$ is of finite type, and $\Gr_\sG = \bigcup_\lambda \overline{\Gr_\sG^\lambda}$. 
But $\cT_{\leq\lambda}$ and $\cR_{\leq\lambda}$ are still of infinite type. 

Let $m > 0$ be an integer. Consider
$$
\cT^m := \sG_\cK \times_{\sG_\cO} ( \sN_\cO / z^m \sN_\cO ). 
$$
Then $\cT$ is the limit of the inverse system given by the affine fibrations $\cT^m \to \cT^l$, for $m>l$. 
Let $\pi^m: \cT^m \to \Gr_\sG$ be the projection. 
Then 
$$
\cT^m_{\leq \lambda} := (\pi^m)^{-1} (\overline {\Gr_\sG^\lambda} ) 
$$
is of finite type, and $\cT^m = \bigcup_\lambda \cT^m_{\leq \lambda}$. 

Moreover, the action of $\sG_\cO$ on $\sN_\cO / z^m \sN_\cO$ factors through the group $\sG_{\cO/z^m \cO}$, and we have
$$
\cT^m = \sG_{\cK / z^m \cO} \times_{\sG_{\cO / z^m \cO}} (\sN_\cO / z^m \sN_\cO). 
$$
One can define the similar approximation for $\cR$. 
Moreover, when $m \gg 0$ is sufficiently large, $\cR_{\leq \lambda} = \lim_\leftarrow \cR_{\leq \lambda}^m$ is invariant under the translation by $z^m \sN_\cO$, and the projection $\cR^m_{\leq\lambda} \to \cR^l_{\leq \lambda}$ for each $\lambda$, $m>l$ is an affine fibration. 
The action of $\sG_\cO$ on $\cR^m_{\leq \lambda}$ factors through $\sG_{\cO / z^i \cO}$ for sufficiently large $i$ (which depends on $\lambda$). 

\begin{Definition}[\cite{BFN}] 
The $\sG_\cO$-equivariant $K$-group of $\cR_{\leq \lambda}$ and $\cR$ are defined as
$$
K_0^{\sG_\cO} (\cR_{\leq \lambda}) := K_{0}^{\sG_{\cO / z^i \cO}} ( \cR_{\leq \lambda}^m ), \qquad K_0^{\sG_\cO } (\cR) := \lim_\lambda K_0^{G_\cO } (\cR_{\leq \lambda}) , 
$$
where the limit is with respect to the push-forward $K_0^{G_\cO} (\cR_{\leq \lambda}) \to K_0^{G_\cO} (\cR_{\leq \mu})$, induced by the embedding $\cR_{\leq \lambda} \hookrightarrow \cR_{\leq\mu}$ for $\lambda \leq \mu$. 
\end{Definition}

The definition is independent of the choice of $i$ and $m$, and hence is well-defined. 

\begin{Remark}
Let $\bC^*_q$ be the 1-dimensional torus, scaling the formal disk $D$. One can also consider the quantized version, i.e., the $(\sG_\cO \rtimes \bC^*_q)$-equivariant $K$-theory. 
Moreover, if $\sG$ embeds into a larger group $\tilde \sG$, with exact sequence $1\to \sG \to \tilde \sG \to \sT_F\to 1$, then one can replace the group $\sG$ with $\tilde \sG$. 
\end{Remark}

Now let us recall the construction of the convolution product on $K_0^{\sG_\cO \rtimes \bC^*_q} (\cR)$. Recall the following diagram, 
\begin{equation} \label{3-row-diagram}
\xymatrix{
\cR \times \cR \ar[d] & p^{-1} (\cR \times \cR) \ar[l] \ar[d] \ar[r] & q (p^{-1} (\cR \times \cR)) \ar[d] \ar[r] & \cR \ar[d] \\
\cT \times \cR \ar[d] & \sG_\cK \times \cR \ar[l] \ar[r] \ar[d] & \sG_\cK \times_{\sG_\cO} \cR \ar[r] \ar[d] & \cT \ar[d] \\
\cT \times \Gr_\sG \times \sN_\cO  & \sG_\cK \times \Gr_\sG \times \sN_\cO \ar[l]_-{p} \ar[r]^-q &  \sG_\cK \times_{\sG_\cO} (\Gr_\sG \times \sN_\cO) \ar[r]^-m & \Gr_\sG \times \sN_\cK , 
}
\end{equation}
where the maps on the second and third row are given by 
$$
\xymatrix{
( [g_1, g_2 s], [g_2, s] ) \ar@{|->}[d] & ( g_1, [g_2, s] ) \ar@{|->}[l] \ar@{|->}[r] \ar@{|->}[d] & [g_1, [g_2, s]] \ar@{|->}[r] \ar@{|->}[d] & [g_1 g_2,  s] \ar@{|->}[d] \\
([g_1, g_2 s], [g_2], g_2 s) & (g_1, [g_2], g_2 s) \ar@{|->}[l]_-p \ar@{|->}[r]^-q & [g_1, ([g_2], g_2 s) ] \ar@{|->}[r]^-m & ([g_1 g_2], g_1 g_2 s) . 
}
$$
All squares are Cartesian. Given $a, b\in K_0^{\sG_\cO \rtimes \bC^*_q} (\cR)$, the convolution product is defined as
$$
a * b := m_* (q^*)^{-1} p^! ( a \boxtimes b) . 
$$
The map $p$ factorizes as
\begin{equation} \label{factorization}
\xymatrix{
\sG_\cK \times \Gr_\sG \times \sN_\cO \ar[r]^-\Delta & \sG_\cK \times \sN_\cO \times \Gr_\sG \times \sN_\cO \ar[r]^-{p'_\cT } & \cT \times \Gr_\sG \times \sN_\cO,
}
\end{equation}
where $\Delta$ is the diagonal map of $\sN_\cO$, and $p'_\cT$ is a $\sG_\cO$-fibration. In other words, $p$ is l.c.i., and 
$$
p^! = \Delta^! \circ (p'_\cT)^*
$$ 
is the l.c.i. pullback, where $\Delta^!$ is the refined Gysin pullback. 
All maps here are defined first for the finite models, checked to be compatible with connecting morphisms, and hence well-defined for the limits.

\subsection{Virtual BFN construction} \label{sec-virtual-BFN}

We first introduce a perfect obstruction theory on $\cT$. 
As in Section \ref{sec-sym-obs}, let $\bC^*_\hbar $ be the 1-dimensional torus, with $\hbar \in K_{\bC^*_\hbar} (\pt)$. 

As our virtual intersection takes value only in a localized $K$-group, let us first make the relevant notions precise. 
Let $\sK \subset \sG$ be a maximal torus.
Let $n = \dim \sN$, and $\{ \chi_i, 1\leq i\leq n \}$ be weights of $\sN$. 
Let $\Phi$ be the set of roots of $\sG$. 

$K_0^{\sG_\cO \rtimes \bC^*_q \times \bC^*_\hbar} (\cT)$ is a $K_{\sG \times \bC^*_q \times \bC^*_\hbar}(\pt)$-module. 
We denote the characters corresponding to generalized weights $\chi_i$ and $\alpha$'s by $s^{\chi_i}$ and $s^\alpha$, viewed as elements in $K_\sG(\pt)$. 

\begin{Definition} \label{loc}
Consider the localized coefficient ring
$$
K_{\sG\times \bC^*_\hbar \times \bC^*_q} (\pt)_\sN := K_{\sG \times \bC_\hbar^* \times \bC^*_q} (\pt) \Big[\frac{1}{1 - q^\bZ \hbar s^{\chi_i}}, 1\leq i\leq n ; \frac{1}{1 - q^\bZ s^\alpha}, \alpha\in \Phi \Big].
$$
We define 
$$
K_0^{\sG_\cO \rtimes \bC^*_q \times \bC^*_\hbar} (\cT)_\sN 
:= 
K_{\sG \times \bC^*_\hbar \times \bC^*_q} (\pt)_\sN  \otimes_{ K_{\sG \times \bC^*_\hbar \times \bC^*_q} (\pt) } K_0^{\sG_\cO \rtimes \bC^*_q \times \bC^*_\hbar} (\cT)  . 
$$
$K_0^{\sG_\cO \rtimes \bC^*_q \times \bC^*_\hbar} (\cR)_\sN$ can be defined in the same way. 
\end{Definition}

Fix $\lambda \in \cochar(\sG)$, and $m\gg 0$. 
Recall the finite model $\cT^m_{\leq \lambda} :=(\pi^m)^{-1} ( \overline{\Gr_\sG^\lambda} )$, where $\pi^m: \cT^m \to \Gr_\sG$ is the projection. 
A point $(P, \varphi, s)$ in $\cT^m_{\leq \lambda}$ is given by $(P, \varphi) \in \overline{\Gr_\sG^\lambda}$, and $s \in P \times_\sG (\sN_\cO / z^m \sN_\cO)$.  

The deformation theory of $\cT^m_{\leq \lambda}$ over $\overline{\Gr_\sG^\lambda}$ is given by the deformation of the section $s$. 
Therefore, $\cT^m_{\leq \lambda}$ is smooth over $\overline{\Gr_\sG^\lambda}$, and the relative cotangent complex is a vector bundle $\Omega_{\pi^m}$. 
As in Section \ref{sec-sym-obs}, we introduce the symmetric perfect obstruction theory 
$$
E^\bullet_{\pi^m} := \hbar \otimes \Omega_{\pi^m}^\vee [1] \oplus \Omega_{\pi^m} . 
$$
After twisting by $K_\vir^{1/2}$, one can define a virtual pullback $\widehat{(\pi^m)}^!_\vir : K_0^{\sG_\cO \rtimes \bC^*_q \times \bC^*_\hbar} ( \overline{\Gr_\sG^\lambda} ) \to K_0^{\sG_\cO \rtimes \bC^*_q \times \bC^*_\hbar} (\cT^m_{\leq \lambda})_\sN$. 
Note that this is the usual virtual pullback, in the sense of \cite{Qu}. 

Given $m > l \gg 0$, let $p^m_l: \cT^m_{\leq \lambda}\to \cT^l_{\leq \lambda}$ be the affine fibration, which is smooth, and admits the relative perfect obstruction theory $E^\bullet_{p^m_l}:= \hbar \otimes \Omega_{p^m_l}^\vee[1] \oplus \Omega_{p^m_l}$. 
The symmetric obstruction theories $(E^\bullet_{\pi^l}, E^\bullet_{\pi^m}, E^\bullet_{p^m_l})$ then form a compatible triple, and by \cite[Proposition 2.11]{Qu} we have the compatibility of virtual pullbacks $\widehat{(\pi^m)}^!_\vir = \widehat{(p^m_l)}^!_\vir \circ \widehat{(\pi^l)}^!_\vir$. 
Moreover, the virtual pullback commutes with proper pushforwards. 
Therefore, the definition of the virtual pullback makes sense for the limit space:
$$
\widehat\pi_\vir^! : K_0^{\sG_\cO \rtimes \bC^*_q \times \bC^*_\hbar} ( \Gr_\sG ) \to K_0^{\sG_\cO \rtimes \bC^*_q \times \bC^*_\hbar} (\cT)_\sN . 
$$

Now let us define the convolution product. 
Consider the 3rd row of the convolution diagram (\ref{3-row-diagram}):
$$
\xymatrix{
\cT \times \Gr_\sG \times \sN_\cO  & \sG_\cK \times \Gr_\sG \times \sN_\cO \ar[l]_-{p} \ar[r]^-q &  \sG_\cK \times_{\sG_\cO} (\Gr_\sG \times \sN_\cO) \ar[r]^-m & \Gr_\sG \times \sN_\cK. 
}
$$
Recall that $q$ is a $\sG_\cO$-fibration, and $p$ is an l.c.i. morphism, which factorizes as in (\ref{factorization}):
$$
\xymatrix{
\sG_\cK \times \Gr_\sG \times \sN_\cO \ar[r]^-\Delta & \sG_\cK \times \sN_\cO \times \Gr_\sG \times \sN_\cO \ar[r]^-{p'_\cT } & \cT \times \Gr_\sG \times \sN_\cO.
}
$$
Each of the three spaces here admits a smooth projection to $\Gr_\sG \times \Gr_\sG$, and hence has a relative symmetric perfect obstruction theory as in Section \ref{sec-sym-obs}, where the base is taken as $\Gr_\sG \times \Gr_\sG$. 
The maps $\Delta$ and $p'_\cT$ then admit symmetric obstruction theories of the form (\ref{rel-sym-f}). 

Therefore, by constructions in Section \ref{sec-sym-obs}, one can define the virtual pullback $\widehat q^!_\vir$, which is invertible, and the virtual l.c.i. pullback $\widehat p^!_\vir$. 
Note that for $\widehat p^!_\vir$ we need to use Definition \ref{defn-v-lci-p}.

Finally, by the bivariance of the virtual pullback and virtual l.c.i. pullback, the definition extends over the limit, and we have: 

\begin{Definition}

The \emph{$K$-theoretic quantized virtual Coulomb branch} $\cA (\sG, \sN)$ is defined as the localized $K$-group $K_0^{\sG_\cO \rtimes \bC^*_q \times \bC^*_\hbar} (\cR)_\sN$, equipped with a \emph{virtual convolution product} $\widehat{*}_\vir$, where for $a, b \in K_0^{\sG_\cO \rtimes \bC^*_q \times \bC^*_\hbar } (\cR)_\sN$, 
$$
a \, \widehat{*}_\vir \, b \ := \ m_* (\widehat q^!_\vir)^{-1} \widehat p_\vir^! ( a \boxtimes b) . 
$$
Here $m, p, q$ are the maps in diagram (\ref{3-row-diagram}), and the parameter $y$ in Section \ref{sec-sym-obs} is taken as $y = q^{-1} \hbar$. 
\end{Definition}

For simplicity, if there's no risk of confusion, we will write the usual multiplication $\cdot$ to denote the virtual convolution product $\widehat{*}_\vir$, and we will simply refer to the virtual $K$-theoretic quantized Coulomb branch as the \emph{virtual Coulomb branch}. 

We prove some properties of the virtual Coulomb branch. 

\begin{Lemma}
\begin{enumerate}[1)]
\setlength{\parskip}{1ex}

\item For $a, b\in K_0^{\sG_\cO \rtimes \bC^*_q \times \bC^*_\hbar} (\cR)$ in the non-localized $K$-group, the virtual convolution product without twisting by $K_\vir^{1/2}$ (i.e. defined by $p_\vir^!$, $q_\vir^!$ instead of $\widehat p_\vir^!$, $\widehat q_\vir^!$) approaches to BFN's convolution product as $\hbar \to 0$. 

\item Let $[1] \in \Gr_\sG$ be the identity component of the affine Grassmannian.  $\widehat\pi_\vir^! \cO_{[1]} \in K_0^{\sG_\cO \rtimes \bC^*_q \times \bC^*_\hbar} (\cR)_\sN$, which we call the virtual structure sheaf of the component $\pi^{-1} [1]$ in $\cR$ over $[1]$, is the identity element of $\cA(\sG, \sN)$. 

\item $\cA(\sG, \sN)$ is associative. 
The convolution product is $K_{\sG \times \bC^*_\hbar \times \bC^*_q} (\pt)_\sN$-linear in the first variable. 

\item The $q\to 1$ limit of the convolution product is commutative. 

\item $\cM(\sG, \sN) := K_0^{\sG_\cO \rtimes \bC^*_q \times \bC^*_\hbar} (\cT)_\sN$ admits the structure of a right $\cA(\sG, \sN)$-module. 

\end{enumerate}
\end{Lemma}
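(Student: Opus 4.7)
The plan is to prove each of the five statements by combining the functoriality of virtual (l.c.i.) pullbacks established in Section \ref{sec-v-i-t} with the corresponding non-virtual arguments of \cite{BFN}.

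For (1), I observe that for any $K$-theory class $V$ one has $\bigwedge^\bullet(\hbar \otimes V) \to 1$ as $\hbar \to 0$. Consequently the un-modified virtual pullbacks $p^!_\vir$ and $q^!_\vir$ specialize to the usual $p^!$ and $q^!$ at $\hbar = 0$, and the un-modified virtual convolution product reduces to BFN's convolution in this limit. For (2), one restricts the convolution diagram to the locus where the second factor lies over $[1] \in \Gr_\sG$. On this locus $m$ is an isomorphism onto its image and $p$, $q$ become $\sG_\cO$-fibrations whose excess bundles in $\widehat{p}^!_\vir$ and $\widehat{q}^!_\vir$ cancel; the compatibility (\ref{functoriality-hat}) then forces $a \,\widehat{*}_\vir\, \widehat\pi^!_\vir \cO_{[1]} = a$, and a symmetric argument handles left multiplication.

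For (3), I set up the three-fold convolution diagram. Its intermediate stacks all admit smooth projections to appropriate powers of $\Gr_\sG$ and hence carry relative symmetric obstruction theories of the form (\ref{rel-sym-f}). Both orderings $(a \,\widehat{*}_\vir\, b) \,\widehat{*}_\vir\, c$ and $a \,\widehat{*}_\vir\, (b \,\widehat{*}_\vir\, c)$ compute the same pull-push operator along a common composite map; by (\ref{functoriality-hat}) together with the bivariance of the modified virtual l.c.i. pullback stated after Definition \ref{pullback}, the two orderings agree. The $K_{\sG \times \bC^*_\hbar \times \bC^*_q}(\pt)_\sN$-linearity in the first argument is the projection formula applied to $m_*$ combined with $K_{\sG \times \bC^*_\hbar \times \bC^*_q}(\pt)_\sN$-linearity of $\widehat{p}^!_\vir$ and $(\widehat{q}^!_\vir)^{-1}$.

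For (4), I follow the BFN commutativity argument: at $q = 1$ the $\bC^*_q$-scaling of the formal disk is trivialized and there is a flip isomorphism interchanging the two factors in the convolution diagram. Because the symmetric obstruction theories and their excess bundles are manifestly symmetric under this exchange, the flip intertwines the virtual pull-push operations and identifies $a *_\vir b$ with $b *_\vir a$. For (5), the right $\cA(\sG, \sN)$-module structure on $\cM(\sG, \sN)$ is defined by the convolution diagram with $\cT$ replacing $\cR$ in the first factor; all relevant symmetric obstruction theories were constructed relative to $\Gr_\sG$, which is still the base for $\cT$, so the definitions of $\widehat{p}^!_\vir$ and $\widehat{q}^!_\vir$ extend verbatim, and associativity of the module structure is proved by the same argument as in (3).

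The main obstacle is (3): one must check that the relative symmetric obstruction theories on each of the stacks arising in the three-fold convolution diagram fit into the compatible triples required so that Proposition \ref{functoriality} and its modified form apply at each step, and that the various excess-bundle contributions cancel identically on both sides to yield a single well-defined triple convolution operator.
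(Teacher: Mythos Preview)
Your proposal is correct and follows essentially the same route as the paper's proof, which is itself very brief and simply points to the arguments in \cite[Theorem~3.10]{BFN} for (3)--(4), to the obstruction-factor cancellation for (2), and to the second row of diagram~(\ref{3-row-diagram}) for (5). One minor sharpening for (2): the precise cancellation is between the obstruction factor $(-1)^{\dim\cR}\,\widehat a(\hbar\otimes T_{\cR})$ carried by the virtual structure sheaf $\widehat\pi^!_\vir\cO_{[1]}$ and the excess contribution $h^{-2}(E^\bullet_\Delta)$ of the diagonal factor of $p$ in (\ref{factorization}), rather than a cancellation between the excess bundles of $\widehat p^!_\vir$ and $\widehat q^!_\vir$ themselves.
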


\begin{proof}
As $\hbar\to 0$, all the virtual pullbacks in the definition of the convolution product approach to the usual l.c.i. pullbacks. Hence 1) follows. 

2) follows from the observation that over the identity component, the obstruction factor $(-1)^{\dim \cR} \cdot \hat a (\hbar \otimes T_{\cR})$ contributed from $\pi^{-1}(1)$ coincides with the one contributed from $h^{-2} (E_\Delta^\bullet)$, with $\Delta$ in (\ref{factorization}). 

3) and 4) follow from the same argument as in \cite[Theorem 3.10]{BFN}. 
They may also be proved via the abelianization.
For 5), it suffices to consider the 2nd row of diagram (\ref{3-row-diagram}), and the same proof for associativity verifies the right module strucure. 
\end{proof}

\vspace{2em}

\section{Virtual Coulomb branch: abelian case} \label{sec-ab}

In this section, we study the abelian case in details. 
As in \cite[Section 4]{BFN}, explicit computations and presentations are available in this situation. 
We first fix some notations.

Let $k\leq n$ be nonnegative integers. 
Throughout this section, let $\sG = \sK \cong (\bC^*)^k$ be a torus of rank $k$, and $\sN \cong \bC^n$ be a $\sK$-representation, with weights $\chi_i \in \bZ^k$, $1\leq i\leq n$. 
In other words, we have the weight decomposition
$$
\sN = \bigoplus_{i=1}^n \bC_{\chi_i},
$$
Let $\sT = (\bC^*)^n$ be the standard $n$-dimensional torus acting on $\sN$. 


In the following, we regard $\sK$ as the \emph{gauge group}, and $\sT$ as the \emph{flavor group}. 
Let 
$$
a_1, \cdots, a_n \in K_\sT (\pt), \qquad s_1, \cdots, s_k \in K_\sK (\pt)
$$ 
be the equivariant parameters associated with the standard characters of $\sT$ and $\sK$. Although defined as equivariant parameters, $s_j$'s  will often be refered to as (exponentiated) ``Chern roots".

We also introduce the following notation:
\begin{equation} \label{x-s-rel}
x_i = a_i \prod_{j=1}^k s_j^{\langle \chi_i, e_j \rangle} = a_i s^{\chi_i}, \qquad 1\leq i\leq n, 
\end{equation}
where $\{ e_j \}$ is the standard basis of $\bZ^k$. 
Geometrically they are characters, or (exponentiated) Chern roots of the 1-dimensional representations $\bC_{\chi_i}$, under the action of $\sK \times \sT$.

Let us describe the localized coefficient ring introduced in Definition \ref{loc}.
Denote by $a^{\pm 1}$ the collection of equivariant parameters $\{a_i^{\pm 1} \mid 1\leq i\leq n\}$, and $s^{\pm 1}$ the collection $\{s_j^{\pm 1} \mid 1\leq j\leq k\}$. 
Then
$$
K_{\sK \times \sT \times \bC^*_\hbar \times \bC^*_q} (\pt)_\sN := \bC[a^{\pm 1}, s^{\pm 1}, q^{\pm 1}, \hbar^{\pm 1}] \Big[\frac{1}{1 - q^\bZ \hbar x_i}, 1\leq i\leq n \Big].
$$

\subsection{Explicit presentation}

The affine Grassmannian $\Gr_\sK$, with the reduced scheme structure, is identified with the cocharacter lattice $\pi_1 (\sK) = \bZ^k$. 
One can write $\Gr_\sK = \{[z^d] \mid d = (d_1, \cdots, d_k) \in \bZ^k\}$, where $z^d = \diag \{z^{d_1}, \cdots, z^{d_k} \}$. 
Then
$$
\cT = \bigsqcup_{d\in \cochar(\sK)} [z^d] \times z^{d} \sN_\cO , 
\qquad 
\cR = \bigsqcup_{d\in \cochar(\sK)} [z^d] \times \left( z^d \sN_\cO \cap \sN_\cO \right) . 
$$
Unlike the general case, when $\sG = \sK$ is abelian, the space $\cR$ is \emph{smooth over $\Gr_\sK$}. 

Let $\cA_\sT(\sK, \sN)$ be the $\sT$-equivariant virtual Coulomb branch. 
More precisely, this means that we consider the equivariant theory $\cA_\sT (\sK, \sN) = K_{\sK_\cO \rtimes \bC^*_q \times \sT \times \bC^*_\hbar} (\cR)_\sN$. 
As in \cite{BFN}, denote by 
$$
r_d \in \cA_\sT (\sK, \sN)
$$ 
the \emph{virtual} structure sheaf of the connected component $\pi^{-1} [z^d] \subset \cR$ over $[z^d]$. 

For any $C, D\in \bZ$, define the notation
$$
\delta (C, D) := \left\{ \begin{aligned}
& 0, \qquad && C, D \text{ have the same sign,} \\
& \min\{ |C|, |D| \}, \qquad && C, D \text{ have opposite signs.}
\end{aligned} \right. 
$$
and the \emph{sign function} 
$$
\epsilon (C) := \left\{ \begin{aligned}
& 1, \qquad && C>0 \\
& 0, \qquad && C=0 \\
& -1, \qquad && C<0. 
\end{aligned} \right. 
$$
For the definition of the Pochhammer symbols, we refer the readers to Appendix \ref{P-symbol}. 

\begin{Proposition} \label{thm-r_d}
\begin{enumerate}[1)]
\setlength{\parskip}{1ex}

\item For any $d\in \cochar(\sK)$, we have 
\begin{equation} \label{ab-rel}
r_{c} \cdot r_{d} = \prod_{i=1}^n \left[ (-q^{1/2} \hbar^{-1/2})^{\epsilon (C_i) \delta_i} \cdot  \frac{(q^{-C_i} \hbar x_i)_{\epsilon (C_i) \delta_i} }{(q^{-C_i} q x_i)_{\epsilon (C_i) \delta_i} }   \right]^{-\epsilon (C_i)}  \cdot r_{c+d}, 
\end{equation}
where $C_i := \langle \chi_i, c \rangle$, $D_i := \langle \chi_i, d\rangle$, and $\delta_i := \delta(C_i, D_i)$.
In particulcar, we have
\begin{equation} \label{ab-rel-special}
 r_{-d} \cdot r_{d} =  \prod_{i: D_i >0} \left[ (- q^{1/2} \hbar^{-1/2} )^{D_i} \frac{(\hbar x_i)_{ D_i } }{(q x_i)_{D_i } }   \right]^{-1}   \cdot \prod_{ i: D_i <0} \left[ (- q^{1/2} \hbar^{-1/2} )^{D_i} \frac{(\hbar x_i)_{ D_i } }{(q x_i)_{D_i } } \right] . 
\end{equation}

\item The $K$-theoretic quantized virtual Coulomb branch $\cA_\sT (\sK, \sN)$ is generated by $\{ r_d \mid d\in \cochar(\sK) \}$ over $K_{\sK \times \sT \times \bC^*_\hbar \times \bC^*_q} (\pt)_\sN$, up to the relations (\ref{ab-rel}). 

\end{enumerate}
\end{Proposition}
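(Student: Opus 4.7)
My plan is to first establish the multiplicative formula (\ref{ab-rel}) by direct computation in the convolution diagram, and then assemble the presentation in part 2) from the formula together with a module decomposition of $\cA_\sT(\sK, \sN)$.

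Since $\sK$ is abelian, the reduced affine Grassmannian $\Gr_\sK = \cochar(\sK)$ is discrete and $\cR = \bigsqcup_d \cR_d$ is smooth over it, so the convolution diagram (\ref{3-row-diagram}) decomposes into components indexed by pairs $(c, d) \in \cochar(\sK)^2$, each mapping to $\cR_{c+d}$. Under the weight decomposition $\sN = \bigoplus_i \bC_{\chi_i}$, every space in the diagram and every compatible triple of symmetric obstruction theories factors as a product over $i$, so the virtual pullbacks $\widehat p^!_\vir$, $\widehat q^!_\vir$ and the push-forward $m_*$ are all multiplicative over weight directions. This reduces (\ref{ab-rel}) to the rank-one case $\sK = \bC^*$, $\sN = \bC_\chi$, with pairings $C = \la \chi, c\ra$ and $D = \la \chi, d \ra$, in which $\cR_d = z^{\max(0, -D)}\bC[[z]] \subset \bC[[z]] = \sN_\cO$.

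In this rank-one setting, tracing through (\ref{3-row-diagram}) shows that $q(p^{-1}(\cR_c \times \cR_d))$ maps under $m$ onto the subspace $\cR_d \cap \cR_{c+d}$ of $\cR_{c+d}$. When $C, D$ have the same sign, this subspace equals $\cR_{c+d}$, the map $m$ is an isomorphism, all virtual contributions cancel, and $r_c \cdot r_d = r_{c+d}$, in agreement with $\delta = 0$. When $C, D$ have opposite signs, $m$ is a closed embedding of codimension $\delta = \min(|C|, |D|)$ whose normal bundle decomposes into $\sK_\cO \rtimes \bC^*_q$-eigenspaces with weights $q^j x$ for $j$ running through a length-$\delta$ arithmetic progression pinned down by $C$; hence $m_*$ produces a Pochhammer factor in $qx$. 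The virtual l.c.i. pullback $\widehat p^!_\vir$ at the factorization $p = p'_\cT \circ \Delta$ in (\ref{factorization}) contributes the dual $\hbar$-twisted Pochhammer factor in $\hbar x$, coming from the excess complex $\ex(E_\Delta^\bullet)$ weighted by $y = q^{-1}\hbar$, while the contributions of the $\sG_\cO$-fibration $p'_\cT$ and of $\widehat q^{-1}_\vir$ cancel up to the square-root twists $K_\vir^{1/2}$. Those square-root twists in turn yield the half-integer prefactor $(-q^{1/2}\hbar^{-1/2})^{\epsilon(C)\delta}$ and the overall sign $-\epsilon(C)$ on the exponent, giving (\ref{ab-rel}); setting $c + d = 0$ so that $\delta_i = |D_i|$ specializes to (\ref{ab-rel-special}).

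For part 2), the smoothness of $\cR$ over the discrete base $\Gr_\sK$ gives the decomposition
\[
\cA_\sT(\sK, \sN) = \bigoplus_{d\in \cochar(\sK)} K_{\sK_\cO \rtimes \bC^*_q \times \sT \times \bC^*_\hbar}(\cR_d)_\sN,
\]
and by (\ref{Ovir-sym-hat}) each summand is a free rank-one module over the coefficient ring $K_{\sK \times \sT \times \bC^*_\hbar \times \bC^*_q}(\pt)_\sN$ generated by $r_d$, since the obstruction twist $\widehat a(\hbar \otimes T_{\cR_d/[z^d]})$ is invertible after the localization in Definition \ref{loc}. Thus $\{r_d\}_{d\in \cochar(\sK)}$ is a coefficient-ring basis of $\cA_\sT(\sK, \sN)$ and the multiplicative relations (\ref{ab-rel}) exhaust all relations, yielding the stated presentation. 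The main obstacle lies in the third paragraph: carefully identifying the $\bC^*_q$-weights on the normal bundle of the excess locus and on $\ex(E_\Delta^\bullet)$, which encode the integer shift $q^{-C_i}$ inside the Pochhammer symbols (reflecting the reference $z$-degree at which the excess sits), and tracking the square-root twists so that the sign $\epsilon(C)$ and the half-integer prefactor come out correctly in each sign pattern.
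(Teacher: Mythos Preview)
Your overall strategy coincides with the paper's: work component-by-component over the discrete $\Gr_\sK$, factor over weight summands to reduce to rank one, do a sign-pattern case analysis, and deduce 2) from the free rank-one module decomposition.

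Where you diverge is in the accounting of the rank-one computation in your third paragraph. You attribute the $(qx)$-Pochhammer to the normal bundle of $m$ and the $(\hbar x)$-Pochhammer to $\ex(E_\Delta^\bullet)$, with $p'_\cT$ and $\widehat q_\vir^{-1}$ cancelling. But $r_c,r_d,r_{c+d}$ are themselves \emph{virtual} structure sheaves, each already carrying an obstruction factor $\widehat a(\hbar\otimes T_{\cR_\bullet/\Gr})$; the coefficient in front of $r_{c+d}$ is the net comparison of all of these twists together with the excess from $\Delta^!$ and the codimension under $m_*$, and it does not separate cleanly into ``$m_*$ gives numerator, $\Delta$ gives denominator''. (Indeed, $\ex(E_\Delta^\bullet)$ alone is the full $q^{-1}\hbar\otimes\sN_\cO$, which is infinite-rank; the finite answer only emerges after cancellation with the obstruction twists on the $r$'s.) The paper instead bundles everything into a single finite-rank virtual class
\[
M_\vir \;=\; -\,z^c\sN_\cO\cap\sN_\cO \;-\; z^{c+d}\sN_\cO\cap z^c\sN_\cO \;+\; z^c\sN_\cO \;+\; z^{c+d}\sN_\cO\cap\sN_\cO,
\]
so that $r_c\cdot r_d=\dfrac{\bigwedge^\bullet M_\vir^\vee}{\bigwedge^\bullet(q^{-1}\hbar\,M_\vir)}\,K_\vir^{-1/2}\cdot r_{c+d}$, and then reads off the $\bC^*_q$-weights of $M_\vir$ summand by summand. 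This is exactly what resolves the ``main obstacle'' you name: once $M_\vir$ is written down, the shift $q^{-C_i}$ and the exponent $-\epsilon(C_i)$ drop out of the explicit $z$-expansion $x_i(z^{C_i-\delta_i}+\cdots+z^{C_i-1})$ (for $C_i>0>D_i$) without having to chase square-root twists through three maps. I would recast your third paragraph around $M_\vir$.

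One minor slip: in rank one $\cR_d = z^{\max(0,D)}\bC[[z]]$, not $z^{\max(0,-D)}$.
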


\begin{proof}
The proof is essentially the same as \cite[Theorem 3.1]{BFN}. 
Given $c, d\in \cochar(\sK)$, consider the components $\pi^{-1} [z^c], \pi^{-1} [z^{d}] \in \cR$. 
According to diagram (\ref{3-row-diagram}), we need to look at the image of $\pi^{-1}[z^c] \times \pi^{-1} [z^{d}]$ embedded into $\cT \times \Gr_\sK \times \sN_\cO$, and pull it back along $p$. 
By definition of the maps, the image is
$$
[z^c] \times ( z^c \sN_\cO \cap \sN_\cO) \times [z^{d}] \times (z^{c+d} \sN_\cO \cap z^c \sN_\cO) . 
$$
The pullback along $p$, by the factorization (\ref{factorization}), involves the virtual l.c.i. pullback along the diagonal $\Delta: \sN_\cO \to \sN_\cO \times \sN_\cO$. 
Finally, to compute the image along $m_*$, we need to compare the class obtained above, with the pullback of $\pi^{-1} [z^{c+d}]$ along $m$, which is $[z^{c+d}] \times (z^{c+d} \sN_\cO \cap \sN_\cO)$.

In summary, the argument above yields
$$
r_c \cdot r_{d} = \frac{\bigwedge^\bullet (M_\vir^\vee)}{\bigwedge^\bullet (q^{-1} \hbar M^\vir)} \cdot K_\vir^{-1/2} \cdot r_{c+d}, 
$$ 
where $K_\vir := \det (M_\vir^\vee - q^{-1} \hbar M_\vir)$, and 
$$
M_\vir := - z^c \sN_\cO \cap \sN_\cO - z^{c+d} \sN_\cO \cap z^c \sN_\cO + z^c \sN_\cO + z^{c+ d} \sN_\cO \cap \sN_\cO. 
$$

Let us look at the contribution of the $i$-th summand of $\sN$ to $M_\vir$. 
First, it is easy to see that if both $C_i = \langle \chi_i, c\rangle$ and $D_i = \langle \chi_i, d\rangle$ are $\geq 0$ or $\leq 0$, the contribution is $0$. 

Now if $C_i > 0 > D_i$, the $i$-th contribution to $M_\vir$ is $-z^{C_i} x_i \cO + z^{\max\{C_i+D_i, 0\}} x_i \cO = x_i (z^{C_i - \delta_i} + \cdots + z^{C_i - 1} )$. 
The contribution to $r_c \cdot r_d$ is \footnote{The character associated with $z$ is $T_0^*D = q^{-1}$.} 
\ben
&& \frac{(1- x_i^{-1} q^{ C_i - \delta_i }) \cdots (1 - x_i^{-1} q^{C_i-1} ) }{(1-q^{-1} \hbar x_i q^{- C_i + \delta_i }) \cdots (1 - q^{-1} \hbar x_i q^{-C_i +1} ) } \cdot \left( \frac{ x_i^{-\delta_i} q^{\delta_i C_i} q^{- \delta_i (\delta_i +1) / 2} }{ q^{-\delta_i} \hbar^{\delta_i} x_i^{\delta_i} q^{-\delta_i C_i} q^{\delta_i (\delta_i +1) / 2} }  \right)^{-1/2} \\
&=& \frac{(q^{-C_i} q x_i)_{\delta_i} }{(q^{-C_i} \hbar x_i)_{\delta_i} } \cdot (-q^{1/2} \hbar^{-1/2})^{-\delta_i}. 
\een

If $C_i <0 < D_i$, the contribution to $M_\vir$ is $- x_i \cO -  z^{C_i +D_i} x_i \cO + z^{C_i} x_i \cO + z^{\max\{C_i + D_i , 0\}} x_i \cO = - z^{\min\{C_i + D_i, 0\}} x_i \cO + z^{C_i} x_i \cO = x_i ( z^{C_i + \delta_i -1} + \cdots + z^{C_i} )$. 
The contribution to $r_c \cdot r_d$ is 
$$
\frac{(q^{-C_i - \delta_i} q x_i)_{\delta_i} }{( q^{-C_i - \delta_i} \hbar x_i)_{\delta_i} } \cdot (-q^{1/2} \hbar^{-1/2})^{-\delta_i} = \frac{(q^{-C_i} \hbar x_i)_{-\delta_i} }{( q^{-C_i} q x_i)_{-\delta_i} } \cdot (-q^{1/2} \hbar^{-1/2})^{-\delta_i}, 
$$
where the last equality follows from (\ref{id-inv}). 
\end{proof}

Same computations as in Theorem \ref{thm-r_d} would also give an explicit presentation of the right $\cA_\sT(\sK, \sN)$-module $\cM_\sT (\sK, \sN) := K_{\sK_\cO \rtimes \bC^*_q \times \sT \times \bC^*_\hbar} (\cT)_\sN$. 
Denote by $t_d \in \cM_\sT (\sK, \sN)$ the \emph{virtual} structure sheaf of the connected component $\pi^{-1} [z^d] \subset \cT$. 

\begin{Proposition} \label{Prop-right-mod}
The right $\cA_\sT(\sK, \sN)$-module structure of $\cM_\sT (\sK, \sN)$ is given by
$$
t_{c} \cdot r_{d} = \prod_{i:D_i <0} \left[ (-q^{1/2} \hbar^{-1/2})^{D_i} \frac{(q^{- C_i } qx_i)_{- D_i} }{ (q^{- C_i} \hbar x_i )_{- D_i} } \right] \cdot t_{c+d}. 
$$
where $C_i := \langle \chi_i, c\rangle$, $D_i := \langle \chi_i, d \rangle$. 
\end{Proposition}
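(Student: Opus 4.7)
The proof follows the same pattern as Theorem \ref{thm-r_d}, applied to the second row of diagram (\ref{3-row-diagram}) that encodes the right module structure. First I would spell out supports and images in the abelian case: $t_c\boxtimes r_d$ is set-theoretically supported on $(\{[z^c]\}\times z^c\sN_\cO)\times(\{[z^d]\}\times(z^d\sN_\cO\cap\sN_\cO))\subset\cT\times\cR$, and unwinding the definitions of $p$, $q$, $m$ shows that $m_*\circ(\widehat q^!_\vir)^{-1}\circ\widehat p^!_\vir$ sends this class into the support $\pi^{-1}[z^{c+d}]=\{[z^{c+d}]\}\times z^{c+d}\sN_\cO$ of $t_{c+d}\in\cM_\sT(\sK,\sN)$. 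Hence $t_c\cdot r_d=F\cdot t_{c+d}$ for some scalar $F\in K_{\sK\times\sT\times\bC^*_\hbar\times\bC^*_q}(\pt)_\sN$.

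Next I would repeat the obstruction-theoretic bookkeeping of Theorem \ref{thm-r_d}, using the factorization $p=p'_\cT\circ\Delta$, the invertibility of $\widehat q^!_\vir$, and the relative symmetric obstruction theory on $\sG_\cK\times_{\sG_\cO}\cR\to\cT$ from Section \ref{sec-virtual-BFN}. Tracking the virtual normal directions through these maps leads to
$$
t_c\cdot r_d=\frac{\bigwedge^\bullet(M'^{\vee}_\vir)}{\bigwedge^\bullet(q^{-1}\hbar M'_\vir)}\cdot(K'_\vir)^{-1/2}\cdot t_{c+d},
$$
with $K'_\vir=\det(M'^{\vee}_\vir-q^{-1}\hbar M'_\vir)$ and
$$
M'_\vir:=-z^c\sN_\cO-z^{c+d}\sN_\cO\cap z^c\sN_\cO+z^c\sN_\cO+z^{c+d}\sN_\cO \,=\, z^{c+d}\sN_\cO - z^{c+d}\sN_\cO\cap z^c\sN_\cO.
$$
This is exactly the $M_\vir$ of Theorem \ref{thm-r_d} with the two ``$\cdots\cap\sN_\cO$'' terms replaced by their ambient $\cT$-versions, reflecting the fact that $t_c$ and $t_{c+d}$ are supported on fibers of $\cT$ rather than $\cR$.

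Finally, decomposing $M'_\vir$ along $\sN=\bigoplus_i\bC_{\chi_i}$, the $i$-th contribution is $x_i z^{C_i+D_i}\cO-x_i z^{\max(C_i+D_i,\,C_i)}\cO$, which vanishes for $D_i\geq 0$ and equals $x_i(z^{C_i+D_i}+\cdots+z^{C_i-1})$ for $D_i<0$. Substituting the character $q^{-1}$ for $z$ and applying the identity $1-q^{-m}x^{-1}=-q^{-m}x^{-1}(1-q^m x)$ converts the resulting ratio of $K$-theoretic Euler classes together with $(K'_\vir)^{-1/2}$ into the Pochhammer expression $(-q^{1/2}\hbar^{-1/2})^{D_i}(q^{-C_i}qx_i)_{-D_i}/(q^{-C_i}\hbar x_i)_{-D_i}$ appearing in the statement. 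The main obstacle is the obstruction-theoretic identification of $M'_\vir$: once this bundle is correctly produced, the weight-line calculation is routine and mirrors the corresponding step in Theorem \ref{thm-r_d}.
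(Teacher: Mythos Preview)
Your proposal is correct and follows essentially the same approach as the paper: both identify the virtual excess bundle as $M_\vir = z^{c+d}\sN_\cO - z^{c+d}\sN_\cO\cap z^c\sN_\cO$, observe that the $i$-th summand contributes only when $D_i<0$, and reduce to the same weight-line Pochhammer computation. You give a bit more detail on the support analysis and the final conversion to Pochhammer symbols, but the argument is the same.
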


\begin{proof}
By the same computation as in the theorem, for any $c, d \in \cochar (\sK)$, we have
$$
t_c \cdot r_{d} = \frac{\bigwedge^\bullet (M_\vir^\vee)}{\bigwedge^\bullet (q^{-1} \hbar M^\vir)} \cdot K_\vir^{-1/2} \cdot t_{c+d}, 
$$ 
where $K_\vir := \det (M_\vir^\vee - q^{-1} \hbar M_\vir)$, and 
$$
M_\vir \ := \ - z^c \sN_\cO - z^{c+d} \sN_\cO \cap z^c \sN_\cO + z^c \sN_\cO + z^{c+d} \sN_\cO \  = \ z^{c+d} \sN_\cO - z^{c+d} \sN_\cO \cap z^c \sN_\cO . 
$$
The contribution of the $i$-th summand of $\sN$ to $M_\vir$ is $0$ if $D_i\geq 0$, and $z^{C_i + D_i} x_i \cO - z^{C_i} x_i \cO = x_i z^{C_i} (z^{-1} + \cdots + z^{D_i} )$ if $D_i<0$. 
The proposition follows. 
\end{proof}

\begin{Lemma} \label{shift-r-x}
For any $d\in \cochar(\sK)$, and $1\leq j\leq k$, we have
$$
r_d s_j=  q^{-\langle e_j^*, d\rangle} s_j r_d, \qquad t_d s_j = q^{-\langle e_j^*, d\rangle} s_j t_d, 
$$
and $r_d$, $t_d$ commute with $q$, $\hbar$ and $a_i$'s. 
In particular ,
$$
r_d x_i = q^{-\langle \chi_i, d \rangle} x_i r_d, \qquad t_d x_i = q^{-\langle \chi_i, d \rangle} x_i t_d, 
$$
for any $1\leq i\leq n$. 
\end{Lemma}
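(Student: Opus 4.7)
The commutation of $r_d$ and $t_d$ with $q$, $\hbar$, and the flavor parameters $a_i$ is immediate: these come from the central subgroup $\bC^*_q \times \bC^*_\hbar \times \sT$ and therefore live in the coefficient ring $K_{\sK \times \sT \times \bC^*_\hbar \times \bC^*_q}(\pt)_\sN$, which acts centrally on both $\cA_\sT(\sK, \sN)$ and $\cM_\sT(\sK, \sN)$. The relations for $x_i = a_i s^{\chi_i}$ follow from the relations for the $s_j$'s by multiplicativity, so the real content of the lemma is the single identity $r_d s_j = q^{-\langle e_j^*, d\rangle} s_j r_d$, together with its analogue for $t_d$.

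The plan is to reduce this to the standard loop-group shift, in the same spirit as the corresponding computation in \cite{BFN}. The convolution algebra $\cA_\sT(\sK, \sN)$ is $K_{\sK \times \sT \times \bC^*_\hbar \times \bC^*_q}(\pt)_\sN$-linear only in the left variable of the product; the failure of right-linearity is exactly what produces the shift. Concretely, viewing the coefficient $s_j$ as the element $s_j \cdot r_0 \in \cA_\sT(\sK, \sN)$ supported on the identity component, the product $r_d s_j = r_d \cdot (s_j \cdot r_0)$ is computed via diagram (\ref{3-row-diagram}) by pulling $s_j$ back through $p$ and $(\widehat q^!_\vir)^{-1}$ and then pushing forward via $m$. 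Since $s_j$ decorates the \emph{second} factor---the one acted on by $z^d$ from the left under $m$---the pushforward $m_*$ identifies the $\sK_\cO$-character $s_j$ on $z^d\cdot\cR$ with the $\sK_\cO$-character on $\cR$ only after absorbing the $\bC^*_q$-twist coming from the loop rotation of $z^d$.

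The loop-rotation twist itself is standard: with the convention $T_0^* D = q^{-1}$ fixed in the proof of Proposition \ref{thm-r_d}, the loop $z^d \in \sK_\cK$ carries a $\bC^*_q$-weight recorded by the cocharacter $d$, and its pairing with the character $s_j$ is precisely the integer $\langle e_j^*, d\rangle$. This produces the shift $s_j \mapsto q^{-\langle e_j^*, d\rangle} s_j$ and hence the desired relation $r_d s_j = q^{-\langle e_j^*, d\rangle} s_j r_d$. The virtual modifications introduced in Sections \ref{sec-v-i-t} and \ref{sec-virtual-BFN} do not interfere: they only decorate the pushforwards by invertible K-theoretic Euler factors built from tangent/cotangent data of $\cR$, $\cT$ and the morphisms in the diagram, and these factors are unaffected by the insertion of a flat coefficient $s_j$. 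Applying the same argument to the second row of (\ref{3-row-diagram}), which defines the right $\cA_\sT(\sK, \sN)$-module structure on $\cM_\sT(\sK, \sN)$, gives the relation $t_d s_j = q^{-\langle e_j^*, d\rangle} s_j t_d$.

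The main obstacle is essentially bookkeeping: tracking the sign and orientation conventions (especially $T_0^* D = q^{-1}$) through the pullback--pushforward chain so that the exponent comes out as $-\langle e_j^*, d\rangle$ rather than $+\langle e_j^*, d\rangle$. Once this is calibrated on a single generator, the general identity follows formally from the conjugation computation in $\sK_\cK \rtimes \bC^*_q$, and it is the same abelian shift relation that organizes the entire explicit description of $\cA_\sT(\sK, \sN)$ in Section \ref{sec-ab}.
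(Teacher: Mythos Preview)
Your proof is correct and takes essentially the same approach as the paper, which simply cites Lemma~3.20 of \cite{BFN}; you have spelled out the underlying loop-rotation shift argument and correctly noted that the virtual modifications do not alter it.
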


\begin{proof}
This is the same as \cite[Lemma 3.20]{BFN}. 
\end{proof}

\begin{Remark}
Recall that here by multiplication of $r_d$ and $s_j$ we mean the virtual convolution product $\widehat{*}_\vir$ in the ring $\cA_\sT(\sK, \sN)$. 
The lemma implies that the virtual convolution product is \emph{not} $K_\sK(\pt)$-linear in the second variable, 
and the induced right action of $K_\sK (\pt)$ on $\cM_\sT (\sK, \sN)$ is \emph{not} $K_\sK(\pt)$-linear. 
\end{Remark}

\subsection{Abelian point and quantum Hamiltonian reduction} 

There is a special example of \emph{abelian point}. 

\begin{Example} \label{ab-pt}
Let $\tilde\sK = (\bC^*)^n$ be the standard torus acting on $\sN = \bC^n$, and $\tilde\chi_i$ be the standard basis vectors of $\chr(\tilde\sK)$. 
The relation (\ref{x-s-rel}) is $\tilde x_i = a_i\tilde s_i$, $1\leq i\leq n$. 
Let $b_1, \cdots, b_n$ be the standard basis for $\cochar (\tilde\sK) = \bZ^n$.
The virtual Coulomb branch $\cA_\sT (\tilde\sK, \sN)$ is generated by $\tilde r_{\pm b_i}$, $1\leq i\leq n$, satisfying the relations
\begin{equation} \label{ab-pt-2}
\tilde r_{b_i} \cdot \tilde r_{\pm b_j} = \tilde r_{\pm b_j}  \cdot \tilde r_{b_i}, \qquad i\neq j; 
\end{equation}
\begin{equation} \label{ab-pt-3}
\tilde r_{-b_i} \cdot \tilde r_{b_i} = (-q^{1/2} \hbar^{-1/2})^{-1} \frac{1 - q \tilde x_i}{1-\hbar \tilde x_i}, 
\qquad 
\tilde r_{b_i} \cdot \tilde r_{-b_i} = (-q^{1/2} \hbar^{-1/2})^{-1} \frac{1 - \tilde x_i}{1 - q^{-1} \hbar \tilde x_i}. 
\end{equation}
For any $d = \sum_{i=1}^n D_i b_i$, one has $\tilde r_d = \prod_{i: D_i>0} r_{b_i}^{D_i} \prod_{i: D_i<0} r_{-b_i}^{-D_i}$. 
\end{Example}

A general abelian Coulomb branch $\cA_\sT (\sK, \sN)$ can be constructed from $\cA_\sT (\tilde\sK, \sN)$ by the \emph{quantum Hamiltonian reduction} process (see \cite[Section 3(vii)(d)]{BFN}), which we now describe.

Let $\sT_F:= \tilde \sK / \sK$ be the ``actual" flavor torus. 
The $\sT_F^\vee$-invariant part of $\cA_\sT (\tilde\sK, \sN)$ is generated by $\{\tilde r_d \mid d\in \cochar (\sK) \}$. 
The quantum comoment map is the composition $K_{\sT_F \times \sT \times \bC^*_\hbar \times \bC^*_q} (\pt)_\sN \to K_{\tilde\sK \times \sT \times \bC^*_\hbar \times \bC^*_q} (\pt)_\sN \to \cA_\sT (\tilde\sK, \sN)$. 
The virtual Coulomb branch $\cA_\sT (\sK, \sN)$ is then the quotient of $\cA_\sT (\tilde\sK, \sN)^{\sT_F}$ by the left ideal generated by the image of the quantum comoment map (whose intersection with $\cA_\sT (\tilde\sK, \sN)^{\sT_F}$ is a two-sided ideal).

In other words, there is an isomorphism
\begin{equation} \label{Ham-red}
  K_{\sK \times \sT \times \bC^*_\hbar \times \bC^*_q} (\pt)_\sN \otimes_{K_{\tilde\sK \times \sT \times \bC^*_\hbar \times \bC^*_q} (\pt)_\sN } \cA_\sT (\tilde\sK, \sN)^{\sT_F}  
 \ \cong \ 
 \cA_\sT (\sK, \sN) , 
\end{equation}
where $\tilde x_i \mapsto x_i$, or equivalently $\tilde s_i \mapsto s^{\chi_i} = \prod_{j=1}^k s_j^{\langle\chi_i, e_j \rangle}$. 

\begin{Remark}
The above construction realizes the virtual Coulomb branch $\cA_\sT (\sK, \sN)$ as an $K$-theoretic virtual analogue of the hypertoric enveloping algebra, introduced in \cite{BLPW-h-cat-O}. 
\end{Remark}

Let $\iota: \sK \to \tilde\sK$ be the map induced by the $\sK$-action on $\sN$. 
Then $\chi_i = \iota^* \tilde\chi_i$. 
For any $d\in \cochar(\sK)$, we can write the image $\iota (d) = \sum_{i=1}^n D_i d_i$, with $D_i = \langle \chi_i, d_i \rangle$. 
The isomorphism (\ref{Ham-red}) then identifies
$$
\tilde r_{\iota(d)} = \sum_{i: D_i>0} \tilde r_{b_i}^{D_i} \prod_{i: D_i < 0} \tilde r_{-b_i}^{-D_i} \quad \mapsto \quad r_d. 
$$
This construction can be used to prove some useful properties of the virtual Coulomb branch. 

\begin{Lemma} \label{anti-isom}

There is an anti-automorphism $\tau$ of $\cA_\sT(\sK, \sN)$ as a $K_{\sK \times \sT \times \bC^*_\hbar \times \bC^*_q} (\pt)_\sN$-algebra, sending $\tau (r_d) = r_{-d}$, $d\in \cochar(\sK)$. 

\end{Lemma}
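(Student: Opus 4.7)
The plan is to first construct $\tau$ on the abelian-point algebra $\cA_\sT(\tilde\sK, \sN)$ of Example \ref{ab-pt}, where the generators and relations are very simple, and then descend through the quantum Hamiltonian reduction (\ref{Ham-red}) to obtain $\tau$ on $\cA_\sT(\sK, \sN)$. This avoids a direct verification on the general relation (\ref{ab-rel}), which is possible but bookkeeping-heavy, as one would have to constantly push coefficient factors past the $r_d$'s using Lemma \ref{shift-r-x}.

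On $\cA_\sT(\tilde\sK, \sN)$, I would define $\tau$ to be the $\bC$-linear anti-multiplicative map which fixes the coefficient ring $K_{\tilde\sK \times \sT \times \bC^*_\hbar \times \bC^*_q}(\pt)_\sN$ elementwise and sends $\tilde r_{\pm b_i} \mapsto \tilde r_{\mp b_i}$. Well-definedness reduces to compatibility with the defining relations (\ref{ab-pt-2}) and (\ref{ab-pt-3}). Each equation in (\ref{ab-pt-3}) is itself $\tau$-invariant: by anti-multiplicativity, $\tau(\tilde r_{\pm b_i}\tilde r_{\mp b_i}) = \tilde r_{\pm b_i}\tilde r_{\mp b_i}$, while the right-hand coefficients lie in the fixed subring. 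Applying $\tau$ to (\ref{ab-pt-2}) yields $\tilde r_{\mp b_j}\tilde r_{-b_i} = \tilde r_{-b_i}\tilde r_{\mp b_j}$, another instance of the same commutativity for cocharacters supported on distinct basis vectors. Consistency with the nontrivial commutation of Lemma \ref{shift-r-x} is automatic: $\tau(\tilde r_{b_i} \tilde s_j) = \tilde s_j \tilde r_{-b_i}$ agrees with $\tau$ applied to the identified expression $q^{-\langle e_j^*, b_i\rangle}\tilde s_j\tilde r_{b_i}$.

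To descend through (\ref{Ham-red}), note that since $\tau$ fixes the full coefficient ring, it preserves the $\sT_F^\vee$-weight grading, and hence restricts to an anti-automorphism of the invariants $\cA_\sT(\tilde\sK, \sN)^{\sT_F}$. The image of the quantum comoment map sits in this fixed coefficient subring, so the two-sided ideal it generates in the invariants is pointwise $\tau$-stable. Passing to the quotient therefore produces an anti-automorphism on $\cA_\sT(\sK, \sN)$, still fixing $K_{\sK \times \sT \times \bC^*_\hbar \times \bC^*_q}(\pt)_\sN$ elementwise.

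It remains to identify $\tau(r_d)$. Writing $\iota(d) = \sum_i D_i b_i$ with $D_i = \langle \chi_i, d\rangle$, the reduction identifies $r_d$ with $\tilde r_{\iota(d)} = \prod_{i: D_i > 0}\tilde r_{b_i}^{D_i}\prod_{i: D_i < 0}\tilde r_{-b_i}^{-D_i}$, where the factors for distinct $i$ commute, removing any ordering ambiguity. Applying the anti-multiplicative $\tau$ reverses the product order and swaps $b_i \leftrightarrow -b_i$, yielding $\prod_{i: D_i < 0}\tilde r_{b_i}^{-D_i}\prod_{i: D_i > 0}\tilde r_{-b_i}^{D_i}$, which is precisely $\tilde r_{-\iota(d)}$, corresponding to $r_{-d}$. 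The most delicate point throughout is keeping this reordering synchronized with the non-commutativity given by Lemma \ref{shift-r-x}, but it works out cleanly because $\tau$ is the identity on all of $\tilde s_j$, $a_i$, $q$, $\hbar$ appearing in the coefficient factors.
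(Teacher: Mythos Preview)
Your proposal is correct and follows exactly the approach indicated in the paper's own proof, which is a two-line remark that it suffices to define $\tau$ on the abelian-point algebra of Example \ref{ab-pt} and check the relations there. You have simply spelled out the details of that verification and of the descent through the quantum Hamiltonian reduction (\ref{Ham-red}).
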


\begin{proof}
By the construction in Example \ref{ab-pt}, it suffices to define $\tau$ for the case of ``abelian point", where it is easy to explicitly check all relations.
\end{proof}

\subsection{Polarization} \label{sec-pol}

By a \emph{polarization}, we mean a Lagrangian subspace $\sN' \subset T^* \sN$; in other words, there is a splitting $T^* \sN \cong \sN' \oplus \hbar^{-1} (\sN')^*$. 

A typical polarization is $\sN$ itself, which we call the \emph{canonical polarization}. 
For the canonical polarization, the torus $\bC^*_\hbar$ acts trivially on $\sN$, and nontrivially on the cotangent fiber $\sN^*$.
However, for a non-canonical choice of polarization, both $\sN'$ and $\hbar^{-1} (\sN')^*$ could be nontrivial under the action of $\bC^*_\hbar$. 

More precisely, suppose that $\sN':= \sN(\Pol)$ is defined by a subset $\Pol \subset \{1, \cdots, n\}$, such that $\sN \cap \sN(\Pol) = \bigoplus_{i\in \Pol} \bC_{\chi_i}$; in other words,
$$
\sN(\Pol) = \bigoplus_{i\in \Pol} \bC_{\chi_i} \oplus \bigoplus_{i\not\in \Pol} \hbar^{-1} \bC_{-\chi_i}. 
$$ 
We would like to see how the virtual Coulomb branch changes if we choose $\sN(\Pol)$ as the polarization to start with. 

More precisely, the virtual Coulomb branch $\cA (\sG, \sN(\Pol))$ is defined in terms of the localized ($\sG_\cO \rtimes \bC^*_q$)-equivariant $K$-theory of the spaces
$$
\cR (\sG, \sN(\Pol) ) := \{ (P, \varphi, s) \mid (P, \varphi) \in \Gr_\sG, \ s \in \sN (\Pol)_\cO, \ \varphi (s) \text{ extends over } 0 \in D \}, 
$$
$$
\cT (\sG, \sN(\Pol)) := \{ (P, \varphi, s) \mid (P, \varphi) \in \Gr_\sG, \ s \in \sN(\Pol)_\cO \}, 
$$
and the associated virtual convolution product. 
Note that we need $\sN\cap \sN(\Pol)$ to be a $\sG$-subrepresentation of $\sN$, which is always true in the abelian case.
The $\bC^*_\hbar$-action is still defined to scale the cotangent direction of $T^* \sN$, rather than that of $T^* \sN(\Pol)$. 

The localized coefficient ring has to be adjusted according to the polarization:
$$
K_{\sK \times \sT \times \bC^*_\hbar \times \bC^*_q} (\pt)_{\sN(\Pol)} := \bC[a^{\pm 1}, s^{\pm 1}, q^{\pm 1}, \hbar^{\pm 1}] \Big[ \frac{1}{1 - q^\bZ \hbar x_i}, i\in \Pol ; \frac{1}{1 - q^\bZ x_i}, i \not\in \Pol  \Big], 
$$
Same calculations as in Theorem \ref{thm-r_d} and Proposition \ref{Prop-right-mod} show the following analogous presentations. 
Denote the virtual structrue sheaves of $\pi^{-1} [z^d] \subset \cR (\sK, \sN(\Pol))$ (resp. $\pi^{-1} [z^d] \subset \cT (\sK, \sN(\Pol))$) by $r_d (\Pol)$ (resp. $t_d (\Pol)$). 

\begin{Lemma}
For any $c, d\in \cochar(\sK)$, we have 
\ben
r_{c} (\Pol) \cdot r_{d} (\Pol) &=& \prod_{i\in \Pol} \left[ (-q^{1/2} \hbar^{-1/2})^{\epsilon (C_i) \delta_i} \cdot  \frac{(q^{-C_i} \hbar x_i)_{\epsilon (C_i) \delta_i} }{(q^{-C_i} q x_i)_{\epsilon (C_i) \delta_i} }   \right]^{-\epsilon (C_i)}  \\
&& \cdot \prod_{i\not\in \Pol} \left[ (-q^{1/2} \hbar^{-1/2})^{\epsilon (C_i) \delta_i} \cdot  \frac{(q^{-C_i} \hbar x_i)_{\epsilon (C_i) \delta_i} }{(q^{-C_i} q x_i)_{\epsilon (C_i) \delta_i} }   \right]^{\epsilon (C_i)} \cdot r_{c+d} (\Pol) ,           
\nonumber  
\een
where $C_i := \langle \chi_i, c \rangle$, $D_i := \langle \chi_i, d\rangle$, and $\delta_i := \delta(C_i, D_i)$.
In particulcar, we have
\ben
 r_{-d} (\Pol) \cdot r_{d} (\Pol) &=&  \prod_{\substack{i: D_i >0, i\in \Pol \\ \text{or } D_i <0, i\not\in \Pol}} \left[ (- q^{1/2} \hbar^{-1/2} )^{D_i} \frac{(\hbar x_i)_{ D_i } }{(q x_i)_{D_i } }   \right]^{-1}   \cdot \prod_{\substack{ i: D_i <0, i\in \Pol \\ \text{or } D_i >0, i\not\in \Pol }} \left[ (- q^{1/2} \hbar^{-1/2} )^{D_i} \frac{(\hbar x_i)_{ D_i } }{(q x_i)_{D_i } } \right] .
\een
\end{Lemma}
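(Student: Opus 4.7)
The plan is to adapt the computation that proves Proposition \ref{thm-r_d} to the polarized setting. Running through the convolution diagram (\ref{3-row-diagram}) with $\sN(\Pol)$ in place of $\sN$, one obtains
\[r_c(\Pol) \cdot r_d(\Pol) = \frac{\bigwedge^\bullet(M_\vir^\vee)}{\bigwedge^\bullet(q^{-1}\hbar M_\vir)} \cdot K_\vir^{-1/2} \cdot r_{c+d}(\Pol),\]
where $M_\vir$ has exactly the same shape as in Proposition \ref{thm-r_d} but with $\sN(\Pol)_\cO$ replacing $\sN_\cO$. Because $\sN(\Pol) = \bigoplus_{i\in\Pol}\bC_{\chi_i} \oplus \bigoplus_{i\notin\Pol}\hbar^{-1}\bC_{-\chi_i}$ splits as a direct sum, the total contribution factors as a product over $i$, and one proceeds by a case analysis according to whether $i \in \Pol$.

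For $i \in \Pol$, the $i$-th summand of $\sN(\Pol)$ coincides with the $i$-th summand of $\sN$: its character is $x_i$ and its $\sK$-weight is $\chi_i$. Consequently, the local computation of intersections and ranks is verbatim the one in Proposition \ref{thm-r_d}, producing the factor $\bigl[\,\cdots\bigr]^{-\epsilon(C_i)}$, where $[\,\cdots\,]$ is the bracketed expression appearing in (\ref{ab-rel-Pol}).

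For $i \notin \Pol$, the $i$-th summand of $\sN(\Pol)$ has character $\hbar^{-1}x_i^{-1}$ and $\sK$-weight $-\chi_i$, so the pairings $C_i$ and $D_i$ are effectively negated (while $\delta_i$ is unchanged, since $\delta(-C_i,-D_i) = \delta(C_i,D_i)$). Running the same case-by-case computation with $(x_i, C_i, D_i)$ replaced by $(\hbar^{-1}x_i^{-1}, -C_i, -D_i)$ yields a Pochhammer ratio in the shifted variables, which one rewrites entirely in terms of $x_i$, $q$, $\hbar$ via the reflection identity
\[(a;q)_n = (-a)^n q^{n(n-1)/2}(a^{-1}q^{1-n};q)_n.\]
A careful bookkeeping of the resulting $q$- and $\hbar$-powers shows that they cancel cleanly, and that the net effect of the substitution is to \emph{invert} the corresponding $i \in \Pol$ factor. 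Equivalently, the bracketed expression $[\,\cdots\,]$ remains unchanged while its exponent flips from $-\epsilon(C_i)$ to $\epsilon(C_i)$, which is exactly the discrepancy prescribed by (\ref{ab-rel-Pol}).

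Taking the product over all $i$ assembles formula (\ref{ab-rel-Pol}), and the special case (\ref{ab-rel-d}) is obtained by specializing $c = -d$, so that $\delta_i = |D_i|$ and all Pochhammer exponents collapse to $\pm D_i$. The main obstacle I foresee is precisely the reflection step for $i \notin \Pol$, where the $q$- and $\hbar$-powers produced by the substitution $x_i \mapsto \hbar^{-1}x_i^{-1}$ must be tracked carefully to verify the exact cancellation claimed above; however, this is essentially the same manipulation already used via identity (\ref{id-inv}) in the proof of Proposition \ref{thm-r_d}, so no genuinely new technique is required beyond patient bookkeeping.
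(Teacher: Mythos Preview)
Your proposal is correct and follows exactly the approach the paper indicates: the paper offers no separate proof for this lemma beyond the remark that ``same calculations as in Theorem \ref{thm-r_d}'' yield the result, and your write-up is precisely that calculation carried out in detail. Your handling of the $i\notin\Pol$ case via the substitution $(x_i,C_i,D_i)\mapsto(\hbar^{-1}x_i^{-1},-C_i,-D_i)$ followed by the reflection identity (\ref{id-inv}) is the right mechanism, and the claimed sign flip $-\epsilon(C_i)\to\epsilon(C_i)$ does indeed follow cleanly from that identity.
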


\begin{Lemma}
The right $\cA_\sT(\sK, \sN (\Pol))$-module structure of $\cM_\sT (\sK, \sN (\Pol))$ is given by
$$
t_{c} (\Pol) \cdot r_{d} (\Pol) = \prod_{\substack{i:D_i <0, i\in \Pol \\ or \, D_i >0, i\not\in \Pol} } \left[ (-q^{1/2} \hbar^{-1/2})^{D_i} \frac{(q^{- C_i } qx_i)_{- D_i} }{ (q^{- C_i} \hbar x_i )_{- D_i} } \right] \cdot t_{c+d} (\Pol). 
$$
where $C_i := \langle \chi_i, c\rangle$, $D_i := \langle \chi_i, d \rangle$. 
\end{Lemma}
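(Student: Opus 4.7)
This lemma is the polarization-dependent analogue of Proposition \ref{Prop-right-mod}, so the plan is to rerun that convolution calculation with $\sN(\Pol)$ in place of $\sN$. First, I would set up the product $t_c(\Pol) \cdot r_d(\Pol)$ using diagram (\ref{3-row-diagram}) applied to the pair $(\sK, \sN(\Pol))$ together with the factorization (\ref{factorization}) of the l.c.i.~map $p$. Exactly as in Proposition \ref{Prop-right-mod}, this yields
$$
t_c(\Pol) \cdot r_d(\Pol) \ = \ \frac{\bigwedge^\bullet(M_\vir^\vee)}{\bigwedge^\bullet(q^{-1}\hbar M_\vir)} \cdot K_\vir^{-1/2} \cdot t_{c+d}(\Pol),
$$
with $K_\vir = \det(M_\vir^\vee - q^{-1}\hbar M_\vir)$ and
$$
M_\vir \ = \ z^{c+d}\sN(\Pol)_\cO \ - \ z^{c+d}\sN(\Pol)_\cO \cap z^c \sN(\Pol)_\cO.
$$

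Next I would unpack $M_\vir$ along the decomposition $\sN(\Pol) = \bigoplus_{i\in\Pol}\bC_{\chi_i} \oplus \bigoplus_{i\not\in\Pol}\hbar^{-1}\bC_{-\chi_i}$. For $i \in \Pol$ the summand is identical to the canonical case, giving the contribution $x_i(z^{C_i+D_i} + \cdots + z^{C_i-1})$ exactly when $D_i < 0$ and $0$ otherwise. For $i \not\in \Pol$, the $\sK$-weight on $\hbar^{-1}\bC_{-\chi_i}$ is $-\chi_i$, so the relevant exponents of $z$ become $-C_i$ and $-C_i - D_i$ in place of $C_i$ and $C_i + D_i$; the contribution to $M_\vir$ is therefore nonzero exactly when $D_i > 0$, in which case it equals $\hbar^{-1}x_i^{-1}(z^{-C_i - D_i} + \cdots + z^{-C_i - 1})$.

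Finally I would evaluate the Euler class ratio and the $K_\vir^{-1/2}$ factor summand by summand, using that $z$ has $\bC^*_q$-weight $q^{-1}$. For $i \in \Pol$ with $D_i < 0$ this is verbatim the computation already carried out in Proposition \ref{Prop-right-mod}, producing the factor $(-q^{1/2}\hbar^{-1/2})^{D_i}(q^{-C_i}qx_i)_{-D_i}/(q^{-C_i}\hbar x_i)_{-D_i}$. For $i \not\in \Pol$ with $D_i > 0$, the analogous computation produces
$$
(-q^{1/2}\hbar^{-1/2})^{D_i} \cdot \frac{(q^{-C_i-D_i}\hbar x_i)_{D_i}}{(q^{-C_i-D_i+1}x_i)_{D_i}},
$$
and the Pochhammer inversion identity (\ref{id-inv}) rewrites this as the same uniform expression $(-q^{1/2}\hbar^{-1/2})^{D_i}(q^{-C_i}qx_i)_{-D_i}/(q^{-C_i}\hbar x_i)_{-D_i}$, now legitimately indexed by the negative integer $-D_i$. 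The main obstacle is purely bookkeeping: tracking the extra $\hbar^{-1}$ twist and sign flip of weights in the non-canonical summands, carefully combining the resulting $\hbar$-powers with those coming from $K_\vir^{1/2}$, and verifying that the two a priori distinct case-formulas collapse into the single unified expression in the statement. Once that is done the argument is parallel to Proposition \ref{Prop-right-mod}.
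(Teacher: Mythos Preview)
Your proposal is correct and follows exactly the route the paper intends: the paper merely says this lemma follows from ``the same calculations as in Theorem \ref{thm-r_d} and Proposition \ref{Prop-right-mod},'' and you have faithfully carried out that calculation with $\sN(\Pol)$ in place of $\sN$. One small correction: the Pochhammer identity you invoke at the end to rewrite $(q^{-C_i-D_i}\hbar x_i)_{D_i}/(q^{-C_i-D_i+1}x_i)_{D_i}$ in the form with subscript $-D_i$ is actually (\ref{qd-shift-x}), namely $(q^{-d}x)_d = 1/(x)_{-d}$, rather than (\ref{id-inv}).
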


\subsection{Varying polarizations} \label{sec-vary-pol}

Let $\ff(-)$ denote the \emph{fraction field}. 
Consider
$$
K_{\sK \times \sT \times \bC^*_\hbar \times \bC^*_q} (\pt)_\loc := \ff (\bC[a^{\pm 1}, s^{\pm 1}, \hbar^{\pm 1}, q^{\pm 1}]) ,
$$
in which all $K_{\sK \times \sT \times \bC^*_q\times \bC^*_\hbar} (\pt)_{\sN(\Pol)}$ are viewed as subrings.

We would like to relate virtual Coulomb branches defined for different choices of polarizations. 
A rough expectaction is that virtual Coulomb branches for all choices of polarizations are isomorphic after appropriately enlarging the coefficient rings.

One way to realize it is via localization. 
However, since the spaces involved are infinite-dimensional, we need to pass to certain completed coefficient rings.
Consider
$$
K_{\sK \times \sT \times \bC^*_q\times \bC^*_\hbar} (\pt)^\wedge_{\sN(\Pol)} := \bC[a^{\pm 1}, s^{\pm 1}, \hbar^{\pm 1}]((q)) \Big[ \frac{1}{1 - q^\bZ \hbar x_i}, i\in \Pol;  \frac{1}{1 - q^\bZ x_i}, i \not\in \Pol \Big].
$$
In particular, \emph{Laurent series} in $q$, and hence functions such as $(x_i)_\infty$ and $\vartheta(x_i)$ are included in this ring. 
These are regarded as subrings in $K_{\sK \times \sT \times \bC^*_q\times \bC^*_\hbar} (\pt)^\wedge_\loc := \ff (\bC[a^{\pm 1}, s^{\pm 1}, \hbar^{\pm 1}] ((q)))$.

Denote by 
$$
\cA_\sT(\sK, \sN(\Pol))^\wedge := K_{\sK \times \sT \times \bC^*_\hbar \times \bC^*_q} (\pt)_{\sN(\Pol)}^\wedge \otimes \cA_\sT (\sK, \sN(\Pol) ) , 
$$ 
and similar for the right module $\cM_\sT (\sK, \sN(\Pol))^\wedge$. 

Let $l\gg 0$ be a sufficiently large integer. 
Recall that $\cR^l(\sK, \sN(\Pol))$ is defined with $\sN(\Pol)_\cO$ replaced with the cutoff $\sN_\cO / z^l \sN_\cO$. 
Denote the resulting virtual Coulomb branch by $\cA_\sT (\sK, \sN(\Pol) )^{\leq l}$ (resp. the right module $\cM_\sT (\sK, \sN(\Pol) )^{\leq l}$).
By localization, we see that 
$$
\cA_\sT (\sK, \sN(\Pol) )^{\leq l}
\cong
 \bigoplus_{d\in \cochar(\sK)} K_{\sK \times \sT \times \bC^*_\hbar \times \bC^*_q} (\pt)_{\sN(\Pol)} \cdot [z^d], 
$$
for any given polarization $\Pol$. 
Unfortunately, this localized description is not well-defined \footnote{More precisely, the twist by $K_\vir^{1/2}$ does not have a well-defined limit as $l\to\infty$.} as $l\to \infty$, even if we pass to the completed coefficient ring. 

However, the following proposition shows that the transition isomorphism 
\begin{equation} \label{id-mod-l}
\Phi_l : \cA_\sT (\sK, \sN(\Pol) )_\loc^{\leq l} \cong \cA_\sT (\sK, \sN )_\loc^{\leq l} , 
\qquad
\text{(resp. } \cM_\sT (\sK, \sN(\Pol) )_\loc^{\leq l} \cong \cM_\sT (\sK, \sN )_\loc^{\leq l} \text{ )}
\end{equation}
is well-defined as $l\to \infty$, where $\cA_\sT (\sK, \sN(\Pol) )_\loc^{\leq l}$ means the base change over $K_{\sK \times \sT \times \bC^*_\hbar \times \bC^*_q} (\pt)_\loc$. 

\begin{Proposition} 

\begin{enumerate}[1)]
\setlength{\parskip}{1ex}

\item The limit of (\ref{id-mod-l}) as $l\to \infty$ defines an isomorphism of left $K_{\sK \times \sT \times \bC^*_\hbar \times \bC^*_q}(\pt)^\wedge_\loc$-modules
\begin{equation} \label{Phi}
\Phi : \cA_\sT (\sK, \sN(\Pol) )^\wedge_\loc \xrightarrow{\sim} \cA_\sT (\sK, \sN )^\wedge_\loc,
\end{equation}
such that (where $D_i := \langle \chi_i, d\rangle$)
\ben
\Phi(r_d (\Pol) ) &=& \prod_{i: D_i > 0, i\not\in \Pol} \left[ -\hbar^{-1/2} x_i^{-1}  \frac{\vartheta(x_i) }{\vartheta (\hbar x_i) } \cdot \frac{(q \hbar^{-1} x_i^{-1} )_{ D_i } }{(x_i^{-1})_{ D_i }} \right] \\
&& \cdot \prod_{i: D_i < 0, i\not\in \Pol} \left[ -\hbar^{-1/2} x_i^{-1}  \frac{\vartheta(x_i) }{\vartheta (\hbar x_i) } \cdot \frac{(\hbar x_i )_{- D_i } }{(q x_i)_{- D_i }} \right]  \cdot  r_d.
\een

\item The limit of (\ref{id-mod-l}) as $l\to \infty$ defines an isomorphism of left $K_{\sK \times \sT \times \bC^*_\hbar \times \bC^*_q}(\pt)^\wedge_\loc$-modules
\begin{equation} \label{Phi-M}
\Phi : \cM_\sT (\sK, \sN(\Pol) )^\wedge_\loc \xrightarrow{\sim} \cM_\sT (\sK, \sN )^\wedge_\loc ,
\end{equation}
such that
$$
\Phi ( t_d (\Pol)) = \prod_{i\not\in \Pol} \left[ -\hbar^{-1/2} x_i^{-1}  \frac{\vartheta(x_i) }{\vartheta (\hbar x_i) } \cdot (q \hbar^{-1} )^{D_i} \right] \cdot  t_d. 
$$

\end{enumerate}

\end{Proposition}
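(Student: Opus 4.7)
The plan is to prove both isomorphisms simultaneously via a cutoff-and-localization argument. For each finite $l$, one constructs the isomorphism $\Phi_l$ at the truncation $\sN/z^l\sN$, and then checks that the resulting transition factors on the basis of fixed-point classes stabilize as $l\to\infty$ inside the completed ring $K_{\sK\times\sT\times\bC^*_\hbar\times\bC^*_q}(\pt)^\wedge_\loc$.

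First, after inverting all nontrivial $\sT$-characters, Atiyah--Bott localization applied to $\cR(\sK,\sN(\Pol)/z^l\sN(\Pol))$ and to its canonical analogue produces $K^\wedge_\loc$-module decompositions indexed by the components $[z^d]$ of $\Gr_\sK$; the same holds for $\cT$ in place of $\cR$, yielding the module version. Under this identification, both $r_d(\Pol)$ and $r_d$ become invertible multiples of the class $[z^d]$, and $\Phi_l$ is defined on this basis by the ratio of these multiples; likewise for $t_d(\Pol)$ versus $t_d$. Thus the content of the proposition is to identify the limit of these transition factors as $l\to\infty$ with the advertised closed-form expression.

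Second, one computes this ratio explicitly. Over $[z^d]$, the character of the relevant fiber as an $\cO/z^l\cO$-module is elementary: for $i\in\Pol$ the two polarizations agree, while for $i\not\in\Pol$ the weight $x_i$ is replaced by $\hbar^{-1}x_i^{-1}$ and the twist by $z^d$ is replaced by $z^{-d}$. Using the modified virtual structure of Section \ref{sec-sym-obs} together with the intersection $z^d\sN(\Pol)_\cO\cap\sN(\Pol)_\cO$ that defines $\cR$, the argument parallels the proof of Proposition \ref{thm-r_d} and produces, for each $i\not\in\Pol$, a $q$-Pochhammer ratio of $(1-q^j\hbar x_i)$-factors over $(1-q^j x_i)$-factors with finitely many indices depending on $l$ and $D_i$, multiplied by the square-root normalization arising from $K_\vir^{1/2}$. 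The computation for $t_d(\Pol)$ is parallel and simpler, since $\cT$ involves only $z^d\sN(\Pol)_\cO$ without the intersection step.

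Third, one passes to the limit $l\to\infty$ inside $K^\wedge_\loc$, which admits Laurent series in $q$ and hence the infinite Pochhammer symbols $(u)_\infty$. The tail of each infinite product recombines into a ratio of $(x_i)_\infty$ and $(\hbar x_i)_\infty$, which repackages as $\vartheta(x_i)/\vartheta(\hbar x_i)$ via the standard identity recalled in Appendix \ref{P-symbol}; the remaining finite Pochhammer factors and $-\hbar^{-1/2}x_i^{-1}$ normalizations reproduce precisely the coefficients in the stated formulas for $\Phi(r_d(\Pol))$ and $\Phi(t_d(\Pol))$. Since $\Phi$ is defined on a $K^\wedge_\loc$-basis by multiplication by invertible elements, it is automatically a $K^\wedge_\loc$-module isomorphism, and compatibility with the left $K^\wedge_\loc$-action is immediate from the construction.

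The main obstacle is the bookkeeping in step three: one must verify that the $l$-dependent divergent products cancel between the two polarizations up to a \emph{finite} Pochhammer correction, and that the square-root twist $K_\vir^{1/2}$ from Definition \ref{defn-v-lci-p} on the flipped weights accumulates with precisely the exponent stated (namely a single $-\hbar^{-1/2}x_i^{-1}$ factor per $i\not\in\Pol$ together with the sign-dependent Pochhammer correction in the $r_d$ case, and the uniform $(q\hbar^{-1})^{D_i}$ factor in the $t_d$ case). Given the explicit character formulas this is an elementary but delicate calculation, and it is the step where the $\vartheta$-function appearance is genuinely forced.
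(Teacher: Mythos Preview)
Your proposal is correct and follows essentially the same approach as the paper: compute the transition factor between the two virtual structure sheaves over $[z^d]$ at finite cutoff $l$, identify it as a ratio of $\bigwedge^\bullet$-classes coming from the difference $\delta M^\vir = z^d\sN(\Pol)_\cO\cap\sN(\Pol)_\cO - z^d\sN_\cO\cap\sN_\cO$ (mod $z^l$) together with the $K_\vir^{1/2}$ twist, and then take $l\to\infty$ to repackage the infinite products as $\vartheta$-ratios. The paper's proof is exactly this computation carried out explicitly for a single representative index $i\notin\Pol$ with $D_i>0$, with the other cases declared similar.
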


\begin{proof}
Let us compute the image $\Phi (r_d (\Pol))$, before the renormalization by theta functions. 
In order to compute the difference of virtual classes, look at the difference of the two Lagrangians corresponding to the polarizations
$$
\delta \sN := \sN(\Pol)-\sN = \sum_{i\not\in \Pol} (\hbar^{-1} \bC_{-\chi_i} -\bC_{\chi_i} ). 
$$
Recall that $\pi^{-1}[z^d] = z^d \sN_\cO \cap \sN_\cO$. 
We have 
$$
r_d (\Pol) =  \frac{\bigwedge^\bullet (q^{-1} \hbar \cdot \delta M^\vir)}{\bigwedge^\bullet (\delta M_\vir^\vee)} \cdot \delta K_\vir^{1/2} \cdot r_d, 
$$
where $\delta M^\vir = z^d \sN(\Pol)_\cO \cap \sN(\Pol)_\cO - z^d \sN_\cO \cap \sN_\cO$, and $\delta K = \det (M_\vir^\vee - q^{-1} \hbar M_\vir)$. 
Then
$$
\delta M^\vir = \sum_{i: D_i >0, i\not\in \Pol} (\hbar^{-1} x_i^{-1} \cO - x_i z^{D_i} \cO) + \sum_{i: D_i <0, i\not\in \Pol} (\hbar^{-1} x_i^{-1} z^{-D_i} \cO - x_i \cO). 
$$ 
Let $l \gg 0$ be a sufficiently large integer. 
Consider $\delta M^\vir$ mod $z^l$. 
The contribution to $\Phi (r_d (\Pol))$ from a term with $D_i>0$, $i\not\in \Pol$ turns out to be
\ben
&& \frac{(q^{-1} x_i^{-1}; q^{-1})_l }{(\hbar x_i)_l}  \left( \frac{q^{-l} x_i^{-l} q^{-l (l-1)/2} }{\hbar^l x_i^l q^{l(l-1)/2} } \right)^{-1/2}  \frac{(q^{D_i} x_i^{-1})_l}{(q^{-1} \hbar q^{-D_i} x_i; q^{-1} )_l}  \left( \frac{q^{l D_i } x_i^{-l} q^{l (l-1)/2} }{q^{-l} \hbar^l q^{-l D_i} x_i^l q^{- l (l-1)/2} } \right)^{-1/2} \\
&=& \frac{(qx_i )_l}{(\hbar x_i )_l} \cdot (-q^{1/2} \hbar^{-1/2})^{-l} \cdot \frac{(q^{D_i} x^{-1})_l }{(q \hbar^{-1} q^{D_i} x_i^{-1} )_l } \cdot (-q^{1/2} \hbar^{-1/2} )^l , 
\een
whose limit as $l\to \infty$ is
$$
\frac{(qx_i )_\infty}{(\hbar x_i)_\infty} \cdot \frac{(x_i^{-1})_\infty }{(q \hbar^{-1} x_i^{-1} )_\infty } \cdot \frac{(q \hbar^{-1} x_i^{-1} )_{D_i} }{(x^{-1})_{D_i} } = -\hbar^{-1/2}x_i^{-1} \frac{\vartheta (x_i )}{\vartheta (\hbar x_i)} \cdot \frac{(q \hbar^{-1} x_i^{-1} )_{D_i} }{(x_i^{-1})_{D_i} } . 
$$
The cases $D_i<0$ and for $t_d$'s are similar. 
\end{proof}

In the rest of the paper, we will use isomorphisms $\Phi$ to identify virtual Coulomb branches with different polarizations. 
It is natural to ask whether they also preserve the convolution product. 
The answer is simply no, as the identity $r_0 (\Pol)$ is not preserved.
However, they do preserve the convolution product if we remove the factors involving theta functions.

Consider the following automorphism $\Xi$ of $\cA_\sT (\sK, \sN)^\wedge_\loc$ and $\cM_\sT (\sK, \sN)^\wedge_\loc$ as left $K_{\sK \times \sT \times \bC^*_\hbar \times \bC^*_q}(\pt)^\wedge_\loc$-modules, defined by rescaling 
\begin{equation} \label{Xi}
\Xi (r_d) := \prod_{i\not\in \Pol} \left[ -\hbar^{1/2} x_i  \frac{\vartheta(\hbar x_i) }{\vartheta (x_i) } \cdot (-q^{1/2} \hbar^{-1/2} )^{-D_i} \right] \cdot r_d, 
\end{equation}
\begin{equation} \label{Xi-M}
\Xi (t_d) := \prod_{i\not\in \Pol} \left[ -\hbar^{1/2} x_i  \frac{\vartheta(\hbar x_i) }{\vartheta (x_i) } \cdot (q \hbar^{-1} )^{-D_i}  \right] \cdot t_d.
\end{equation}
Note that both $\Phi$ and $\Xi$ depend on the polarization $\Pol$, which we omit to avoid cumbersome notations.

\begin{Proposition}
The composite isomorphisms of left $K_{\sK \times \sT \times \bC^*_\hbar \times \bC^*_q}(\pt)^\wedge_\loc$-modules 
$$
\Xi \circ \Phi : \ \cA_\sT (\sK, \sN(\Pol) )^\wedge_\loc \xrightarrow{\sim} \cA_\sT (\sK, \sN )^\wedge_\loc , \quad \text{(resp. } \cM_\sT (\sK, \sN(\Pol) )^\wedge_\loc \xrightarrow{\sim} \cM_\sT (\sK, \sN )^\wedge_\loc \text{ )}
$$
preserves the convolution product (resp. the right $\cA_\sT (\sK, \sN )^\wedge_\loc$-module structure), such that 
\begin{equation} \label{Xi-Phi}
\Xi \Phi (r_d (\Pol)) = \prod_{i\not\in \Pol} \left[ (-q^{1/2} \hbar^{-1/2} )^{-D_i} \frac{(\hbar x_i )_{- D_i } }{(q x_i)_{- D_i } } \right]^{ - \epsilon (D_i)}  \cdot  r_d,  
\end{equation}
(resp. $\Xi \Phi (t_d (\Pol)) = t_d$). 
In particular, $\Xi \circ \Phi$ is well-defined as an isomorphism of algebras
$$
\Xi \circ \Phi : \ \cA_\sT (\sK, \sN(\Pol) )_\loc \xrightarrow{\sim} \cA_\sT (\sK, \sN )_\loc, \quad \text{(resp. } \cM_\sT (\sK, \sN(\Pol) )_\loc \xrightarrow{\sim} \cM_\sT (\sK, \sN )_\loc \text{ )}
$$
over $K_{\sT \times \sK \times \bC^*_\hbar \times \bC^*_q}(\pt)_\loc$, without the completion. 
\end{Proposition}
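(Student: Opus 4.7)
The plan is to verify the proposition in three stages: first check the explicit formula \eqref{Xi-Phi}; second deduce that the convolution product is preserved from the explicit formulas in Proposition \ref{thm-r_d} and Lemma \ref{ab-rel-Pol}; third observe that the resulting expressions contain no theta functions, so the isomorphism descends to the uncompleted rings.

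For the first stage, one simply multiplies the factors defining $\Phi$ from \eqref{Phi} with those defining $\Xi$ from \eqref{Xi}. The theta-ratios $\vartheta(x_i)/\vartheta(\hbar x_i)$ and $\vartheta(\hbar x_i)/\vartheta(x_i)$, together with the $\pm \hbar^{\pm 1/2} x_i^{\mp 1}$ prefactors, cancel in pairs. The surviving Pochhammer ratios can be rewritten using the reflection identity $(a)_n = (-a)^n q^{n(n-1)/2}(a^{-1}q^{1-n})_n$ and the inversion $(a)_{-n} = 1/(aq^{-n})_n$ recalled in Appendix \ref{P-symbol}; for $D_i>0$ one gets
$$
(q\hbar^{-1} x_i^{-1})_{D_i}/(x_i^{-1})_{D_i} = (q\hbar^{-1})^{D_i}(\hbar q^{-D_i}x_i)_{D_i}/(q^{1-D_i}x_i)_{D_i} = (-q^{1/2}\hbar^{-1/2})^{2D_i}(qx_i)_{-D_i}/(\hbar x_i)_{-D_i},
$$
which after combining with the prefactor $(-q^{1/2}\hbar^{-1/2})^{-D_i}$ coming from $\Xi$ yields exactly the $i\notin\Pol$, $D_i>0$ factor in \eqref{Xi-Phi}. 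The case $D_i<0$ is dual, and the computation for $\Xi\Phi(t_d(\Pol))$ is even shorter: the $D_i$-independent theta-ratio in $\Phi$ cancels against $\Xi$, and the powers of $(q\hbar^{-1})^{D_i}$ cancel against $(-q^{1/2}\hbar^{-1/2})^{-D_i}$ up to $(-q^{1/2}\hbar^{-1/2})^{D_i}$, which is absorbed using the analogue of \eqref{Phi-M}, leaving simply $t_d$.

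For the second stage, write $L_e^{(i)} := [(-q^{1/2}\hbar^{-1/2})^{-E_i}(\hbar x_i)_{-E_i}/(qx_i)_{-E_i}]^{-\epsilon(E_i)}$, so that by \eqref{Xi-Phi}, $\Xi\Phi(r_e(\Pol)) = \prod_{i\notin\Pol} L_e^{(i)}\cdot r_e$. Using Proposition \ref{thm-r_d} and the polarized relation (\ref{ab-rel-Pol}), preservation of the convolution product reduces, factor-by-factor in $i\in\{1,\ldots,n\}$, to the identity
\begin{equation*}
\left[(-q^{1/2}\hbar^{-1/2})^{\epsilon(C_i)\delta_i}\,\frac{(q^{-C_i}\hbar x_i)_{\epsilon(C_i)\delta_i}}{(q^{-C_i}qx_i)_{\epsilon(C_i)\delta_i}}\right]^{2\epsilon(C_i)} = \frac{L_c^{(i)} L_d^{(i)}}{L_{c+d}^{(i)}},\qquad i\notin\Pol,
\end{equation*}
while for $i\in\Pol$ both sides give the same factor automatically. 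After splitting according to the signs of $C_i$, $D_i$ and $C_i+D_i$, each case collapses to a telescoping $q$-Pochhammer identity derived from $(a)_{m+n}=(a)_m(aq^m)_n$ together with the same reflection/inversion identities used above. The module statement is handled in the same spirit using Proposition \ref{Prop-right-mod} and its polarized analogue; since $\Xi\Phi(t_d(\Pol))=t_d$, only the $L_d^{(i)}$ correction of $r_d$ enters the check, and the verification is a strict subset of the algebra computation.

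Finally, because \eqref{Xi-Phi} and $\Xi\Phi(t_d(\Pol))=t_d$ involve only finite products of rational expressions in $a, s, \hbar, q$, the maps $\Xi\circ\Phi$ respect the subrings $\cA_\sT(\sK,\sN(\Pol))_\loc \subset \cA_\sT(\sK,\sN(\Pol))^\wedge_\loc$ and $\cM_\sT(\sK,\sN(\Pol))_\loc \subset \cM_\sT(\sK,\sN(\Pol))^\wedge_\loc$; bijectivity is inherited from the completed versions. The main obstacle is purely bookkeeping: carefully tracking the eight sign patterns of $(C_i, D_i, C_i+D_i)$ against membership of $i$ in $\Pol$, and matching the $\epsilon$-exponents that appear in \eqref{ab-rel}, \eqref{ab-rel-Pol}, and \eqref{Xi-Phi}. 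No further geometric input is needed beyond the explicit presentations already established.
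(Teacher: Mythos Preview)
Your approach is correct but differs from the paper's in a meaningful way. For the formula \eqref{Xi-Phi} both you and the paper arrive at the same place via the same identity \eqref{id-inv}, so there is nothing to compare in stage one. For preservation of the product, however, the paper does \emph{not} verify the general structure constants \eqref{ab-rel} against \eqref{ab-rel-Pol} directly. Instead it invokes the quantum Hamiltonian reduction \eqref{Ham-red}: since $\cA_\sT(\sK,\sN)_\loc$ is the quotient of the $\sT_F^\vee$-invariants in the abelian-point algebra $\cA_\sT(\tilde\sK,\sN)_\loc$ of Example~\ref{ab-pt}, and $\Xi\circ\Phi$ is compatible with this reduction, it suffices to check the rank-one relations \eqref{ab-pt-2} and \eqref{ab-pt-3}. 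The first is immediate from the commutation $\tilde r_{\pm b_j}\tilde x_i=\tilde x_i\tilde r_{\pm b_j}$ for $i\neq j$, and the second is a two-line computation involving only $\tilde r_{\pm b_i}$ for a single $i\notin\Pol$. This sidesteps entirely the eightfold sign-pattern analysis you outline.

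Your direct route works, but one point in your writeup needs attention: when you form $\Xi\Phi(r_c(\Pol))\cdot\Xi\Phi(r_d(\Pol))=(\prod L_c^{(i)})\,r_c\cdot(\prod L_d^{(i)})\,r_d$, commuting $r_c$ past $L_d^{(i)}$ introduces a $q$-shift $x_i\mapsto q^{-C_i}x_i$ by Lemma~\ref{shift-r-x}. The identity you must actually verify is
\[
\left[(-q^{1/2}\hbar^{-1/2})^{\epsilon(C_i)\delta_i}\,\frac{(q^{-C_i}\hbar x_i)_{\epsilon(C_i)\delta_i}}{(q^{-C_i}qx_i)_{\epsilon(C_i)\delta_i}}\right]^{2\epsilon(C_i)}
=\frac{L_c^{(i)}(x_i)\,L_d^{(i)}(q^{-C_i}x_i)}{L_{c+d}^{(i)}(x_i)},
\]
not the unshifted version you wrote. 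This is still a routine telescoping check, but the shift is essential for the cases with $C_i$ and $D_i$ of opposite sign. The paper's reduction to the abelian point avoids this subtlety because in rank one the shift and the single relation \eqref{ab-pt-3} interact in only one way.
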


\begin{proof}
The formula for the image $\Xi \Phi (r_d (\Pol))$ follows from (\ref{id-inv}). 
To check that $\Xi \circ \Phi$ preserves the convolution product, it suffices to prove it for the special abelian point case $\cA_\sT (\tilde\sK, \sN)_\loc $ in Example \ref{ab-pt}. 

Let us check the defining relations for $\Xi \Phi (\tilde r_{\pm b_i} (\Pol))$'s. 
Relation (\ref{ab-pt-2}) follows from the commutativity of $\tilde r_{\pm b_j}$ with $\tilde x_i$ for $i\neq j$. 
To check (\ref{ab-pt-3}), for $i\not\in \Pol$, we see that $\Xi \Phi (\tilde r_{-b_i}(\Pol) ) \cdot \Xi \Phi (\tilde r_{b_i}(\Pol) )$ is
\ben
 && (-q^{1/2} \hbar^{-1/2} ) \frac{1 - \hbar \tilde x_i}{1 - q \tilde x_i } \cdot \tilde r_{-b_i} 
 \cdot \left[ (-q^{1/2} \hbar^{-1/2} )^{-1} \frac{1 - \tilde x_i}{1 - q^{-1} \hbar \tilde x_i} \right]^{-1} \cdot \tilde r_{b_i} \\
&=& \left[ (-q^{1/2} \hbar^{-1/2} ) \frac{1 - \hbar \tilde x_i}{1 - q \tilde x_i } \right]^2 \cdot \tilde r_{-b_i} \cdot \tilde r_{b_i} \\
&=&  (-q^{1/2} \hbar^{-1/2} ) \frac{1 - \hbar \tilde x_i}{1 - q \tilde x_i }, 
\een
which coincides with $\Xi \Phi (\tilde r_{-b_i}(\Pol) \cdot \tilde r_{b_i}(\Pol))$. 
The computation for $\Xi \Phi (\tilde r_{b_i}(\Pol) ) \cdot \Xi \Phi (\tilde r_{-b_i}(\Pol) )$ is similar.   
\end{proof}

\vspace{2em}

\section{Higgs branch and quasimaps}

\subsection{Symplectic quotients} \label{sec-symp-quot}

We return to the general case. 
Recall that $\sG$ is a complex connected reductive group, and $\sN$ is a representation of $\sG$. 
Let $T^*\sN = \sN \oplus \sN^*$ be the cotangent bundle, where the natural action of $\sG$ preserves the canonical symplectic structure. 
There is a complex moment map $\mu: T^* \sN \to \fg^*$. 

In order to define quasimaps, we need to consider holomorphic symplectic quotients, or Higgs branches of $(\sG, \sN)$. 
For that purpose, a \emph{stability condition} $\theta \in \chr (\sG)$ has to be chosen. 
The holomorphic symplectic quotient is the GIT quotient $X_\theta := \mu^{-1} (0) /\!/_\theta \sG$, which is an open subscheme of the quotient stack $\fX: = [\mu^{-1}(0) / \sG]$. 

When $\theta$ is chosen generically, $X_\theta$ is a smooth holomorphic symplectic variety.
Unless otherwise specified, we will omit the subscript and denote it simply by $X$.
Typical examples of this type include hypertoric varieties (where $\sG$ is abelian) and Nakajima quiver varieties (where $\sG$ is a product of $GL(n)$). 
When $\theta = 0$, $X_0$ is an affine variety with symplectic singularities. 
The canonical projection $X \to X_0$ is a symplectic resolution.

The stacky quotient $\fX = [\mu^{-1} (0) / \sG]$ is the zero locus of the moment map, which fits into the diagram
$$
\xymatrix{
& & [ T^* \sN \times \fg^* / \sG] \ar[d] \\
[\sN / \sG] \ar@{^{(}->}[r] & \fX \ar@{^{(}->}[r]^-{i_\mu} & [T^* \sN / \sG] \ar@/_15pt/[u]_\mu . 
}
$$
Here $\cV_{\fg^*} := [ T^* \sN \times \fg^* / \sG]$ is the vector bundle over $[T^* \sN / \sG]$ associated with $\fg^*$, and $\mu$ is the section defined by the moment map. 
By standard constructions, the intrinsic normal cone $\cC_\fX$ embeds into the pullback vector bundle stack $\iota^* \cV_{\fg^*}$, which defines a perfect obstruction theory $E_\fX^\bullet$ on $\fX$, in the sense of \cite{Poma}.
Alternatively, $E_\fX^\bullet$ fits into the distinguished triangle 
\begin{equation} \label{E-fX}
\xymatrix{
\cV_{\fg^*}^\vee \ar[r]^-{(d\mu)^\vee} & L_{[T^* \sN / \sG]} \ar[r] & E_\fX^\bullet \ar[r]^{[1]} & 
}
\end{equation}
The cotangent bundle $\Omega_X$ of the stable locus $X\subset \fX$ is the restriction of $E_\fX^\bullet$ to $X$. 

On the other hand, if we choose $T^* \sN$ as the smooth covering of $[T^* \sN / \sG]$, then $L_{[T^*\sN / \sG]} = [\Omega_{T^* \sN} \to \fg^* \otimes \cO ]$ lies in degree $[0,1]$, and the map is exactly $d\mu$. 
Therefore, $E_\fX^\bullet$ (viewed over the smooth covering $\mu^{-1}(0)$) is the pullback of the complex $[\fg \otimes \cO \xrightarrow{(d\mu)^\vee} \Omega_{T^* \sN} \xrightarrow{d\mu} \fg^* \otimes \cO]$, which lies in degree $[-1,1]$. 
It is clear that $E_\fX^\bullet$ is symplectic.

\subsection{Quasimaps}

\begin{Definition} [\cite{CF-K-M, pcmilec}]
A \emph{quasimap} from $\bP^1$ to $X$ is a morphism $f: \bP^1 \to \fX = [\mu^{-1}(0) / \sG]$, which maps into the stable locus $X \subset \fX$ away from a $0$-dimensional subscheme of $\bP^1$. 
\end{Definition}

Alternatively, a quasimap consists of the data $(P, s)$, where $P$ is a principal $\sG$-bundle over $\bP^1$, and $s$ is a section of the associated bundle $P \times_\sG T^* \sN$, which is required to be generically stable. 

The image of $f$ under $\fX \to B \sG$ defines a class $\deg f \in \pi_0 (B \sG) \cong \pi_1 (\sG)$, which we call the \emph{degree} of $f$. 
Let $\Eff(X)$ be \emph{effective cone} of $X$, i.e. the semigroup in $\pi_1(\sG)$ which can be realized by quasimaps. 

Let $\QM (X)$ be the moduli space of quasimaps from $\bP^1$ to $X$, and $\QM(X)_d$ be its connected component of quasimaps of degree $d \in \Eff (X)$. 
As we do not allow the domain $\bP^1$ to vary, the universal curve over the moduli space is a trivial family.
In fact, $\QM(X)$ is an open substack of the Hom stack $\Hom (\bP^1, \fX)$, which admits the following universal diagram
$$
\xymatrix{
\QM (X) \times \bP^1 \ar@{^{(}->}[r] \ar[d]_-\pi & \Hom(\bP^1, \fX) \times \bP^1 \ar[r]^-f \ar[d]_-\pi & \fX \\
\QM (X) \ar@{^{(}->}[r] & \Hom (\bP^1, \fX). 
}
$$

Let $E_\fX^\bullet$ be the perfect obstruction theory on $\fX$ given as in the previous section.
There is a perfect obstruction theory on the Hom stack $\Hom (\bP^1, \fX)$:
$$
E^\bullet := R\pi_* (f^* E_\fX^\bullet \otimes \omega_{\bP^1}) [1],
$$
which restricts to a perfect obstruction theory \cite{Kim-holo-symp, pcmilec} on the open substack $\QM(X)$. 
The perfect-ness is guaranteed by the generic stability condition on the definition of quasimaps. 

We give a heuristic discussion on how the obstruction theory $E^\bullet$ can be interpreted as coming from a ``Lagrangian subspace" $\Hom (\bP^1, [\sN/\sG])$. 
Consider the embedding $i_\sN : [\sN / \sG] \hookrightarrow \fX$. 
The pullback of the complex $E_\fX^\bullet$ to $[\sN/\sG]$ splits as a direct sum $[\fg \otimes \cO \xrightarrow{(d\mu)^\vee} \Omega_{\sN}] \oplus [\Omega_{\sN^*} \xrightarrow{d\mu} \fg^* \otimes \cO]$, where the two summands live in degrees $[-1,0]$ and $[0,1]$ respectively. 
Serre duality on $\bP^1$ implies that the obstruction theory $E^\bullet$, when pulled back to $\Hom (\bP^1, [\sN/\sG])$, is a ``$(-1)$-shifted double" of the natural obstruction theory on $\Hom (\bP^1, [\sN/\sG])$.

In other words, restricted to the ``Langrangian subspace" $\Hom(\bP^1, [\sN/\sG])$, $E^\bullet$ can be realized as follows: first construct the natural obstruction theory on $\Hom(\bP^1, [\sN/\sG])$ itself, and then consider its associated symmetric obstruction theory as in (\ref{E-double}).

\subsection{Vertex functions}

Let $\QM^\circ (X) \subset \QM(X)$ be the open substack, consisting of quasimaps $f$ such that $f(\infty) \in X$.  
Then an evaluation map $\ev_\infty: \QM^\circ (X) \to X$ is well-defined, sending $f$ to $f(\infty)$. 
Since it is not proper, one has to work over certain localized coefficient ring to define its pushforward. 

Let $\bC^*_q$ be the 1-dimensional torus scaling $\bP^1$. 
We assume that the $\bC^*_q$-fixed loci of $\QM^\circ (X)$ are proper, which is true for all hypertoric varieties and Nakajima quiver varieties.
Then the pushforward $(\ev_{\infty})_* : K_0^{\bC^*_q} (\QM^\circ (X))_\loc \to K_{\bC^*_q} (X)_\loc$ is well-defined by $\bC^*_q$-localization. 
Here the localized ring means to $\otimes \bC(q)$. 

In practice, $X$ often admits an action by a torus $\sA$, which descends from $\sN$. 
One can then consider the full equivariant theory over $K_{\sA \times \bC^*_\hbar \times \bC^*_q} (\pt)_\loc$, where the localized ring is in the sense of Definition \ref{loc}.

The perfect obstruction theory constructed in the previous section induces a virtual structure sheaf $\cO_\vir$. 
Twisted by the square root $K_\vir^{1/2}$ of the virtual canonical bundle, we get the modified virtual structure sheaf $\widehat\cO_\vir = \cO_\vir \otimes K_\vir^{1/2}$. 

Let $\tau \in K_{\sA \times \bC^*_\hbar} (\fX):= K_{\sG \times \sA \times \bC^*_\hbar} (\pt)$, which defines a tautological virtual vector bundle $\tau |_X$ in $K_{\sA \times \bC^*_\hbar} (X)$ via the Kirwan map. 
For $s^\chi \in \chr(\sG)$, we denote by $S^\chi := s^\chi |_X$ its image under the Kirwan map. 
Let $\ev_0: \QM(X) \to \fX$ be the evaluation map at $0\in \bP^1$. 

Denote by $Q$ the collection of K\"ahler parameters and $a$ the collection of $\sA$-equivariant parameters. 
In later sections, for simplicity we will omit some of $q$, $\hbar$ and $a$ in the notation and write it as $V^{(\tau)} (Q)$. 

\begin{Definition}[\cite{pcmilec}] \label{Defn-vertex}
The \emph{vertex function} for $X$ with descendent $\tau$ is defined as
$$
V^{(\tau)} (q, \hbar, Q, a) := \sum_{d \in \Eff(X)} Q^d (\ev_\infty)_* ( \ev_0^* \tau \cdot  \widehat\cO_{\vir, d} ) \ \in \ K_{\sA \times \bC^*_\hbar \times \bC^*_q} (X)_\loc [[Q^{\Eff(X)}]]. 
$$
Moreover, the renormalized vertex function is defined as
$$
\widetilde V^{(\tau)} (q, \hbar, Q, a) := e^{\frac{\la \ln S, \ln Q_\sharp \ra}{\ln q}} \cdot \frac{(q T_X^{1/2})_\infty}{(\hbar T_X^{1/2})_\infty} \cdot V^{(\tau)} (q, \hbar, Q, a),
$$
where $T_X^{1/2}$ is the vector bundle associated with $\sN$, and the shifted K\"ahler parameter $Q_\sharp := Q \cdot (-\hbar ^{1/2})^{-\det T_X^{1/2}}$.
This is a multi-valued meromorphic function in $Q$.
\end{Definition}

Vertex functions as above are introduced by A. Okounkov \cite{pcmilec, AOelliptic}, originally called the \emph{bare vertex}, and the boundary condition $f(\infty)\in X$ is described as ``nonsingular".
The name ``vertex" is motivated from its relationship with the topolocial vertex in relative Donaldson--Thomas theory. 
There is another type of vertex functions $\widehat V^{(\tau)} (q, \hbar, Q, a)$, called the \emph{capped vertex}, defined by relative quasimaps, where the domain $\bP^1$ are allowed to bubble into a chain of $\bP^1$'s, and the $\infty$ of the last $\bP^1$ is required to be nonsingular. 

Being able to add the two types of insertions -- nonsingular and relative -- to quasimaps, one can define various invariants and operators on the $K$-theory ring of $X$.
Important examples include the \emph{gluing operator}, which appears in a degeneration formula for the capped vertex, and the \emph{capping operator} $\Psi (q, Q)$, which relates the bare and capped vertices: 
\begin{equation} \label{Psi-V}
\widehat V^{(\tau)} (q, Q) = \Psi (q, Q) \cdot V^{(\tau)} (q, Q). 
\end{equation}

A key feature one can obtain from the capping operator $\Psi (q, Q)$ is the $q$-difference equation. 
In \cite{pcmilec}, it is proved that $\Psi (q, Q)$ satisfies two systems of $q$-difference equations, with respect to the K\"ahler and equivariant paramters respectively.
It follows that (the renormalized) bare vertex functions also satisfy two corresponding systems of $q$-difference equations, conjectured to be related to each other by 3d mirror symmetry. 
The different asymptotic behavior of the vertex function with respect to the two sets of $q$-difference equations also leads to the seminal introduction of elliptic stable envelopes \cite{AOelliptic}. 

Implicitly, $q$-difference equations can also be described as the $q$-difference modules generated by their solutions. 
We will call the one generated by renormalized vertex functions the \emph{quantum $q$-difference modules}.  

Taking $q\to 1$, one can obtain from the quantum $q$-difference module a commutative algebra, called the \emph{Bethe algebra}\cite{AOBethe}, also referred to as the PSZ quantum $K$-theory ring \cite{PSZ, SZ}, or Wilson loop algebra \cite{JMNT}.
It is related to, but in general not the same as the usual quantum $K$-theory ring in the sense of Givental--Lee \cite{Giv-WDVV, Lee, GL}.
The idea dates back to the gauge/Bethe correspondence of Nekrasov--Shatashvili \cite{NS1, NS2}, which relates enumerative geometry to integrable systems such as quantum spin chains (see also \cite{PSZ, UY}). 

Geometrically, the Bethe algebra may also be defined (as in \cite{PSZ}) by inserting $3$ relative conditions. 
The evaluation map from the moduli of relative quasimaps to $X$ is proper, which allows one to take the non-equivariant limit $q\mapsto 1$. 
The class obtained this way is called the \emph{quantum tautological class} $\widehat \tau (Q) := \widehat V^{(\tau)} (1, \hbar, Q, a) \in K_{\sT \times \bC^*_\hbar} (X)_\loc [[Q^{\Eff(X)}]]$. 
They are of the form $\widehat \tau (Q) = \tau |_X + O(Q)$. 

In later sections, we will explore the quantum $q$-difference module and Bethe algebra from the aspect of virtual Coulomb branch. 
The following result will be useful. 

\begin{Lemma}[{\cite[Lemma~4.16]{SZ}}] \label{lemma-Bethe-hat}
Given $\tau, \eta\in K_{\sT \times \bC^*_\hbar} (\fX)$, one has $\widehat{\tau \eta} (Q) = \widehat\tau (Q) * \widehat\eta (Q)$, where $*$ is the product in the Bethe algebra. 
\end{Lemma}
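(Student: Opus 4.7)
\textbf{Proof plan for Lemma \ref{lemma-Bethe-hat}.} The plan is to derive the identity from the standard gluing (degeneration) formula for $K$-theoretic relative quasimap invariants, together with the fact that $\tau$ and $\eta$ are pulled back from $\fX$ along the evaluation map. This is essentially the argument given in \cite{SZ}; here I would rewrite it compatibly with our virtual formalism.

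First, I would recall the precise definition of $\widehat\tau(Q)$ as a non-equivariant limit of a capped descendent invariant: fix a moduli of quasimaps with three relative conditions at $0, 1, \infty \in \bP^1$ (so the evaluation maps at the three nodes land in $X$ rather than $\fX$, making them proper), insert $\ev_{1/2}^* \tau$ at the generic point $1/2$ (or equivalently at the interior marked point that tracks the descendent), and push forward with $\widehat\cO_\vir$. The Bethe algebra product $\widehat\tau(Q) * \widehat\eta(Q)$ is defined by gluing two such capped moduli spaces along a matched pair of relative $X$-valued endpoints, which is the $q \to 1$ limit of the convolution furnished by the gluing operator.

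Second, I would apply the degeneration formula to a one-parameter degeneration of $\bP^1$ into two $\bP^1$'s meeting at a node. The invariant on the generic fibre of the degeneration is unchanged, while the invariant on the special fibre splits as a sum over distributions of degree between the two components, with a fibre product along the common relative evaluation. Since $\tau$ and $\eta$ are pulled back from $\fX$ and the two insertion points on the two components are distinct, but lie on the same universal curve, the two descendent insertions combine via Künneth into a single insertion of $\tau \cdot \eta$ after the fibre product is performed over the node. This identifies the glued expression with $\widehat{\tau\eta}(Q)$.

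The main obstacle will be carefully setting up the degeneration formula in the present virtual $K$-theoretic framework, in particular verifying compatibility of $\widehat\cO_\vir$ with the degeneration so that the virtual pullback of the product equals the virtual pullback of the two pieces glued along the node. A second subtlety is that the gluing operator in the $K$-theoretic setup is nontrivial, so one must check that it becomes the identity (or absorbs cleanly into $\widehat\cO_\vir$) in the $q \to 1$ non-equivariant limit used to define the Bethe algebra product; this is the content analogous to the ``gluing operator is trivial on quantum tautological classes'' step, and is where I would rely most directly on \cite[Lemma~4.16]{SZ}.
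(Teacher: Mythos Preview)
The paper does not give its own proof of this lemma; it is quoted verbatim from \cite[Lemma~4.16]{SZ}. Your proposal correctly identifies the mechanism behind that proof --- the degeneration/gluing formula for relative quasimap invariants together with the triviality of the gluing operator in the $q\to 1$ limit --- so in broad outline you are reproducing the argument the paper is citing.

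That said, your Step~2 is muddled in a way that matters. You write that ``the two descendent insertions combine via K\"unneth into a single insertion of $\tau\cdot\eta$ after the fibre product is performed over the node,'' and that ``this identifies the glued expression with $\widehat{\tau\eta}(Q)$.'' But the glued (special-fibre) expression is, by your own Step~1, the \emph{definition} of $\widehat\tau(Q)*\widehat\eta(Q)$; it is the \emph{generic} fibre that must be identified with $\widehat{\tau\eta}(Q)$. The correct reasoning on the generic fibre is not K\"unneth across the node but rather that, in the non-$\bC^*_q$-equivariant limit, the position of a descendent insertion at a smooth point of $\bP^1$ is irrelevant, so $\ev_{p_1}^*\tau\cdot\ev_{p_2}^*\eta$ may be replaced by $\ev_p^*(\tau\eta)$ at a single point, yielding $\widehat{\tau\eta}(Q)$. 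Once you straighten out which side of the degeneration is which, and make this ``moving the descendent'' step explicit (this is where $q\to 1$ is used, not just in the gluing operator), the argument goes through as in \cite{SZ}.
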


\vspace{2em}

\section{Verma modules and vertex functions: abelian case} \label{sec-Verma-vertex}

In this section, we study the interplay between the virtual Coulomb branch and quasimaps in the abelian case. 
We adopt the notations from Section \ref{sec-ab}. 
Recall that $\sG = \sK$ is a $k$-torus, $\sN = \bigoplus_{i=1}^n \bC_{\chi_i}$, and $\sT = (\bC^*)^n$ is the standard $n$-torus. 
For simplicity, we assume that $\chi_i \neq 0$, for any $i$. 

\subsection{Hypertoric varieties} \label{sec-hypertoric}

Holomorphic symplectic quotients by tori are hypertoric varieties, introduced in \cite{BD, Got}. 
Similarly to their toric analogue, the geometry of hypertoric varieties are well-studied, everything of which can be expressed explicitly in terms of combinatorial data. 
For example, they can be described by the data of hyperplane arrangements. 
However, in this paper we will not adopt that language. 

Let $\theta \in \chr (\sK)_\bR$ be a stability condition. 
In the following, we will assume that $\theta$ is chosen generically, and hence the hypertoric variety $X = \mu^{-1} (0) /\!/_\theta \sK$ is \emph{smooth} \footnote{This means that the action of $\sK$ is \emph{unimodular}.}.
We would like to characterize all possible generic choices of $\theta$. 

Regard the characters $\chi_i $, $1\leq i\leq n$ as vectors in $\chr(\sK)_\bR$.  
A codimension-$1$ hyperplane in $\chr(\sK)_\bR \cong \bR^k$ is called a \emph{wall} if it is spanned by a subset of $\{\chi_i \mid 1\leq i\leq n\}$. 
The connected components of the complement of walls are referred to as \emph{chambers}. 

These walls and chambers are exactly those appearing in the variation of GIT. 
The stability condition $\theta$ is \emph{generic} if and only if it does not lie on any walls.
We call the chamber $\cK$ which contains $\theta$ the \emph{K\"ahler cone} of $X$, and denote it by $\cK(X)$. 

\begin{Remark}
The combinatorial desription of the wall-and-chamber structure follows essentially from the Hilbert--Mumford criterion. 
A more general treatment is given in \cite{HL-S}, where the walls are codimension-1 faces of the \emph{zonotope}. 
In our case, the zonotope is the convex hull in $\chr(\sK)_\bR$ generated by vectors of the form $\sum_{i\in \Pol} \chi_i - \sum_{i\not\in \Pol} \chi_i$, for all possible choices of polarizations. 
It then implies that the edges of the zonotope are parallel to $\pm \chi_i$'s and our description follows. 
\end{Remark}

Next we look at the dual picture. 

\begin{Definition}
A vector $\rho \in \cochar(\sK)_\bZ$ is called a \emph{circuit}, if it is the normal vector of a wall in $\chr(\sK)_\bR$ satisfying $\langle \theta, \rho \rangle \geq 0$, and primitive, in the sense that $\rho / m\in \cochar(\sK)_\bZ$ for $m\in \bZ$ implies that $m =1$. 
\end{Definition}

The collection of circuits also determines a wall-and-chamber structure in the dual space $\cochar(\sK)_\bR$. 
To avoid confusion with the previous terminology, we call them as \emph{cowalls} and \emph{cochambers}. 
The dual $\cK(X)^\vee$ of the K\"ahler cone is the \emph{effective cone} $\Eff(X)_\bR$ of $X$.
It is the cone generated by all circuits, which is the closure of a union of several cochambers. 

$\sT$-fixed points of $X$ are characterized as follows.
Let $\bp \subset \{1, \cdots, n\}$ be a subset of size $k$, such that the characters $\{\chi_i \mid i\in \bp\}$ are linearly independent. 
There is a unique way of writing the stability condition $\theta$ as an $\bR$-linear combination $\theta = \sum_{j \in \bp} c_j \chi_j$, where all $c_j \neq 0$. 
Define \footnote{These are denoted by $\cA_\bp^\pm$ in \cite{Shenf, SZ}. } $\bp^\pm \subset \bp$ to be the subset such that $c_j >0$ (resp. $c_j<0$) if and only if $j\in \bp^+$ (resp. $j\in \bp^-$). 
Then there is a fixed point, still denoted by $\bp$, whose representative $(\vec z, \vec w) \in T^* \bC^n$ can be taken simply as
$$
z_j =1, \ w_j = 0, \text{ if } j \in \bp^+; \qquad z_j = 0, \ w_j =1, \text{ if } j \in \bp^-; \qquad z_i = w_i = 0, \text{ if } i \not\in \bp. 
$$

In other words, $\sT$-fixed points of $X$ are given by simplicial cones in $\chr(\sK)_\bR$ cut out by walls and containing $\theta$. 
More precisely, the cone corresponding to $\bp$ is generated by $\{\chi_j \mid j\in \bp^+ \} \cup \{ - \chi_j \mid j\in \bp^-\}$, which we denote by $\cK (\bp)$ and call the \emph{K\"ahler cone} of $\bp$. 
In particular, it contains the K\"ahler cone $\cK (X)$ of $X$. 

Dually, the effective cone $\Eff(\bp) \subset \cochar (\sK)_\bR$, consisting of effective curve classes mapping to the fixed point $\bp$ , is the dual of $\cK(\bp)$. 
We have $\Eff(\bp) \subset \Eff(X)$, and it is a union of closures of cochambers. 
A curve class $d \in \cochar (\sK)$ lies in $\Eff(\bp)$ if and only if $\langle \chi_j , d \rangle \geq 0$ for $j\in \bp^+$ and $\langle \chi_j, d \rangle \leq 0$ for $j\in \bp^-$.

\begin{Example} \label{A_2}
Figure \ref{Fig} shows the example $\cA_2$, the minimal resolution of $\bC^2 / \bZ_2$, where $\sK = (\bC^*)^2$, $\sN = \bC^3$, and $\chi_1 = (1,0)$, $\chi_2 = (0,1)$, $\chi_3 = (-1,-1)$. 
The stability condition is chosen such that $\theta_1 > \theta_2 >0$. 
There are three fixed points: 
\begin{itemize}
\setlength{\parskip}{1ex}
\item $\bp_{12} = \{1,2\}$, with $\cK (\bp_{12})$ generated by $\chi_1, \chi_2$, and $\Eff(\bp_{12})$ generated by $\rho_1$, $\rho_2$; 
\item $\bp_{13} = \{1,3\}$, with $\cK(\bp_{13})$ generated by $\chi_1, -\chi_3$, and $\Eff(\bp_{13})$ generated by $\rho_1$, $\rho_3$; 
\item $\bp_{23} = \{2,3\}$, with $\cK(\bp_{23})$ generated by $-\chi_2, -\chi_3$, and $\Eff(\bp_{23})$ generated by $\rho_2$, $\rho_3$. 
\end{itemize}
The K\"ahler and effective cone are shown as shaded regions. 

\begin{figure}
\centering
\begin{tikzpicture}[scale=0.7]

\fill[gray!30] (0,0) -- (3,0) -- (3,3) -- cycle ;

\draw (-3,0) -- (3,0);
\draw (0,-3) -- (0,3);
\draw (-3,-3) -- (3,3); 

\draw[->] (0,0) -- (2,0) node[anchor=north west] {$\chi_1$}; 
\draw[->] (0,0) -- (0,2) node[anchor=south east] {$\chi_2$}; 
\draw[->] (0,0) -- (-2,-2) node[anchor=north west] {$\chi_3$}; 

\filldraw (2,1) circle (1pt) node[anchor = west] {$\theta$};

\fill[gray!30] (9,0) -- (12,-3) -- (12,3) -- (9,3) -- cycle ;

\draw (6,0) -- (12,0);
\draw (9,-3) -- (9,3);
\draw (6,3) -- (12,-3); 

\draw[->] (9,0) -- (11,0) node[anchor=north west] {$\rho_2$}; 
\draw[->] (9,0) -- (9,2) node[anchor=south east] {$\rho_1$}; 
\draw[->] (9,0) -- (11,-2) node[anchor=north east] {$\rho_3$}; 

\node at (10.5, 1.5) {$\Eff(X)$};

\end{tikzpicture}
\caption{Wall-and-chamber structures in $\chr(\sK)_\bR$ and $\cochar(\sK)_\bR$ for $\cA_2$}  \label{Fig}
\end{figure}

\end{Example}

\subsection{Vertex functions}

Let $L_i \in K_{\sT \times \bC^*_\hbar} (X)$ be the $K$-theory class of the tautological line bundle associated with the character $\chi_i$; let $S_j$ be the $K$-theory class of the line bundle associated with the standard basis of character $s_j$. 
The relation between them is given by $L_i = a_i \prod_{j=1}^k S_j^{\langle \chi_i, e_j \rangle}$, as already appeared in (\ref{x-s-rel}). The equivariant $K$-theory of $X$ can be expressed as
\begin{eqnarray} \label{K-ring}
K_{\sT \times \bC^*_\hbar} (X) &\cong& \dfrac{ \bC [a_1^{\pm 1}, \cdots, a_n^{\pm 1}, \hbar^{\pm 1}, s_1^{\pm 1}, \cdots, s_k^{\pm 1} ] }{ \langle \prod_{i: \langle \chi_i, \rho \rangle > 0 } (1 - x_i) \prod_{i: \langle \chi_i, \rho \rangle < 0} (1 - \hbar x_i) \mid \rho: \text{circuits} \rangle } \\
&\cong& \dfrac{ \bC [a_1^{\pm 1}, \cdots, a_n^{\pm 1}, \hbar^{\pm 1}, s_1^{\pm 1}, \cdots, s_k^{\pm 1} ] }{ \bigcap_{\bp\in X^\sT} \la 1 - x_j , j\in \bp^+; 1 - \hbar x_j, j \in \bp^- \ra }  \label{K-ring-2} , 
\end{eqnarray}
where $x_i\mapsto L_i$, and $s_j \mapsto S_j$ under the Kirwan map. 

Given a fixed point $\bp \in X^\sT$, the restrictions $L_i |_\bp$ and $S_j |_\bp$ are easy to determine. 
The boundary vectors of the effective cone $\Eff(\bp)$ form a $\bZ$-basis of $\cochar(\sK)$. 
We then have
\begin{equation} \label{K-ring-bp}
L_i |_\bp = 1, \qquad \text{for } i\in \bp^+, \qquad L_i |_\bp = \hbar^{-1} , \qquad \text{for } i\in \bp^-.
\end{equation}
All $S_j |_\bp$, $1\leq j\leq k$ and $L_i |_\bp$ for $i\not\in \bp$ can be solved from the relations in (\ref{K-ring}). 

The following explict formula for the vertex functions of $X$ is computed in \cite{SZ}, by $\sT$-localization and reduction to fixed points. 
The components of the vertex function at the $\sT$-fixed points can be obtained directly, given the following restriction formula for $L_i$'s to the fixed points.

\begin{Proposition}[\cite{SZ}] \label{formula-V}
Let $X$ be a hypertoric variety. 
The vertex function for $X$ is
$$
V^{(\tau (s))} (Q) = \sum_{d\in \Eff(X)} Q^d \cdot \prod_{i=1}^n \left[ (-q^{1/2} \hbar^{-1/2})^{D_i} \frac{(\hbar L_i)_{D_i}}{ (q L_i)_{D_i} }  \right] \cdot \tau (q^{\la e_1^* , d\ra} S_1, \cdots, q^{\la e_k^*, d \ra} S_k ). 
$$
where $D_i := \langle \chi_i, d \rangle$. 
\end{Proposition}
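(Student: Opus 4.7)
The plan is to compute $V^{(\tau(s))}(Q)$ by $\sT \times \bC^*_q$-localization on $\QM(X)_d^\circ$. Since $X$ has isolated $\sT$-fixed points and the Kirwan presentation (\ref{K-ring}) identifies $K_{\sT\times\bC^*_\hbar}(X)$ via its restrictions to those fixed points, it suffices to compute $V^{(\tau(s))}(Q)\big|_\bp$ for each $\bp \in X^\sT$ and then recognize the assembled class as the stated global expression.

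First I would identify the $\sT\times\bC^*_q$-fixed components of $\QM(X)_d^\circ$ whose image under $\ev_\infty$ is $\bp$. A $\bC^*_q$-fixed quasimap of degree $d$ consists of a $\bC^*_q$-equivariant principal $\sK$-bundle $P$ of degree $d$ on $\bP^1$ together with $\bC^*_q$-equivariant sections $s_i$ of the associated line bundle $\cL_i$ (of degree $D_i = \langle \chi_i, d\rangle$) and $s_i^*$ of $\hbar^{-1}\cL_i^{\vee}$, all concentrated at the base point $0 \in \bP^1$. The requirement that $f(\infty) = \bp$, combined with $L_i|_\bp = 1$ for $i\in \bp^+$ and $L_i|_\bp = \hbar^{-1}$ for $i\in \bp^-$, forces each nonzero section to be a specific monomial, so the fixed component is a reduced point precisely when $d\in \Eff(\bp)$ and empty otherwise. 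The virtual tangent at this point is computed from $E^\bullet = R\pi_*(f^* E^\bullet_\fX \otimes \omega_{\bP^1})[1]$: using (\ref{E-fX}) and the summand splitting of $E^\bullet_\fX|_{[\sN/\sK]}$ into an $\sN$-piece and a shifted $\hbar^{-1}\sN^*$-piece, the obstruction theory decomposes indexwise, the $i$-th summand contributing $H^\bullet(\bP^1, \cL_i) - H^\bullet(\bP^1, \hbar^{-1}\cL_i^{\vee}\otimes \omega_{\bP^1})$ as a $\bC^*_q$-virtual representation whose character is a geometric series of length $|D_i|$.

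Applying the twisted localization formula with $\widehat\cO_\vir = \cO_\vir \otimes K_\vir^{1/2}$, each $i$-th factor at the fixed point becomes the Pochhammer ratio $(-q^{1/2}\hbar^{-1/2})^{D_i}(\hbar L_i)_{D_i}/(qL_i)_{D_i}$, where the prefactor arises from $K_\vir^{1/2}$ and the identity (\ref{id-inv}) packages the cases $D_i \geq 0$ and $D_i < 0$ into a single expression. The descendent factor emerges because $\ev_0$ reads off the tautological bundle $S_j$ at $0 \in \bP^1$ while $L_i|_\bp$ is evaluated at $\infty$: the $\bC^*_q$-weight of $S_j$ at the two fixed points of $\bP^1$ differs by $q^{\langle e_j^*, d\rangle}$, producing the evaluation $\tau(q^{\langle e_1^*, d\rangle} S_1, \ldots, q^{\langle e_k^*, d\rangle} S_k)$. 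Summing over $\bp$ and $d$, and invoking (\ref{K-ring}) to lift the per-$\bp$ restrictions to a global class, gives the stated formula. The main obstacle will be the careful bookkeeping of $\bC^*_q$-weights: verifying that the two sign cases for $D_i$ combine into a single Pochhammer ratio, and confirming that the $K_\vir^{1/2}$ square-root choice yields exactly the factor $(-q^{1/2}\hbar^{-1/2})^{D_i}$ with the correct sign conventions.
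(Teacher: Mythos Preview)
Your proposal is correct and follows essentially the same approach as the paper indicates: the paper does not give its own proof but cites \cite{SZ}, stating that the formula is obtained ``by $\sT$-localization and reduction to fixed points,'' which is precisely the strategy you outline. Your identification of the fixed loci, the indexwise splitting of the obstruction theory, and the packaging of the $D_i \gtrless 0$ cases via (\ref{id-inv}) are all the right ingredients; the remaining work is indeed just the weight and sign bookkeeping you flag.
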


\subsection{Mixed polarization}

Recall that as in Section \ref{sec-pol}, one can choose a different polarization $\sN(\Pol) \subset T^* \sN$ as the starting Lagrangian subspace in the definition of the Coulomb branch. 
Here we would like to furthur generalize this concept, by considering different polarization simultaneously.

\begin{Definition} \label{loc-eff-pol}
Let $\cW$ be a cochamber contained in the effective cone $\Eff(X)$. 
Define the \emph{mixed polarization} with respect to $\cW$ as: 
$$
\Pol [\cW] := \{ i \mid \langle \chi_i, d \rangle \geq 0 , \text{ for all } d\in \cW \}. 
$$
\end{Definition}

It is well-defined by definition of cochambers. 
Therefore, $\sN (\Pol[\cW]) = \bigoplus_{i: \langle \chi_i, \cW \rangle \geq 0} \bC_{\chi_i} \oplus \bigoplus_{i: \langle \chi_i, \cW \rangle \leq  0} \hbar^{-1} \bC_{-\chi_i}$. 
Note that since $\cW$ is a cochamber, $\langle \chi, \cW \rangle = 0$ would imply $\chi=0$, which cannot happen by our nondegeneracy assumption that all $\chi_i\neq 0$. 

Intuitively, we would like to choose a particular ``local" polarization for each degree class $d\in \cochar(\sK)$. 
Precisely, for $d\in \cW \cup (-\cW)$ lying in an effective cochamber or its opposite, we would like to choose the polarization $\Pol[\cW]$. 
This ``local choice", although clear for degrees $d$ lying in cochambers, admits ambiguity for degree classes lying on cowalls, i.e. for such $d$, $\Pol[\cW]$'s coming from the two adjacent cochambers are different. 
However, it turns out that the associated virtual structure sheaves $r_d (\Pol[\cW])$, $t_d (\Pol[\cW])$ are well-defined. 

\begin{Lemma}
Let $\cW$ and $\cW'$ be two cochambers such that $d \in (\overline\cW \cup (-\overline\cW) ) \cap (\overline\cW' \cup (-\overline\cW') )$. 
Let $\Phi$, $\Phi'$, $\Xi$, $\Xi'$ be the isomorphisms defined in (\ref{Phi}) (\ref{Phi-M}) (\ref{Xi}) (\ref{Xi-M}), associated with $\Pol[\cW]$ and $\Pol[\cW']$ respectively. 
Then
$$
\Xi  \Phi (r_d (\Pol[\cW])) = \Xi'  \Phi' (r_d (\Pol[\cW'])), \qquad \Xi  \Phi (t_d (\Pol[\cW])) = \Xi'  \Phi' (t_d (\Pol[\cW'])) . 
$$
\end{Lemma}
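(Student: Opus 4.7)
The plan is to dispose of the two assertions separately. For $t_d$, the explicit formulas preceding this lemma show $\Xi\Phi(t_d(\Pol)) = t_d$ for \emph{any} choice of $\Pol$, with the $\vartheta$-factors and $(q\hbar^{-1})^{\pm D_i}$-factors from $\Xi$ and $\Phi$ canceling exactly. Since the right-hand side is manifestly independent of the polarization, both $\Xi\Phi(t_d(\Pol[\cW]))$ and $\Xi'\Phi'(t_d(\Pol[\cW']))$ equal the common element $t_d \in \cM_\sT(\sK, \sN)^\wedge_\loc$, and this half of the lemma is immediate.

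For $r_d$, the plan is to use (\ref{Xi-Phi}) to reduce the equality to a combinatorial statement about the symmetric difference $\Pol[\cW] \,\triangle\, \Pol[\cW']$. That formula writes $\Xi\Phi(r_d(\Pol))$ as $r_d$ times a product, over $i \not\in \Pol$, of factors raised to the exponent $-\epsilon(D_i)$; this exponent vanishes whenever $D_i = 0$. Consequently two polarizations $\Pol, \Pol'$ produce the same image in $\cA_\sT(\sK, \sN)^\wedge_\loc$ as soon as they agree on the set $\{i : D_i \neq 0\}$, and it suffices to verify the inclusion $\Pol[\cW] \,\triangle\, \Pol[\cW'] \subset \{i : D_i = 0\}$.

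The key input will be the pointedness of the effective cone. Since $\cW, \cW' \subset \Eff(X)$ and the latter is dual to the full-dimensional K\"ahler cone $\cK(X)$, one has $\overline{\Eff(X)} \cap -\overline{\Eff(X)} = \{0\}$; this collapses the mixed cases $d \in \overline\cW \cap -\overline{\cW'}$ and $d \in -\overline\cW \cap \overline{\cW'}$ to $d = 0$, where the claim is vacuous, while the case $d \in -\overline\cW \cap -\overline{\cW'}$ reduces to $d \in \overline\cW \cap \overline{\cW'}$ upon replacing $d$ with $-d$. Granting this reduction, any $i \in \Pol[\cW] \setminus \Pol[\cW']$ has $\langle \chi_i, \cdot \rangle$ strictly positive on the open cochamber $\cW$ and strictly negative on $\cW'$ (using the standing assumption $\chi_i \neq 0$); passing to the closures forces $D_i \geq 0$ and $D_i \leq 0$, hence $D_i = 0$, with the symmetric index set handled identically. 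The main obstacle is then just the bookkeeping of signs across the four cases of the hypothesis; no input beyond the transition formulas and the pointedness of $\Eff(X)$ is required.
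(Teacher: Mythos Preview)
Your proof is correct and follows the same core argument as the paper: for $i$ in the symmetric difference $\Pol[\cW]\,\triangle\,\Pol[\cW']$ one forces $D_i=0$ from the opposite-sign conditions on the two cochambers, so the $i$-th factor in (\ref{Xi-Phi}) drops out. Your treatment is in fact more complete than the paper's brief proof: you dispose of the $t_d$ case explicitly via $\Xi\Phi(t_d(\Pol))=t_d$, and you handle all four sign cases in the hypothesis (invoking pointedness of $\Eff(X)$ for the mixed cases), whereas the paper only writes out the case $d\in\overline\cW\cap\overline{\cW'}$.
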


\begin{proof}
For  any $i$ such that $i\in \Pol(\cW)$ but $i\not\in \Pol(\cW')$, the inequalities $\la \chi_i , \cW \ra \geq 0$ and $\la \chi_i, \cW' \ra \leq 0$ hold simultaneously. 
Hence if $d\in \overline\cW \cap \overline\cW'$, we must have $\la \chi_i , d \ra = 0$. 
The contribution from the $i$-th summand to $r_d (\Pol[\cW])$ is then trivial, which coincides the contribution from the $i$-th summand to $r_d (\Pol[\cW'])$. 
\end{proof}

\begin{Definition} \label{defn-br_d}

For any $d \in \Eff(X) \cup (-\Eff(X))$, choose an effective cochamber $\cW \subset \Eff(X)$ such that $d\in \overline\cW \cup (- \overline\cW)$. 
Define the elements 
$$
\br_d := \Xi  \Phi (r_d (\Pol[\cW]))  \quad \in \quad \cA_\sT (\sK, \sN)_\loc .
$$
For $d \not\in \Eff(X) \cup (-\Eff(X))$, define $\br_d := r_d$. 
\end{Definition} 


\begin{Proposition} \label{prop-mix}

Denote $D_i := \langle \chi_i, d \rangle$, $C_i := \langle \chi_i, c \rangle$. 

\begin{enumerate}[1)]
\setlength{\parskip}{1ex}
\item For any $d\in \Eff(X)$, we have the explicit presentations
$$
\br_d =  r_d \cdot \prod_{i: D_i < 0} \left[ (- q^{1/2} \hbar^{-1/2} )^{D_i} \frac{(\hbar  x_i)_{D_i}}{(q x_i )_{D_i} } \right]^{-1}  , \quad 
\br_{-d} = \prod_{i: D_i < 0} \left[ (- q^{1/2} \hbar^{-1/2} )^{D_i} \frac{(\hbar x_i)_{D_i}}{(q x_i )_{D_i} } \right]^{-1} \cdot r_{-d} .
$$

\item The anti-automorphism $\tau$ of $\cA_\sT (\sK, \sN)_\loc$ sends $\tau(\br_d) = \br_{-d}$.

\item If $c, d\in \Eff(X)$, then 
\begin{equation} \label{ab-rel-0-eff}
\br_c \cdot \br_d = \br_{c+d} , \qquad \br_{-c} \cdot \br_{-d} = \br_{-c-d}. 
\end{equation}

\item For any $d\in \Eff(X)$, we have 
\begin{equation} \label{ab-rel-eff}
\br_{d} \cdot \br_{-d} = \prod_{i=1}^n \left[ (- q^{1/2} \hbar^{-1/2} )^{-D_i} \frac{(\hbar x_i)_{ -D_i } }{(q x_i)_{-D_i } } \right] , \quad  
\br_{-d} \cdot \br_{d} = \prod_{i=1}^n \left[ (- q^{1/2} \hbar^{-1/2} )^{D_i} \frac{(\hbar x_i)_{ D_i } }{(q x_i)_{D_i} } \right]^{-1} . 
\end{equation}

\item Let $\cC\subset \Eff(X)$ be a $k$-dimensional cone.
The localized virtual Coulomb branch $\cA_\sT (\sK, \sN)_\loc$ is generated by $\{ \br_d \mid d\in \overline\cC \cup (-\overline\cC) \}$ over $K_{\sK \times \sT \times \bC^*_\hbar \times \bC^*_q} (\pt)_\loc$.

\item For $d\in \Eff(X)$, the action of $\br_{\pm d}$ on $t_c$ is
$$
t_c \cdot \br_d = t_{c+d}, \qquad t_c \cdot \br_{-d} = t_{c-d} \cdot \prod_{i=1}^n (-q^{1/2} \hbar^{-1/2} )^{-D_i} \frac{(\hbar x_i)_{-D_i} }{ (qx_i)_{-D_i}}.
$$

\end{enumerate}
\end{Proposition}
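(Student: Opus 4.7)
The plan is to derive Part 1 directly from the explicit formula (\ref{Xi-Phi}) and the defining identity $\br_d = \Xi \Phi(r_d(\Pol[\cW]))$, then bootstrap to the remaining parts. For a cochamber $\cW \subset \Eff(X)$ with $d \in \overline\cW$, the definition (\ref{loc-eff-pol}) of $\Pol[\cW]$ gives the key dichotomy $i \notin \Pol[\cW] \Leftrightarrow D_i < 0$, since the linear functional $\la\chi_i, \cdot\ra$ cannot change sign on a cochamber. Substituting into (\ref{Xi-Phi}) with $e=d$ (so $\epsilon(D_i) = -1$ for the contributing indices) and with $e=-d$ (so $\epsilon(-D_i) = +1$) produces exactly the two formulas claimed in Part 1. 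Part 2 then follows by applying the anti-automorphism $\tau$ of Lemma \ref{anti-isom} to $\br_d = r_d \cdot A_d$, where $A_d := \prod_{i: D_i<0} [(-q^{1/2}\hbar^{-1/2})^{D_i}(\hbar x_i)_{D_i}/(qx_i)_{D_i}]^{-1}$ lies in the coefficient ring: since $\tau$ fixes this ring and swaps $r_d \leftrightarrow r_{-d}$, one obtains $\tau(\br_d) = A_d \cdot r_{-d} = \br_{-d}$.

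For Parts 3 and 4, I would substitute the Part 1 formula, invoke Proposition \ref{thm-r_d} to rewrite $r_c \cdot r_d$, and commute all coefficient factors past the $r_e$'s using Lemma \ref{shift-r-x}. After collecting by the index $i$, the required identity decouples into an $i$-by-$i$ check, each of which reduces to a Pochhammer identity of the shape $(y)_{A+B} = (y)_A \cdot (q^A y)_B$, split into the four sign cases of $(C_i, D_i)$. The second equation in Part 3 follows from the first by applying $\tau$ and using Part 2; the two identities in Part 4 use the special case (\ref{ab-rel-special}) of Proposition \ref{thm-r_d} analogously.

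Part 5 is immediate from Part 1: each $\br_d$ differs from $r_d$ by an invertible element of $K_{\sK \times \sT \times \bC^*_\hbar \times \bC^*_q}(\pt)_\loc$, so the two families generate the same subalgebra, and Proposition \ref{thm-r_d}(2) combined with the fact that $\overline\cC \cup (-\overline\cC)$ spans $\cochar(\sK)$ as a semigroup (since $\cC$ is $k$-dimensional) finishes the argument. For Part 6, the cleanest route is to pick $\cW$ with $d \in \overline\cW$, write $\br_d = \Xi\Phi(r_d(\Pol[\cW]))$, and use that $\Xi \circ \Phi$ is a right-module isomorphism with $\Xi\Phi(t_c(\Pol[\cW])) = t_c$. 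The polarized analogue of Proposition \ref{Prop-right-mod} for $\Pol[\cW]$ then yields $t_c \cdot \br_d = t_{c+d}$ for $d \in \Eff(X)$ (no contributing indices) and the stated product for $t_c \cdot \br_{-d}$ (all indices with $D_i \neq 0$ contributing). The main obstacle throughout is the case analysis in Parts 3 and 4: even though each case collapses to a Pochhammer identity, tracking the shifts from Lemma \ref{shift-r-x} together with the coefficient $B_{c,d}$ from Proposition \ref{thm-r_d} requires careful bookkeeping, particularly when $C_i$ and $D_i$ have opposite signs.
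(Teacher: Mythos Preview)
Your proposal is correct, and for Parts 1, 2, 4, 5, 6 it matches the paper's approach closely (the paper simply declares these ``straightforward from the definition of $\br_d$, the explicit presentations of $\cA_\sT(\sK,\sN)$ and the map $\Xi\Phi$'', which is exactly what you unpack).

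For Part 3 you take a genuinely different route. The paper lifts $\br_c$ and $\br_d$ through the quantum Hamiltonian reduction (\ref{Ham-red}) to the abelian point $\cA_\sT(\tilde\sK,\sN)$ of Example~\ref{ab-pt}, writes each lift as a monomial in the elementary generators $\tilde r_{\pm b_i}$ times a Pochhammer coefficient, and then reduces the identity to repeated applications of the simple relations (\ref{ab-pt-3}); the case analysis there has six branches, since the opposite-sign cases split further by the sign of $C_i+D_i$. Your approach stays in $\cA_\sT(\sK,\sN)$ and uses the general product formula (\ref{ab-rel}) directly, which avoids the extra lifting step but forces you to manipulate the more complicated coefficient $B_{c,d}$ involving $\delta(C_i,D_i)$ together with the $q$-shifts from Lemma~\ref{shift-r-x}. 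Both routes work; the paper's has the advantage that each case boils down to a single application of (\ref{ab-pt-3}), while yours keeps the argument self-contained but needs the same six-way split (your ``four sign cases'' undercounts: when $C_i$ and $D_i$ have opposite signs you must also branch on $\operatorname{sgn}(C_i+D_i)$, since the formula for $\br_{c+d}$ in Part~1 depends on it).
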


\begin{proof}
1), 4) and 6) are straightforward from the definition of $\br_d$, the explicit presentations of $\cA_\sT (\sK, \sN)$ and the map $\Xi \Phi$.
2) follows from Corollary \ref{anti-isom}. 
5) follows from the observation that $\cA_\sT (\sK, \sN)$ is generated by $r_d$'s for $d\in \overline\cW \cup (-\overline\cW)$, where $\cW$ is any cochamber. 

It remains to prove 3), which follows from a tedious but straightforward calculation. 
We will adopt the quantum Hamiltonian reduction approach as described in Example \ref{ab-pt}. 
Let $c, d\in \Eff(X)$. 
Then the product $\br_c \cdot \br_d$ can be computed via their lifts
\ben
&& \tilde r_{\iota (c)} \prod_{i: C_i < 0} (- q^{1/2} \hbar^{-1/2} )^{-C_i} \frac{(q  \tilde x_i)_{C_i}}{(\hbar \tilde x_i )_{C_i} } \cdot \tilde r_{\iota(d)} \cdot \prod_{i: D_i < 0} (- q^{1/2} \hbar^{-1/2} )^{-D_i} \frac{(q  \tilde x_i)_{D_i}}{(\hbar \tilde x_i )_{D_i} }  \\
&=& \prod_{i: C_i>0} \tilde r_{b_i}^{C_i} \prod_{i: C_i<0} \tilde r_{-b_i}^{-C_i} \prod_{i: C_i < 0} (- q^{1/2} \hbar^{-1/2} )^{-C_i} \frac{(q  \tilde x_i)_{C_i}}{(\hbar \tilde x_i )_{C_i} } \\
&& 
\cdot \prod_{i: D_i>0} \tilde r_{b_i}^{D_i} \prod_{i: D_i<0} \tilde r_{-b_i}^{-D_i} \prod_{i: D_i < 0} (- q^{1/2} \hbar^{-1/2} )^{-D_i} \frac{(q  \tilde x_i)_{D_i}}{(\hbar \tilde x_i )_{D_i} } . 
\een
Let us look at the contribution from the $i$-th factor, which falls into six cases (the case that either $C_i =0$ or $D_i=0$ is trivial).
\begin{enumerate}[(i)]
\setlength{\parskip}{1ex}

\item $C_i, D_i >0$. 
Then $\tilde r_{b_i}^{C_i} \tilde r_{b_i}^{D_i} = \tilde r_{b_i}^{C_i +D_i}$. 

\item $C_i, D_i <0$. 
Then $\tilde r_{-b_i}^{-C_i} \cdot (- q^{1/2} \hbar^{-1/2} )^{-C_i} \dfrac{(q  \tilde x_i)_{C_i}}{(\hbar \tilde x_i )_{C_i} } \cdot \tilde r_{-b_i}^{-D_i} \cdot (- q^{1/2} \hbar^{-1/2} )^{-D_i} \dfrac{(q  \tilde x_i)_{D_i}}{(\hbar \tilde x_i )_{D_i} } = \tilde r_{-b_i}^{-C_i-D_i} \cdot (- q^{1/2} \hbar^{-1/2} )^{-C_i-D_i} \dfrac{(q  \tilde x_i)_{C_i+D_i}}{(\hbar \tilde x_i )_{C_i + D_i} } $. 

\item $C_i >0>D_i$, $C_i + D_i >0$. 
Then $\tilde r_{b_i}^{C_i} \tilde r_{-b_i}^{-D_i} (- q^{1/2} \hbar^{-1/2} )^{-D_i} \dfrac{(q  \tilde x_i)_{D_i}}{(\hbar \tilde x_i )_{D_i} } = \tilde r_{b_i}^{C_i +D_i} \cdot \tilde r_{b_i}^{-D_i} \tilde r_{-b_i}^{-D_i} \cdot (- q^{1/2} \hbar^{-1/2} )^{-D_i} \dfrac{(q  \tilde x_i)_{D_i}}{(\hbar \tilde x_i )_{D_i} }$. 
By (\ref{ab-pt-3}), $\tilde r_{b_i}^{-D_i} \tilde r_{-b_i}^{-D_i} = (-q^{1/2} \hbar^{-1/2})^{D_i} \dfrac{(\hbar \tilde x_i)_{D_i}}{(q \tilde x_i)_{D_i} }$. 
Therefore we have $\tilde r_{b_i}^{C_i +D_i}$. 

\item $C_i >0> D_i$, $C_i + D_i <0$. 
Similar to (iii), $\tilde r_{b_i}^{C_i} \tilde r_{-b_i}^{C_i} \cdot \tilde r_{-b_i}^{-C_i -D_i} \cdot (- q^{1/2} \hbar^{-1/2} )^{-D_i} \dfrac{(q  \tilde x_i)_{D_i}}{(\hbar \tilde x_i )_{D_i} }$.
Again $\tilde r_{b_i}^{C_i} \tilde r_{-b_i}^{C_i} = (-q^{1/2} \hbar^{-1/2})^{-C_i} \dfrac{(\hbar \tilde x_i)_{-C_i}}{(q \tilde x_i)_{-C_i} }$. 
We have $\tilde r_{-b_i}^{-C_i -D_i} \cdot (- q^{1/2} \hbar^{-1/2} )^{-C_i - D_i} \dfrac{(q  \tilde x_i)_{C_i+ D_i}}{(\hbar \tilde x_i )_{C_i + D_i} }$. 

\item $C_i <0<D_i$, $C_i + D_i >0$. 
Then $\tilde r_{-b_i}^{-C_i} \cdot  (- q^{1/2} \hbar^{-1/2} )^{-C_i} \dfrac{(q  \tilde x_i)_{C_i}}{(\hbar \tilde x_i )_{C_i} } \cdot \tilde r_{b_i}^{D_i} = \tilde r_{-b_i}^{-C_i} \tilde r_{b_i}^{D_i} \cdot  (- q^{1/2} \hbar^{-1/2} )^{-C_i} \dfrac{(q^{D_i} q  \tilde x_i)_{C_i}}{(q^{D_i} \hbar \tilde x_i )_{C_i} } = \tilde r_{-b_i}^{-C_i} \tilde r_{b_i}^{-C_i} \cdot \tilde r_{b_i}^{C_i + D_i} \cdot  (- q^{1/2} \hbar^{-1/2} )^{-C_i} \dfrac{(q^{D_i} q  \tilde x_i)_{C_i}}{(q^{D_i} \hbar \tilde x_i )_{C_i} }$. 
By (\ref{ab-pt-3}), $\tilde r_{-b_i}^{-C_i} \tilde r_{b_i}^{-C_i} =  (- q^{1/2} \hbar^{-1/2} )^{C_i} \dfrac{(q  \tilde x_i)_{-C_i}}{(\hbar \tilde x_i )_{-C_i} }$. 
We have $\tilde r_{b_i}^{C_i +D_i}$. 

\item $C_i <0<D_i$, $C_i + D_i <0$. 
Simillar to (v), $\tilde r_{-b_i}^{-C_i - D_i} \cdot \tilde r_{-b_i}^{D_i} \tilde r_{b_i}^{D_i} \cdot  (- q^{1/2} \hbar^{-1/2} )^{-C_i} \dfrac{(q^{D_i} q  \tilde x_i)_{C_i}}{(q^{D_i} \hbar \tilde x_i )_{C_i} }$. 
Again $\tilde r_{-b_i}^{D_i} \tilde r_{b_i}^{D_i} =  (- q^{1/2} \hbar^{-1/2} )^{-D_i} \dfrac{(q  \tilde x_i)_{D_i}}{(\hbar \tilde x_i )_{D_i} }$. 
We have $\tilde r_{-b_i}^{-C_i -D_i} \cdot (- q^{1/2} \hbar^{-1/2} )^{-C_i - D_i} \dfrac{(q  \tilde x_i)_{C_i+ D_i}}{(\hbar \tilde x_i )_{C_i + D_i} }$. 
\end{enumerate}
Collecting the results (i)-(v), we get the lift of $\br_{c+d}$. 
The case for $\br_{-c} \cdot \br_{-d}$ follows from 2). 
\end{proof}

\begin{Remark}
3) and 4) are not satisfied by the generators $r_d$ before taking the mixed polarization. 
They only commute if $c,d$ lie in the same \emph{cochamber}.
\end{Remark}

\begin{Remark}
Although the definition of $\br_{\pm d}$'s depends on the particular Higgs branch $X$, the product $\br_d \cdot \br_{-d}$, $\br_{-d} \cdot \br_d$ and $t_c \cdot \br_{\pm d}$ only depend on the $\sK$-representation $\sN$, or on the virtual Coulomb branch $\cA_\sT(\sK, \sN)_\loc $ itself. 
\end{Remark}

\subsection{Valuation-like ring} \label{Sec-val}

Up to now, in order to mix the polarizations, we used the isomorphisms introduced in Section \ref{sec-vary-pol}, which requires passing to the localized ring. 
The resulting Cartan subalgebra $K_{\sK \times \sT \times \bC^*_\hbar \times \bC^*_q} (\pt)_\loc$ is indeed a field, which is not enough for considering the Verma modules. 
We need to restrict to certain subalgebras for that purpose. 

Recall the presentation $K_{\sT \times\bC^*_\hbar} (X) \cong \bC[a^{\pm 1}, \hbar^{\pm 1}, s^{\pm 1}] / \bigcap_{\bp\in X^\sT} I(\bp)$ in (\ref{K-ring-2}), where the ideal
$$
I (\bp) : = \la 1 - x_j , j\in \bp^+; 1 - \hbar x_j, j \in \bp^- \ra .
$$
Geometrically, $\Spec K_{\sT \times\bC^*_\hbar} (X) = \bigcup_{\bp \in X^\sT} \Spec K_{\sT \times\bC^*_\hbar} (\pt)$ embeds into the torus $\Spec K_{\sK \times \sT \times \bC^*_\hbar \times \bC^*_q} (\pt) \cong (\bC^*)^{k+n+2}$, where each $\Spec K_{\sT \times\bC^*_\hbar} (\pt) \cong (\bC^*)^{n+1}$ is an irreducible component.

\begin{Definition} \label{defn-val-ring}
Consider the following valuation-like rings. \footnote{The definition looks more natural if we view $q\in \bC$ as a constant. Then $K_{\sK \times \sT \times \bC^*_\hbar} (\pt)_{\bp}$ is the local ring of $\Spec K_{\sK \times \sT \times \bC^*_\hbar} (\pt)$ at the irreducible component $\Spec K_{\sT \times\bC^*_\hbar} (\pt)$, and $K_{\sT \times\bC^*_\hbar} (X)$ is the \emph{semilocal} ring at the subvariety $\Spec K_{\sT \times\bC^*_\hbar} (X)$.}
\begin{enumerate}[1)]
\setlength{\parskip}{1ex}

\item Define $K_{\sK \times \sT \times \bC^*_\hbar \times \bC^*_q} (\pt)_{\bp}$ the localization of $K_{\sK \times \sT \times \bC^*_\hbar \times \bC^*_q} (\pt)$ at the multiplicative subset 
$$
K_{\sK \times \sT \times \bC^*_\hbar \times \bC^*_q} (\pt) - \bigcup_{N_j , N'_j \in \bZ} \la 1 -q^{N_j} x_j , j\in \bp^+; 1 - q^{N'_j} \hbar x_j, j \in \bp^- \ra.
$$
There is an evaluation map $(-)|_\bp : K_{\sK \times \sT \times \bC^*_\hbar \times \bC^*_q} (\pt)_{\bp} \to K_{\sT \times \bC^*_\hbar \times \bC^*_q} (\bp)_\loc$, defined by the quotient at the maximal ideal $\la 1 - x_j , j\in \bp^+; 1 - \hbar x_j, j \in \bp^- \ra$. 

\item Define $K_{\sK \times \sT \times \bC^*_\hbar \times \bC^*_q} (\pt)_X$ as the localization of $K_{\sK \times \sT \times \bC^*_\hbar \times \bC^*_q} (\pt)$ at the multiplicative subset 
$$
K_{\sK \times \sT \times \bC^*_\hbar \times \bC^*_q} (\pt) - \bigcup_{\bp\in X^\sT} \bigcup_{N_j , N'_j \in \bZ} \la 1 -q^{N_j} x_j , j\in \bp^+; 1 - q^{N'_j} \hbar x_j, j \in \bp^- \ra.
$$
There is an evaluation map $(-)|_X : K_{\sK \times \sT \times \bC^*_\hbar \times \bC^*_q} (\pt)_X \to K_{\sT \times \bC^*_\hbar \times \bC^*_q} (X)_\loc$, defined by the quotient at the intersection of ideals $\bigcap_{\bp \in X^\sT} \la 1 - x_j , j\in \bp^+; 1 - \hbar x_j, j \in \bp^- \ra$. 

\end{enumerate}

\end{Definition}
Equivalently, $K_{\sK \times \sT \times \bC^*_\hbar \times \bC^*_q} (\pt)_X = \bigcap_{\bp \in X^\sT} K_{\sK \times \sT \times \bC^*_\hbar \times \bC^*_q} (\pt)_{\bp}$ is the intersection of the subrings in the fraction field $K_{\sK \times \sT \times \bC^*_\hbar \times \bC^*_q} (\pt)_\loc$.
The restriction of the maps $(-)|_\bp$, $(-)|_X$ to $K_{\sK \times \sT \times \bC^*_\hbar \times \bC^*_q} (\pt)$ is the usual Kirwan surjection.
Note that it cannot be defined for the field $K_{\sK \times \sT \times \bC^*_\hbar \times \bC^*_q} (\pt)_\loc$. 

\begin{Definition} \label{Defn-cA-X}
\begin{enumerate}[1)]
\setlength{\parskip}{1ex}

\item Define $\cA_{\sT} (\sK, \sN)_\bp$ as the subalgebra of $\cA_\sT (\sK, \sN)_\loc$ generated by $K_{\sK \times \sT \times \bC^*_\hbar \times \bC^*_q} (\pt)_{\bp}$ and $\br_d$ for all $d \in \Eff(\bp) \cup (-\Eff(\bp))$. 

\item Define $\cA_\sT (\sK, \sN)_X$ as the subalgebra of $\cA_\sT (\sK, \sN)_\loc$ generated by $K_{\sK \times \sT \times \bC^*_\hbar \times \bC^*_q} (\pt)_X$ and $\br_d$ for all $d \in \Eff(X) \cup (-\Eff(X))$.
\end{enumerate}

\end{Definition} 

The original virtual Coulomb branch $\cA_\sT (\sK, \sN)$ is graded by the lattice $\cochar (\sK)$:
$$
\cA_\sT (\sK, \sN) = \bigoplus_{d\in \cochar (\sK)} \cA_\sT^{d} (\sK, \sN), 
$$ 
where $\cA^{d}_\sT (\sK, \sN)$ is generated by the single element $r_d$. 
$\cA^{0}_\sT (\sK, \sN) = K_{\sK \times \sT \times \bC^*_\hbar \times \bC^*_q} (\pt)_\sN$ is the Cartan subalgebra. 
Same holds for the localized version $\cA_\sT^d (\sK, \sN)_\loc$, which is also generated by $\br_d$.
The subalgebras $\cA_{\sT} (\sK, \sN)_\bp$, $\cA_\sT (\sK, \sN)_X$ inherit the grading. 

\begin{Lemma} \label{br_d-generator}
If $d\in \Eff(X) \cup (-\Eff(X))$, then $\cA^d_{\sT} (\sK, \sN)_\bp \subset K_{\sK \times \sT \times \bC^*_\hbar \times \bC^*_q} (\pt)_\bp \cdot \br_d$, and $\cA^d_{\sT} (\sK, \sN)_X \cong K_{\sK \times \sT \times \bC^*_\hbar \times \bC^*_q} (\pt)_X \cdot \br_d$. 
\end{Lemma}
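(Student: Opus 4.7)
The plan is to exhibit $\cA^d_\sT(\sK, \sN)_\bp$ as a cyclic $K_{\sK\times \sT \times \bC^*_\hbar \times \bC^*_q}(\pt)_\bp$-module with generator $\br_d$. The starting observation is that $\cR = \bigsqcup_{d\in \cochar(\sK)} \pi^{-1}([z^d])$ decomposes into connected components, hence $\cA_\sT(\sK, \sN) = \bigoplus_d K_{\sK \times \sT \times \bC^*_\hbar \times \bC^*_q}(\pt)_\sN \cdot r_d$ is free of rank one in each degree over the Cartan. Because $\br_d/r_d$ is a unit in $K_{\sK \times \sT \times \bC^*_\hbar \times \bC^*_q}(\pt)_\loc$ by Proposition~\ref{prop-mix}(1) and Definition~\ref{defn-br_d}, one also has $\cA^d_\sT(\sK, \sN)_\loc = K_{\sK \times \sT \times \bC^*_\hbar \times \bC^*_q}(\pt)_\loc \cdot \br_d$. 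Consequently $\cA^d_\sT(\sK, \sN)_\bp = I_d \cdot \br_d$ for a unique $K_{\sK \times \sT \times \bC^*_\hbar \times \bC^*_q}(\pt)_\bp$-submodule $I_d \subset K_{\sK \times \sT \times \bC^*_\hbar \times \bC^*_q}(\pt)_\loc$, and the lemma reduces to showing $I_d = K_{\sK \times \sT \times \bC^*_\hbar \times \bC^*_q}(\pt)_\bp$; freeness of the resulting cyclic module is then automatic since $\br_d \neq 0$ and the Cartan injects into a field.

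To prove $\br_d \in \cA_\sT(\sK, \sN)_\bp$ (giving $I_d \supseteq K_{\sK \times \sT \times \bC^*_\hbar \times \bC^*_q}(\pt)_\bp$), the case $d \in \Eff(\bp) \cup (-\Eff(\bp))$ is by definition, and the anti-automorphism $\tau$ of Lemma~\ref{anti-isom} (which sends $\br_d \mapsto \br_{-d}$ and preserves $K_{\sK \times \sT \times \bC^*_\hbar \times \bC^*_q}(\pt)_\bp$) reduces the remainder to $d \in \Eff(X) \setminus \Eff(\bp)$. Here I will decompose $d = d_+ - d_-$, where $d_\pm$ are the positive and negative parts of $d$ in the simplicial basis of $\Eff(\bp)$ dual to the facets of $\fK(\bp)$; both $d_\pm$ lie in $\Eff(\bp)$ and have disjoint support in that basis. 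Computing $\br_{d_+} \, \br_{-d_-}$ via the abelian-point lift (Example~\ref{ab-pt}) together with Hamiltonian reduction~(\ref{Ham-red}), as in the proof of Proposition~\ref{prop-mix}(3), the product collapses to $u \cdot \br_d$ with $u \in K_{\sK \times \sT \times \bC^*_\hbar \times \bC^*_q}(\pt)_\loc$ an explicit product of Pochhammer factors. Disjointness of supports forces $\delta_i = 0$ for every $i \in \bp^+ \cup \bp^- = \bp$, so only indices $i \notin \bp$ contribute to $u$; for such $i$, both $1 - q^\bZ x_i$ and $1 - q^\bZ \hbar x_i$ are inverted in $K_{\sK \times \sT \times \bC^*_\hbar \times \bC^*_q}(\pt)_\bp$, making $u$ a unit there and placing $\br_d = u^{-1}\br_{d_+}\br_{-d_-}$ in $\cA_\sT(\sK, \sN)_\bp$.

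The reverse inclusion $I_d \subseteq K_{\sK \times \sT \times \bC^*_\hbar \times \bC^*_q}(\pt)_\bp$ is proved by induction on word length: every element of $\cA^d_\sT(\sK, \sN)_\bp$ is a $K_{\sK \times \sT \times \bC^*_\hbar \times \bC^*_q}(\pt)_\bp$-combination of products $\br_{c_1}\cdots \br_{c_k}$ with $c_i \in \Eff(\bp) \cup (-\Eff(\bp))$ and $\sum c_i = d$. Adjacent same-sign pairs collapse by Proposition~\ref{prop-mix}(3); opposite-sign pairs $\br_c\br_{-e}$ collapse via the abelian-point lift to $u \cdot \br_{c-e}$, and the same sign-index bookkeeping as above (combined with rewriting $c - e$ in minimum-coordinate form when necessary) ensures the coefficient lies in $K_{\sK \times \sT \times \bC^*_\hbar \times \bC^*_q}(\pt)_\bp$. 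Lemma~\ref{shift-r-x} allows one to commute intermediate coefficients past the remaining $\br_{c_j}$'s, and the inverted families $1 - q^\bZ x_i$, $1 - q^\bZ \hbar x_i$ defining $K_{\sK \times \sT \times \bC^*_\hbar \times \bC^*_q}(\pt)_\bp$ are stable under the $q$-shifts involved, so admissibility is preserved. The induction terminates with $g \cdot \br_d$ for some $g \in K_{\sK \times \sT \times \bC^*_\hbar \times \bC^*_q}(\pt)_\bp$. Finally the $X$-statement follows at once by intersecting over all $\bp \in X^\sT$, using that $\br_d$ is a common $K_{\sK \times \sT \times \bC^*_\hbar \times \bC^*_q}(\pt)_\loc$-basis and that $K_{\sK \times \sT \times \bC^*_\hbar \times \bC^*_q}(\pt)_X = \bigcap_\bp K_{\sK \times \sT \times \bC^*_\hbar \times \bC^*_q}(\pt)_\bp$ by Definition~\ref{defn-val-ring}. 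The main obstacle will be the combinatorial verification at the coefficient level, namely isolating the minimum-coordinate decomposition as the one that forces all ``forbidden'' inverses $(1 - q^a x_j)^{-1}$ with $j \in \bp^+$ and $(1 - q^a \hbar x_j)^{-1}$ with $j \in \bp^-$ to vanish by the $\delta_j = 0$ mechanism; this is the only place where the sign structure $\bp = \bp^+ \sqcup \bp^-$ is genuinely used.
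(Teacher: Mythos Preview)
There is a genuine gap in your step showing $\br_d\in\cA_\sT(\sK,\sN)_\bp$ for $d\in\Eff(X)\setminus\Eff(\bp)$. In the identity $\br_{d_+}\br_{-d_-}=u\cdot\br_d$, you argue that disjoint supports force $\delta_j=0$ for $j\in\bp$, so only indices $i\notin\bp$ contribute to $u$. The $\delta_j=0$ claim is correct and kills the contribution coming from the $r$-product formula (Proposition~\ref{thm-r_d}), but you have overlooked the polarization corrections relating $\br_d$ to $r_d$ from Proposition~\ref{prop-mix}(1): these contribute at every $j\in\bp^+$ with $D_j<0$ (and $j\in\bp^-$ with $D_j>0$), and the resulting factor $(-q^{1/2}\hbar^{-1/2})^{D_j}\frac{(\hbar x_j)_{D_j}}{(qx_j)_{D_j}}$ is \emph{not} a unit in $K_{\sK\times\sT\times\bC^*_\hbar\times\bC^*_q}(\pt)_\bp$. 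For instance, in Example~\ref{A_2} with $\bp=\bp_{12}$ and $d=\rho_3$, one computes $\br_{\rho_3}=r_{\rho_3}\cdot(-q^{1/2}\hbar^{-1/2})\frac{1-q^{-1}\hbar x_2}{1-x_2}$, and no product of generators of $\cA_\sT(\sK,\sN)_{\bp_{12}}$ can produce the pole at $x_2=1$: neither $K_{\sK\times\sT\times\bC^*_\hbar\times\bC^*_q}(\pt)_{\bp_{12}}$, nor the generators $\br_c$ with $c\in\pm\Eff(\bp_{12})$, nor the structure constants at index $2\in\bp^+$ ever introduce $(1-q^a x_2)^{-1}$. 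So $\br_d\notin\cA_\sT(\sK,\sN)_\bp$ in general, and the equality $I_d=K_{\sK\times\sT\times\bC^*_\hbar\times\bC^*_q}(\pt)_\bp$ you aim for fails.

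The paper's proof in fact establishes only the inclusion $\cA^d_\sT(\sK,\sN)_\bp\subseteq K_{\sK\times\sT\times\bC^*_\hbar\times\bC^*_q}(\pt)_\bp\cdot\br_d$, and that is all that is used downstream (in Lemma~\ref{vanishing}). Its argument is shorter than your word-length induction: for $c,d\in\Eff(\bp)$ with $d-c\in\Eff(X)$, write $\br_d=\br_c\br_{d-c}$ by Proposition~\ref{prop-mix}(3), so $\br_{-c}\br_d=(\br_{-c}\br_c)\br_{d-c}$, and check directly from Proposition~\ref{prop-mix}(4) together with the sign constraints defining $\Eff(\bp)$ that $\br_{-c}\br_c\in K_{\sK\times\sT\times\bC^*_\hbar\times\bC^*_q}(\pt)_\bp$. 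Your induction also yields this inclusion; the gap is confined to the reverse containment, which you should simply drop.
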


\begin{proof}
For any $c, d\in \Eff(\bp)$, since $\br_c \br_d = \br_{c+d}$ and $\br_{-c} \br_{-d} = \br_{-c-d}$, it suffices to prove that $\br_{-c} \br_d , \br_d \br_{-c} \in K_{\sK \times \sT \times \bC^*_\hbar \times \bC^*_q} (\pt)_\bp \cdot \br_d$, under the assumption that $-c+d \in \Eff(X)$ (the other case $-c+d \in -\Eff(X)$ follows by using the anti-involution $\tau$). 
But we have $\br_{-c} \br_d = \br_{-c} \br_c \br_{-c+d}$ and $\br_d \br_{-c} = \br_{-c+d} \br_c \br_{-c}$. 
The result then follows from the fact that $\br_{-c} \br_c, \br_c \br_{-c} \in K_{\sK \times \sT \times \bC^*_\hbar \times \bC^*_q} (\pt)_\bp$ for $c\in \Eff(\bp)$. 
\end{proof}

\subsection{Verma modules associated with fixed points}

Let $\bp\in X^\sT$ be a fixed point and $\Eff(\bp)$ be its effective cone.
Recall that $S_j$, $1\leq j\leq k$, are tautological line bundles on $X$ associated with $s_j$'s. 

\begin{Definition} \label{Verma}
A \emph{highest weight vector} $v$ of the virtual Coulomb branch $\cA_\sT (\sK, \sN)_\bp$ with respect to $\bp$, is defined to be such that 
$$
s_j \cdot v = S_j|_\bp \cdot   v, \qquad \cA_\sT^{-d} (\sK, \sN)_\bp \cdot v = 0, \qquad \text{for all } 1\leq j\leq k, \ d \in \Eff(\bp) \backslash \{0\}. 
$$
The \emph{Verma module} $M(\bp)$ is defined as the left module
$$
M(\bp) := \cA_\sT (\sK, \sN)_\bp \otimes_{\bigoplus_{d\in \Eff(\bp)} \cA_\sT^{-d} (\sK, \sN)_\bp } \bC  v. 
$$
\end{Definition}

In particular, any element in the Cartan subalgebra $\cA^0_\sT (\sK, \sN)_\bp$ acts on $v$ via the evaluation map $(-)|_\bp$. 
Moreover, for any $d \in \Eff(\bp) \backslash \{0\}$, $\br_{d} \br_{-d} v = 0$. 
These two results are compatible since we always have $(\br_d \br_{-d}) |_\bp = 0$. 

\begin{Lemma} \label{vanishing}
\begin{enumerate}[1)]
\setlength{\parskip}{1ex}

\item $\cA^d_\sT (\sK, \sN)_\bp \cdot v = 0$ for any $d\not\in \Eff(\bp)$. 
As a result, $M(\bp)$ is a vector space over $K_{\sT \times \bC^*_\hbar \times \bC^*_q} (\bp)_\loc$, with a basis formed by $\br_d \cdot v$, $d\in \Eff(\bp)$. 

\item There exists a unique bilinear form $\langle \ ,\  \rangle$ on $M(\bp)$, valued in the residue field $K_{\sT \times \bC^*_q \times \bC^*_\hbar} (\bp)_\loc$, such that $\langle v, v \rangle = 1$, and is contravariant, in the sense that $\br_d$ is adjoint to $\br_{-d}$ for any $d\in \Eff(\bp)$. 

\end{enumerate}

\end{Lemma}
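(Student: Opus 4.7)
The plan is to establish a normal form for monomials in $\cA_\sT(\sK, \sN)_\bp$ acting on the highest weight vector $v$, and then to deduce both parts from it.

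First, I would observe that any element of $\cA_\sT(\sK, \sN)_\bp$ is a $K_{\sK\times\sT\times\bC^*_\hbar\times\bC^*_q}(\pt)_\bp$-linear combination of words $\br_{d_1}\cdots\br_{d_m}$ with $d_i\in \Eff(\bp)\cup(-\Eff(\bp))$, by the very definition of the subalgebra. Using 3) of Proposition \ref{prop-mix} to merge adjacent factors of the same sign, together with commutation relations $\br_a\br_b = \gamma_{a,b}\br_b\br_a$ obtained from comparing the two orderings in Proposition \ref{thm-r_d}, every word of total degree $d$ can be rewritten as $\alpha \cdot \br_{c}\br_{-c'}$ for some $c,c'\in \Eff(\bp)$ with $c-c' = d$ and some scalar $\alpha\in K_{\sK\times\sT\times\bC^*_\hbar\times\bC^*_q}(\pt)_\bp$.

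For part 1), given $d\notin \Eff(\bp)$, every such normal form must have $c'\neq 0$ (otherwise $d = c\in \Eff(\bp)$). Hence $\br_{-c'}v = 0$ by the highest-weight condition, so $\alpha\br_c\br_{-c'}v = 0$, proving $\cA^d_\sT(\sK, \sN)_\bp\cdot v = 0$. The same normal form, now applied with $d\in \Eff(\bp)$, shows that $M(\bp)$ is spanned over $K_{\sT\times\bC^*_\hbar\times\bC^*_q}(\bp)_\loc$ by $\{\br_d v\mid d\in \Eff(\bp)\}$: only the $c' = 0$ terms contribute, producing $\alpha \br_c v$ with $c = d\in \Eff(\bp)$. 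Linear independence will follow from Lemma \ref{shift-r-x}: the vector $\br_d v$ is a simultaneous $s_j$-eigenvector of eigenvalue $q^{\la e_j^*,d\ra}S_j|_\bp$, and distinct $d\in \Eff(\bp)$ yield distinct eigenvalues because $q$ is a free variable.

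For part 2), uniqueness is automatic from the basis together with contravariance: $\la\br_c v,\br_d v\ra = \la v,\br_{-c}\br_d v\ra$, which by part 1) vanishes unless $d-c\in \Eff(\bp)$; when $d-c\in \Eff(\bp)\setminus\{0\}$ it reduces to $\beta\la v,\br_{d-c}v\ra = \beta\la\br_{-(d-c)}v,v\ra = 0$ by applying contravariance once more and the highest-weight condition; and when $c = d$ it equals $(\br_{-d}\br_d)|_\bp$, an explicit element of the residue field given by (\ref{ab-rel-eff}). So the pairing is forced to be diagonal with these prescribed diagonal entries. Existence then follows by defining the pairing by this diagonal formula and checking contravariance on arbitrary products via the explicit scalar identities in Proposition \ref{prop-mix}.

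The main technical obstacle is verifying that the commutation scalars $\gamma_{a,b}$ from the normal-form step genuinely lie in the valuation-like subring $K_{\sK\times\sT\times\bC^*_\hbar\times\bC^*_q}(\pt)_\bp$ of Definition \ref{defn-val-ring}, rather than merely in the full fraction field $K_{\sK\times\sT\times\bC^*_\hbar\times\bC^*_q}(\pt)_\loc$. This requires a case analysis matching the Pochhammer factors produced by Proposition \ref{thm-r_d} against the specific denominators $\{1 - q^\bZ \hbar x_i\ (i \notin \bp^-),\ 1 - q^\bZ x_i\ (i\notin \bp^+)\}$ inverted in $K_{\sK\times\sT\times\bC^*_\hbar\times\bC^*_q}(\pt)_\bp$, exploiting the sign constraints that membership in $\Eff(\bp)$ places on $\la \chi_i, a\ra$ for $a\in \Eff(\bp)\cup(-\Eff(\bp))$.
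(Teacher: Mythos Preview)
Your overall strategy is sound and close to the paper's, but the normal-form step has a genuine gap. The commutation scalars $\gamma_{a,b}$ do \emph{not} all lie in the valuation-like ring $K_{\sK\times\sT\times\bC^*_\hbar\times\bC^*_q}(\pt)_\bp$, so you cannot push every word into the shape $\alpha\,\br_c\br_{-c'}$ with $\alpha$ in that ring. Concretely, take $j\in\bp^+$ and compare $\br_{-\rho_j}\br_{\rho_j}$ with $\br_{\rho_j}\br_{-\rho_j}$ using (\ref{ab-rel-eff}). The $i=j$ factor in the ratio $\gamma$ is
\[
\frac{(qx_j)_{1}(qx_j)_{-1}}{(\hbar x_j)_{1}(\hbar x_j)_{-1}}
=\frac{(1-qx_j)(1-q^{-1}\hbar x_j)}{(1-x_j)(1-\hbar x_j)},
\]
which contains $(1-x_j)^{-1}$; since $j\in\bp^+$, this denominator is \emph{not} inverted in $K_{\sK\times\sT\times\bC^*_\hbar\times\bC^*_q}(\pt)_\bp$ (Definition~\ref{defn-val-ring}). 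So $\gamma\notin K_{\sK\times\sT\times\bC^*_\hbar\times\bC^*_q}(\pt)_\bp$, and you cannot use the identity $\br_{-e}\br_f=\gamma\,\br_f\br_{-e}$ inside $\cA_\sT(\sK,\sN)_\bp$ to act on $M(\bp)$; passing to $\cA_\sT(\sK,\sN)_\loc$ is not an option because $M(\bp)$ is only a module over the subalgebra.

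The paper circumvents this by \emph{not} aiming for a global normal form. Using Lemma~\ref{br_d-generator} it reduces inductively to showing $\br_{-c}\br_d\,v=0$ for $c,d\in\Eff(\bp)$ with $-c+d\notin\Eff(\bp)$, and then further to $c=\rho_{j_0}$ a single basis vector of the simplicial cone $\Eff(\bp)$. The hypothesis $-\rho_{j_0}+d\notin\Eff(\bp)$ forces $D_{j_0}=\la\chi_{j_0},d\ra=0$, so $\br_d=\prod_{j\neq j_0}\br_{\rho_j}^{|D_j|}$ contains no $\br_{\rho_{j_0}}$. Hence the only commutations needed are $\br_{-\rho_i}\br_{\rho_j}=u\,\br_{\rho_j}\br_{-\rho_i}$ with $i\neq j$; for those, the pairings $\la\chi_l,\rho_i\ra$ and $\la\chi_l,\rho_j\ra$ vanish for every $l\in\bp$ except $l=i$ or $l=j$ respectively (never both), so $\delta_l=0$ for all $l\in\bp$ and the scalar $u$ involves only $x_l$ with $l\notin\bp$, hence is a unit in $K_{\sK\times\sT\times\bC^*_\hbar\times\bC^*_q}(\pt)_\bp$. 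That is the missing idea your case analysis would need: the constraint $-c+d\notin\Eff(\bp)$ is precisely what rules out the dangerous same-index commutations. Your treatment of the spanning, linear independence, and part~2) is fine once part~1) is established.
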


\begin{proof}
By definition, for $d\not\in \Eff(\bp)$, to prove $\cA^d_\sT(\sK, N)_\bp \cdot v = 0$, it suffices to prove that for any decomposition $d = \sum_{\alpha=1}^l d^{(\alpha)}$, with $d^{(\alpha)} \in \pm \Eff(\bp)$, $\br_{d^{(l)}} \cdots \br_{d^{(1)}} v = 0$. 
By Lemma \ref{br_d-generator}, taking $-c$ to be the first $d^{(\alpha)}$ lying in $-\Eff(\bp)$ and $d = d^{(1)} + \cdots + d^{(\alpha-1)}$, it then suffices to prove that for $c, d \in \Eff(\bp)$, $-c+d\not\in \Eff(\bp)$, $\br_{-c} \br_d v = 0$. 

Let $\{\rho_j \mid j\in \bp\}$ be the $\bZ$-basis of $\Eff(\bp)$, i.e. satisfying $\la \chi_j ,\rho_i \ra = \pm \delta_{ij}$, for any $j\in \bp^\pm$. 
Then $c$ can be decomposed into a sum of $\rho_j$'s, and it suffices to prove the case where $c = \rho_{j_0}$ for some $j_0 \in \bp$. 
In that case, write $\br_d = \prod_j \br_{\rho_j}^{|D_j|}$. 
We must have $D_{j_0} = 0$ since $-c+d \not\in \Eff(\bp)$. 

On the other hand, a direct computation shows that $\br_{-\rho_i} \br_{\rho_j} = u \cdot \br_{\rho_j} \br_{-\rho_i}$ whenever $i\neq j$, where $u$ is a rational function in $1 - q^m x_i$, for $i\not\in \bp$, $m\in \bZ$. 
In particular, $u$ is invertible in $K_{\sK \times \sT \times \bC^*_\hbar \times \bC^*_q} (\pt)_\bp$. 
Therefore, up to a unit, we can move $\br_{-\rho_{j_0}}$ to the rightmost and it annihilates $v$. 
\end{proof}

\subsection{Verma modules and quasimaps}

We now describe how the Verma module of the virtual Coulomb branch can be realized from quasimaps. 
 
Recall that $\QM(X)^\circ_d$ is the moduli space of quasimaps from $\bP^1$ to $X$ of degree $d$, where $\infty$ maps to the stable locus; we denote by $\QM(X; \bp)_d^\circ \subset \QM(X)_d^\sT$ its subspace of quasimaps into $\bp$, and $K_0^{\sT \times \bC_\hbar^* \times \bC^*_q} (\QM (X; \bp)^\circ_d )_\loc := K_0^{\sT \times \bC_\hbar^* \times \bC^*_q} (\QM (X; \bp)^\circ_d ) \otimes \bC(q)$.  

\begin{Theorem} \label{thm-verma}
Let $X$ be a hypertoric variety, $\bp \in X^\sT$ be a fixed point, and $\Eff(\bp)$ be its effective cone. 
There is a natural action of $\cA_\sT (\sK, \sN)_\bp$ on $
\bigoplus_{d\in \Eff(\bp)} K_0^{\sT \times \bC_\hbar^* \times \bC^*_q} (\QM (X; \bp)^\circ_d )_\loc$, 
realizing it as the Verma module $M(\bp)$. 

\end{Theorem}

\begin{proof}
First, let us specify the fixed loci of the moduli of quasimaps.
A $(\sT \times \bC^*_\hbar \times \bC^*_q)$-fixed point of $\QM(X)^\circ_d$ consists of the data $(L, s)$, where $L$ is a line bundle of degree $d$ on $\bP^1$, whose restriction to $\bP^1 - \{0\}$ is trivial, and $s|_{\bP^1 - \{0\}}$ represents the coordinates of a fixed point $\bp\in X^\sT$. 
Let $z_1, \cdots, z_n, w_1, \cdots, w_n$ denote the components of $s$ corresponding to coordinates in $T^*\sN$. 
Then by the stability condition, the section $s|_{\bP^1 - \{0\}}$ has nonzero components $z_j \neq 0$, $j\in \bp^+$ and $w_j \neq 0$, $j\in \bp^-$, and all other components vanish. 
Given $d\in \Eff(\bp)$, choose a cochamber $\cW$ such that $d\in \overline\cW$. 
By Definition \ref{loc-eff-pol}, the indices $\bp^+ \subset \Pol$ and $\bp^- \cap \Pol = \emptyset$; in other words, for the splitting $T^* \sN = \sN (\Pol) \oplus \hbar^{-1} \sN (\Pol)$, all components of $s|_{\bP^1 - \{0\}}$ corresonding to $ \hbar^{-1} \sN (\Pol)$ vanish.

Therefore, the restriction of a quasimap $f$ to the formal neighborhood at $0$ defines a map from $D = \Spec \bC[[z]]$ to $\fX$, i.e. a point in $\cT(\Pol[\cW])$. 
Let $\cT(\Pol[\cW]; \bp)^\circ \subset \cT(\Pol[\cW])$ be the open subspace where the section $s |_{D^*}$ has the same nonzero components as $s |_{\bP^1 - \{0\}}$.  
We then have 
$$
( \QM(X; \bp)^\circ_d)^{\sT \times \bC^*_\hbar \times \bC^*_q} \cong \bp \cong (\cT_{-d} (\Pol[\cW]; \bp)^\circ )^{\sK \times \sT \times \bC^*_\hbar \rtimes \bC^*_q} , 
$$
where the subscript in $\cT_{-d}$ means the component over $[z^{-d}]$. 
By localization,
\begin{eqnarray} \label{QM-cT}
&& \bigoplus_{d\in \Eff(X)} K_0^{\sT \times \bC_\hbar^* \times \bC^*_q} (\QM (X; \bp)^\circ_d )_\loc \\ 
&\cong& \bigoplus_{d\in \Eff(X)} K_{\sT \times \bC_\hbar^* \times \bC^*_q} (\bp)_\loc \cdot [z^{-d}] 
\nonumber \\
&\cong& \bigoplus_{d\in \Eff(X)} K_0^{\sK \times \sT \times \bC_\hbar^* \rtimes \bC^*_q} (\cT_{-d} (\Pol[\cW]; \bp)^\circ ) 
\nonumber \\
&\cong& \bigoplus_{d\in \Eff(X)} K_0^{\sK \times \sT \times \bC_\hbar^* \rtimes \bC^*_q} (\cT_{-d} (\Pol[\cW]; \bp) ) \otimes_{K_{\sK \times \sT \times \bC^*_q\times \bC^*_\hbar} (\pt)} K_{\sT \times \bC_\hbar^* \times \bC^*_q} (\bp) 
\nonumber .
\end{eqnarray}
The last identity is due to the fact that the $K$-theory restriction to the open subspace $(\cT_{-d} (\Pol[\cW]; \bp)^\circ$ coincides with the Kirwan map $\kappa: K_{\sK \times \sT \times \bC^*_q\times \bC^*_\hbar} (\pt) \twoheadrightarrow K_{\sT \times \bC_\hbar^* \times \bC^*_q} (\bp)$, which furthermore after localization, coincides with the evaluation map $(-)|_\bp$ in Definition \ref{defn-val-ring}.  
So it suffices to consider the $K$-theory of $\cT_{-d} (\Pol[\cW]; \bp)$, and then apply the evaluation map. 

Recall that there is a natural right module $\cM_\sT (\sK, \sN)_\bp$ over $\cA_\sT (\sK, \sN )_\bp$, built up exactly from the space $\cT (\Pol[\cW]; \bp)$.  
Consider the nondegenerate $\bC$-bilinear pairing $\langle \ ,  \ \ra$ on $\cM_\sT (\sK, \sN )_\bp$, defined as
$$
\la t_c, t_d \ra = \delta_{c, -d}, \qquad \text{for any } c, d\in \cochar (\sK). 
$$
We can then identify $\cM_\sT (\sK, \sN )_\bp$ with its restricted dual, which is naturally a left $\cA_\sT (\sK, \sN )_\bp$-module. 
By 6) of Proposition \ref{prop-mix}, this left action is
$$
\br_d \cdot t_c = t_{d+c}, \qquad \br_{-d} \cdot t_c = \prod_{i=1}^n (-q^{1/2} \hbar^{-1/2})^{-D_i} \frac{(\hbar x_i)_{-D_i} }{(q x_i)_{-D_i} } \cdot t_{-d+c},   
$$
for any $d\in \Eff(X)$, $c\in \cochar (\sK)$. 

Under the evaluation map $(-)|_\bp$, we see exactly that $\br_{-d} \cdot t_c = 0$, and $t_0$ is a highest weight vector in the sense of Definition \ref{Verma}. 
It generates all $t_d$ for $d\in \Eff(\bp)$, under the action of $\br_d$'s. 
The theorem then follows from the identification (\ref{QM-cT}). 
\end{proof}

\subsection{Vertex functions as Whittaker functions} \label{sec-q-diff-ab}

In this subsection, we define the Whittaker vector for the virtual Coulomb branch and express the vertex function as a Whittaker function. 
The similar idea already appears in \cite{Bra-ins-1}.

Let $X$ be a hypertoric variety, and $\bp\in X^\sT$ be a fixed point. 
Let $M(\bp)$ be the associated Verma module, $v$ be its highest weight vector, and $\la \ , \ \ra$ be the contravariant bilinear form. 

Recall that $Q = (Q_1, \cdots, Q_k)$ is the collection of K\"ahler parameters. 
Given $d \in \cochar(\sK)$, $Q^d = \prod_{j=1}^k Q_j^{\la e_j^*, d\ra}$. 
For any $\chi \in \chr(\sK)$, denote by $q^{\chi Q\partial_Q}$ the $q$-difference operator such that $Q^d \mapsto q^{\la \chi, d\ra} Q^d$. 
In particular, $q^{Q_j \partial_{Q_j}} = q^{e_j^* Q \partial_Q}$ is the $q$-difference operator $Q_j \mapsto q Q_j$.  

\begin{Definition}
A \emph{Whittaker vector} of the virtual Coulomb branch $\cA_\sT (\sK, \sN)_\bp$ is an element $W_\bp (Q) \in M(\bp) [[Q^{\frac{1}{2} \Eff(\bp)}]]$, such that $\br_{-c} W_\bp (Q) = Q^{c/2} W_\bp (Q)$, for any $c\in \Eff(\bp)$. 
\end{Definition}

For any $d$, the generator $\br_{-d}$ admits an inverse 
$$
\br_{-d}^{-1} = \br_d (\br_{-d} \br_d)^{-1} = (\br_d \br_{-d})^{-1} \br_d 
$$
in the localized virtual Coulomb branch $\cA_\sT (\sK, \sN)_\loc$.
In general, they are not in the subalgebra $\cA_\sT (\sK, \sN)_\bp$, by the explicit presentation of $\br_{-d} \br_d$. 

\begin{Lemma}
	
\begin{enumerate}[1)]
\setlength{\parskip}{1ex}

\item For $d\in \Eff(\bp)$, the action of $\br_{-d}^{-1}$ on the Verma module $M(\bp)$ is well-defined.
In particular, we have $\br_{-d}^{-1} v = \br_d (\br_{-d} \br_d)^{-1} v = (\br_{-d} \br_d) |_\bp^{-1} \cdot \br_d v$. 

\item The Whittaker vector $W_\bp(Q)$ exists uniquely. 
Explicitly, it is
$$
W_\bp (Q) = \sum_{d \in \Eff(\bp)} Q^{d/2} \cdot \br_{-d}^{-1} v = \sum_{d \in \Eff(X)} Q^{d/2} \cdot \br_{-d}^{-1} v. 
$$
The defining equality $\br_{-c} W_\bp (Q) = Q^{c/2} W_\bp (Q)$ holds for any $c\in \Eff(X)$. 
\end{enumerate}

\end{Lemma}

\begin{proof}
For 1), it suffices to investigate the action
$$
\br_{-d}^{-1} \br_{d'} v = \br_d (\br_{-d} \br_d)^{-1} \br_{d'} v =  (\br_{-d} \br_d)^{-1} |_{s^\chi \mapsto q^{\la \chi, d' \ra} s^\chi , \bp} \cdot \br_{d+ d'} v.
$$
One can see explicitly from (\ref{ab-rel-eff}) that $(\br_{-d} \br_d) |_{s^\chi \mapsto q^{\la \chi, d' \ra} s^\chi , \bp} \neq 0$.
	
For 2), the second identify is true since $\br_d v = 0$ unless $d \in \Eff(\bp)$.  
The uniqueness follows from the definition $\br_{-c} W_\bp (Q) = Q^{c/2} W_\bp (Q)$, which implies a recursive relation between the $Q$-coefficients of $W_\bp (Q)$. 
We then check the explicit formula satiesfies the definition of Whittaker vector. 
\ben
Q^{c/2} \sum_{d\in \Eff(\bp)} Q^{d/2} \cdot \br_{-d}^{-1} v = \sum_{d\in \Eff(\bp)} Q^{(c+d)/2} \cdot \br_{-d}^{-1} v = \br_{-c} \sum_{d\in \Eff(\bp)} Q^{(c+d)/2} \cdot \br_{-c-d}^{-1} v.
\een
The RHS equals $\br_{-c} W_\bp (Q)$, because for any $d' \in \Eff(\bp)$, $\br_{-c} \br_{-d'}^{-1} v = \br_{-c} \frac{\br_{d'} v}{(\br_{-d'} \br_{d'} )|_\bp} \in \cA^{-c + d'}_\sT (\sK, \sN)_\bp \cdot v = 0$ unless $d' \in c+ \Eff(\bp)$, by Lemma \ref{vanishing}. 

For the last statement, given $c\in \Eff(X)$, write $c = d_1 - d_2$, where $d_1, d_2 \in \Eff(\bp)$. 
It follows from 3) of Proposition \ref{prop-mix} that $\br_{\pm d_1} = \br_{\pm c} \br_{\pm d_2}$. 
Then $Q^{d_1 / 2}  W_\bp (Q) = \br_{-d_1} W_\bp (Q) = \br_{-c} \br_{-d_2}  W_\bp (Q) = \br_{-c} Q^{d_2 / 2}  W_\bp (Q)$, which implies the equality for $c$. 
\end{proof}

Choose a descendent insertion $\tau (s) = \tau (s_1, \cdots, s_k)$, which we allow to lie in $K_{\sK \times \sT \times \bC^*_\hbar \times \bC^*_q} (\pt)_X = \bigcap_{\bp \in X^\sT}  K_{\sK \times \sT \times \bC^*_\hbar \times \bC^*_q} (\pt)_\bp$.

\begin{Proposition} \label{thm-V-Verma}
The descendent vertex function restricted to $\bp$ is the descendent Whittaker function associated with $W_\bp (Q)$:
$$
V^{(\tau (s) )} (Q) \Big|_\bp = \la W_\bp (Q), \tau (s) W_\bp (Q) \ra. 
$$
\end{Proposition}

\begin{proof}
The $Q^d$-coefficient of $\la W_\bp (Q), \tau (s) W_\bp (Q) \ra$ is
$$ 
\frac{\la v, \br_{-d} \tau (s) \br_d v \ra}{(\br_{-d} \br_d) |_\bp^2} = 
\frac{\la v, \br_{-d} \br_d \tau (q^{\la e_1^*, d\ra} s_1, \cdots, q^{\la e_k^*, d \ra} s_k) v \ra}{(\br_{-d} \br_d) |_\bp^2} = 
\frac{\tau (q^{\la e_1^*, d\ra} s_1, \cdots, q^{\la e_k^*, d \ra} s_k) |_\bp 
}{
(\br_{-d} \br_d) |_\bp
} , 
$$
which is equal to 
$$
\tau (q^{\la e_1^*, d\ra} S_1 |_\bp, \cdots, q^{\la e_k^*, d \ra} S_k |_\bp)
\cdot 
\prod_{i=1}^n (- q^{1/2} \hbar^{-1/2} )^{D_i} \frac{(\hbar L_i |_\bp )_{ D_i } }{(q L_i |_\bp)_{D_i} }  . 
$$
The statement then follows from Proposition \ref{formula-V}.
\end{proof}

\subsection{Quantum $q$-difference equations and Bethe algebra}

We can now obtain $q$-difference equations satisfied by the vertex function. 
Note that by definition, $q^{\chi_i Q \partial_Q}$ shifts $x_i$ by $q$. 
Recall the renormalized vertex function introduced in Definition \ref{Defn-vertex}
$$
\widetilde V^{(\tau (s) )} (Q) := e^{\frac{\la \ln S, \ln Q \cdot (-\hbar^{1/2})^{-\det T_X^{1/2} } \ra}{\ln q}} \cdot \frac{(q T_X^{1/2})_\infty}{(\hbar T_X^{1/2})_\infty} \cdot V^{(\tau (s) )} (Q).
$$
Here we have $\la \ln S, \ln Q\ra = \sum_{j=1}^k \ln S_j \ln Q_j$. 
Under the action of $q^{\chi_i Q \partial_Q}$, the renormalized version is further shifted by $S^{\chi_i}$. 

\begin{Corollary} \label{cor-q-diff}

\begin{enumerate}[1)]
\setlength{\parskip}{1ex}

\item Let $1\leq j\leq k$, and $c \in \Eff(X)$. 
Then
$$
q^{Q_j \partial_{Q_j}} \widetilde V^{(\tau (s))} (Q) =  \widetilde V^{(s_j \tau (s))} (Q) , \qquad Q^{c} \widetilde V^{(\tau (s))} (Q) = \widetilde V^{ (\br_c \tau (s) \br_{-c} ) } (Q). 
$$

\item For any $c \in \Eff(X)$, the vertex function $\widetilde V (Q)$ is annihilated by the $q$-difference operators
\ben
&& \prod_{i: C_i >0} (a_i  q^{\chi_i Q \partial_Q}; q^{-1})_{C_i} \prod_{i: C_i<0} (\hbar a_i q^{\chi_i Q \partial_Q} )_{-C_i} \\
&& - Q^c \cdot (-q^{1/2} \hbar^{-1/2})^{\sum_{i=1}^n C_i}  \prod_{i: C_i >0} (\hbar a_i q^{\chi_i Q \partial_Q} )_{C_i} \prod_{i: C_i<0} (a_i q^{\chi_i Q \partial_Q}; q^{-1})_{-C_i} , 
\een
where $C_i = \langle \chi_i, c\ra$. 
\end{enumerate}

\end{Corollary}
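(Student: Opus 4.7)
The strategy is to check both identities fixed-point by fixed-point using the Whittaker vector formula $V^{(\tau(s))}(Q)\big|_\bp = \la W_\bp(Q), \tau(s) W_\bp(Q)\ra$ of Proposition \ref{thm-V-Verma}. For the second identity of part 1), the contravariance of the bilinear form makes $\br_c$ adjoint to $\br_{-c}$ on $M(\bp)$, so
\[
V^{(\br_c \tau(s) \br_{-c})}(Q)\big|_\bp \;=\; \la W_\bp, \br_c \tau(s) \br_{-c} W_\bp\ra \;=\; \la \br_{-c} W_\bp,\, \tau(s)\, \br_{-c} W_\bp\ra,
\]
and two applications of the Whittaker property $\br_{-c} W_\bp(Q) = Q^{c/2} W_\bp(Q)$ extract a factor of $Q^c$. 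Since this holds at every $\sT$-fixed point, the identity lifts to $K(X)_\loc[[Q]]$. The first identity is proved in the same spirit: one computes $q^{Q_j \partial_{Q_j}} W_\bp(Q)$ termwise from the explicit series, and uses the commutation $\br_d s_j = q^{-\la e_j^*,d\ra} s_j \br_d$ from Lemma \ref{shift-r-x} to reinterpret the resulting $q^{\la e_j^*,d\ra/2}$ factors as the formal action of $s_j^{1/2}$ on $W_\bp(Q)$; the Whittaker pairing then produces the descendent insertion $s_j\tau(s)$ after moving $s_j^{1/2}$ through via selfadjointness in the Cartan.

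For part 2), I would iterate the first identity of part 1) to convert the $q$-difference operator into a descendent insertion on the virtual Coulomb branch side. Since $q^{\chi_i Q \partial_Q} = \prod_j q^{\la \chi_i, e_j\ra Q_j \partial_{Q_j}}$ and these operators commute, one obtains $q^{\chi_i Q\partial_Q} V^{(\tau)}(Q) = V^{(s^{\chi_i} \tau)}(Q)$, and therefore
\[
(q^{\chi_i Q \partial_Q}; q^{-1})_{C_i} V^{(\tau)}(Q) = V^{((s^{\chi_i}; q^{-1})_{C_i} \tau)}(Q), \qquad (\hbar q^{\chi_i Q\partial_Q})_{-C_i} V^{(\tau)}(Q) = V^{((\hbar s^{\chi_i})_{-C_i} \tau)}(Q).
\]
The $Q^c$ prefactor in the second block of the operator is then absorbed by the second identity of part 1), producing the conjugation $\br_c(\cdot)\br_{-c}$. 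Annihilation of $V(Q)$ reduces to the algebraic identity
\[
\prod_{i: C_i > 0} (s^{\chi_i}; q^{-1})_{C_i} \prod_{i: C_i < 0} (\hbar s^{\chi_i})_{-C_i} \;=\; (-q^{1/2}\hbar^{-1/2})^{\sum_i C_i}\, \br_c \Bigl[\, \prod_{i: C_i > 0} (\hbar s^{\chi_i})_{C_i} \prod_{i: C_i < 0} (s^{\chi_i}; q^{-1})_{-C_i} \Bigr] \br_{-c}
\]
in $\cA_\sT (\sK, \sN)_\loc$, which I would verify by commuting $\br_c$ past each Pochhammer factor on the right via $\br_c s^{\chi_i} = q^{-C_i} s^{\chi_i} \br_c$ and substituting the explicit product formula \eqref{ab-rel-eff} for $\br_c \br_{-c}$.

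The main technical obstacle is the bookkeeping in this final reduction of part 2). After the commutation, the factors of $(\hbar x_i)_{-C_i}/(q x_i)_{-C_i}$ coming from $\br_c\br_{-c}$ must recombine with the shifted Pochhammer symbols $(q^{-C_i}\hbar s^{\chi_i})_{C_i}$ and $(q^{-C_i} s^{\chi_i}; q^{-1})_{-C_i}$ to reproduce the left-hand side; this requires case-splitting on the sign of $C_i$ and applying the elementary Pochhammer identities recalled in the appendix (in particular $(a)_{-n} = 1/(a q^{-n})_n$) to bring the two sides into a common form. The prefactor $(-q^{1/2}\hbar^{-1/2})^{\sum C_i}$ in the second term is precisely what is needed to cancel the opposite prefactor in the formula for $\br_c \br_{-c}$, which is what makes the identity close.
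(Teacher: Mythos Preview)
Your proposal is correct and follows essentially the same route as the paper. The second identity of part 1) is proved exactly as in the paper: apply the Whittaker formula, use contravariance to move $\br_c$ across the pairing, and invoke $\br_{-c} W_\bp = Q^{c/2} W_\bp$ twice. For the first identity the paper is even terser than you --- it just cites Lemma \ref{shift-r-x} --- but your argument via the explicit Whittaker series and the commutation $\br_d s_j = q^{-\la e_j^*,d\ra} s_j \br_d$ is the same computation unpacked.

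For part 2), the paper simply says ``follows from 1) by taking $\tau(s)=1$''. Your detailed bookkeeping with Pochhammer identities is that one-liner made explicit: once you have $Q^c V = V^{(\br_c \br_{-c})}$ and the first identity turning $q$-shift operators into descendent insertions $s^{\chi_i}$, substituting the closed formula \eqref{ab-rel-eff} for $\br_c\br_{-c}$ and rewriting via the appendix identities is exactly what is needed. So your ``main technical obstacle'' is real but routine, and is precisely what the paper sweeps into that single sentence.
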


\begin{proof}
For 1), the first identity follows from Lemma \ref{shift-r-x}, and the second from Proposition \ref{thm-V-Verma} and the identity
\ben
\la Q^{c/2} W_\bp (Q), \tau (s) Q^{c/2} W_\bp (Q) \ra &=& \la \br_{-c} W_\bp (Q), \tau (s) \br_{-c} W_\bp (Q) \ra \\
&=& \la W_\bp (Q), \br_c \tau (s) \br_{-c} W_\bp (Q) \ra . 
\een
The result holds for restrction to any $\bp$, and hence also holds for the global vertex function. 
2) follows from 1) by taking $\tau (s) = 1$. 
\end{proof}

\begin{Remark}
When $c\in \Eff(X)$ is a circuit, and hence $C_i \in \{0, \pm 1\}$, we recover the $q$-difference equations 1) of \cite[Theorem 5.14]{SZ}, up to some shift of K\"ahler parameters and a prefactor in the definition of vertex functions, which is responsible for the absorption of the equivariant parameters $a_i$ into the $q$-difference operators. 
\end{Remark}

We now give a description of the $q$-difference module generated by the vertex function in terms of the virtual Coulomb branch, as well as the Bethe algebra.
 
Consider the ring of $q$-difference operators:
$$
\cD_q  := \bC [[Q^{\Eff(X)}]] [ q^{\pm Q_j \partial_{Q_j}} , 1\leq j\leq k ] ,
$$
acting naturally on the space $K_{\sT \times \bC^*_\hbar \times \bC^*_q} (X)_\loc [[Q^{\Eff(X)}]]$, where all descendent vertex functions take values.

On the other hand, there is a $K_{\sT \times \bC^*_\hbar \times \bC^*_q} (X)_\loc$-linear $\cD_q$-action on the Cartan subalgebra $\cA^0_\sT (\sK, \sN)_X$, defined as
$$
q^{Q_j \partial_{Q_j}} \bullet \tau(s) := s_j \tau (s), \qquad Q^c \bullet \tau(s) := \br_c \tau(s) \br_{-c}, \qquad c\in \Eff(X). 
$$

Moreover, if we apply the $q\mapsto 1$ limit to $\cA_\sT (\sK, \sN)_X$, all products among the generators $\br_d$ and $s_j$'s become commutative. 
We denote the commutative algebra by $\cA_\sT (\sK, \sN)_{X, q=1}$. 

\begin{Theorem} \label{Thm-ab-D-mod}

\begin{enumerate}[1)]
\setlength{\parskip}{1ex}

\item The $\cD_q$-module generated by the vertex function $\widetilde V(Q)$ over $K_{\sT \times \bC^*_\hbar} (\pt)_\loc$ is isomorphic to \footnote{Here the tensor product $\otimes_\bC$ is as $\bC[ q^{\pm Q_j \partial_{Q_j}} , 1\leq j\leq k ]$-modules, not as algebras.}
$$
\bC [[Q^{\Eff(X)}]] \otimes_\bC \cA^0_\sT(\sK, \sN)_X \, / \, \la 1 \otimes \br_c \tau(s) \br_{-c}  - Q^c \otimes \tau(s) , \ c\in \Eff(X) \ra , 
$$
where $\widetilde V^{(\tau (s))} (Q)$ is sent to $1 \otimes \tau(s)$. 

\item The Bethe algebra of $X$ over $K_{\sT \times \bC^*_\hbar} (\pt)_\loc$ is isomorphic to
$$
\left. \cA^0_\sT (\sK, \sN)_{X, q=1} [[Q^{\Eff(X)}]] \,  \middle/ \, \la \br_c \br_{-c} |_{q=1} - Q^c, \ c\in \Eff(X) \ra \right. , 
$$
where $\tau(s) |_{q=1} \in \cA^0_\sT (\sK, \sN)_{X, q=1}$ is sent to the quantum tautological class $\widehat\tau (Q)$. 

\end{enumerate}

\end{Theorem}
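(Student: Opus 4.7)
I plan to construct the isomorphism as a $\cD_q$-module map
\[
\bar{\Psi} : \bC[[Q^{\Eff(X)}]] \otimes_{\bC} \cA^0_\sT(\sK, \sN)_X \big/ I \longrightarrow \cD_q \cdot V(Q), \qquad 1 \otimes \tau(s) \longmapsto V^{(\tau(s))}(Q),
\]
where $I$ is the ideal generated by $1 \otimes \br_c \tau(s) \br_{-c} - Q^c \otimes \tau(s)$ for $c \in \Eff(X)$. Well-definedness modulo $I$ follows from the second identity in Corollary \ref{cor-q-diff}, namely $Q^c V^{(\tau)}(Q) = V^{(\br_c \tau \br_{-c})}(Q)$. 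Combined with the first identity $q^{Q_j \partial_{Q_j}} V^{(\tau)}(Q) = V^{(s_j \tau)}(Q)$, the map is $\cD_q$-equivariant with respect to the diagonal action of $q^{\pm Q_j \partial_{Q_j}}$ specified in the footnote (shifting $Q$-powers on the left factor and multiplying by $s_j$ on the right). Surjectivity is immediate since $V(Q) = \bar{\Psi}(1 \otimes 1)$ generates the target and the image is $\cD_q$-stable.

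For injectivity, I plan to use the Whittaker function interpretation fixed point by fixed point. Using Lemma \ref{shift-r-x}, the relation rewrites as $Q^c \otimes \tau(s) \equiv 1 \otimes \tau(q^{-c} s) \cdot \br_c \br_{-c}$, which acts as a reduction rule converting every $Q^c \otimes \tau(s)$ into $1 \otimes (\text{element of } \cA^0_\sT(\sK, \sN)_X)$ while decreasing the $Q$-degree of that summand in the left factor. Given $\xi = \sum_d Q^d \otimes \tau_d(s)$ with $\bar{\Psi}(\xi) = 0$, restriction to each fixed point $\bp$ and Proposition \ref{thm-V-Verma} yield $\sum_d Q^d \langle W_\bp(Q), \tau_d(s) W_\bp(Q) \rangle = 0$; the non-degeneracy of the Verma module pairing (Lemma \ref{vanishing}) together with the explicit form of $W_\bp(Q)$ then forces the reduced element of $\cA^0_\sT(\sK, \sN)_X$ to vanish at each $\bp$, and since $\cA^0_\sT(\sK, \sN)_X = \bigcap_\bp \cA^0_\sT(\sK, \sN)_\bp$ by definition, one concludes $\xi \in I$. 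The main obstacle will be controlling this reduction rigorously: since the naive infinite sum $\sum_d \br_d \tau_d(s) \br_{-d}$ does not literally lie in $\cA^0_\sT(\sK, \sN)_X$, I expect to argue $Q$-adically, proving inductively that the reduction of $\xi$ modulo each power of the maximal ideal $\langle Q^c : c \in \Eff(X) \setminus \{0\} \rangle$ is uniquely determined and vanishes via the per-$\bp$ computation above.

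\textbf{Part 2.} The plan is to specialize the isomorphism of Part 1 at $q = 1$. At this limit the shift operators $q^{\pm Q_j \partial_{Q_j}}$ collapse to the identity, and by the commutativity of $\cA_\sT(\sK, \sN)$ at $q = 1$ (part 4 of the lemma following the definition of the virtual convolution product), the relation $Q^c \otimes \tau = 1 \otimes \tau(q^{-c} s) \br_c \br_{-c}$ reduces to $Q^c = \br_c \br_{-c}|_{q=1}$ after cancelling $\tau$, yielding the claimed presentation of the Bethe algebra. Finally, the identification $\tau(s)|_{q=1} \leftrightarrow \widehat{\tau}(Q)$ will follow from Lemma \ref{lemma-Bethe-hat}: the multiplicativity $\widehat{\tau \eta}(Q) = \widehat{\tau}(Q) * \widehat{\eta}(Q)$ exhibits $\tau \mapsto \widehat{\tau}(Q)$ as a ring homomorphism into the Bethe algebra, which must agree with the action of $1 \otimes \tau$ inherited from Part 1 by the cyclic generation of the module by $V(Q) = V^{(1)}(Q)$ at $q = 1$.
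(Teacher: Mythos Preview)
Your Part 1 follows the paper's approach: both rely on Corollary \ref{cor-q-diff} to construct the surjection and verify the relations. The paper's proof of Part 1 is extremely terse (it literally says ``follows from Corollary \ref{cor-q-diff}''), so your fixed-point and $Q$-adic reduction argument for injectivity is a reasonable expansion of what the paper leaves implicit.

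Your Part 2, however, takes a different route from the paper and has a gap. The paper does \emph{not} specialize Part 1 at $q=1$. Instead it argues directly: the assignment $\tau(s)|_{q=1} \mapsto \widehat{\tau}(Q)$ is a well-defined ring homomorphism onto the Bethe algebra by Lemma \ref{lemma-Bethe-hat}, the elements $\br_c \br_{-c}|_{q=1} - Q^c$ lie in the kernel, and then---this is the key step you are missing---at $Q \to 0$ both sides become $K_{\sT \times \bC^*_\hbar}(X)_\loc$, so Nakayama's lemma forces the surjection to be an isomorphism. Your approach of ``specialize Part 1 and then match via Lemma \ref{lemma-Bethe-hat}'' does not obviously close: the $\cD_q$-module in Part 1 is generated by the \emph{bare} vertex functions $V^{(\tau)}(Q)$, while the Bethe algebra is built from the \emph{capped} vertices $\widehat{V}^{(\tau)}(1,Q) = \widehat{\tau}(Q)$, and these differ by the capping operator $\Psi(q,Q)$ from (\ref{Psi-V}). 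Your phrase ``must agree \ldots\ by the cyclic generation'' hides exactly this issue. The paper's Nakayama argument sidesteps it entirely by comparing both algebras at $Q \to 0$, where the deformation is trivial and only the classical $K$-theory remains.
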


\begin{proof}
1) follows from Corollary \ref{cor-q-diff}. 
For 2), first we see that there is a surjection from $\cA^0_\sT (\sK, \sN)_{X, q=1} [[Q^{\Eff(X)}]]$ to the Bethe algebra, sending $\tau(s)|_{q=1}$ to $\widehat\tau(Q)$. 
The map is well-defined since different $\tau$'s with the same $q=1$ limit yield the same quantum tautological class. 
It preserves the algebra structure by Lemma \ref{lemma-Bethe-hat}. 
It is also clear that any $\br_c \br_{-c} |_{q=1} - Q^c$ lies in the kernel. 
Finally, when setting $Q\to 0$, and if $c\in \Eff(X)$ is a circuit, the ideal generated by $\br_c \br_{-c}$ is exactly $\langle \prod_{i: \langle \chi_i c \rangle >0} (1-x_i) \prod_{i: \langle \chi_i, c \rangle <0} (1-\hbar x_i) \ra$, since other factors in  $\br_c \br_{-c}$ are invertible.
We obtain an isomorphism to $K_{\sT \times \bC^*_\hbar} (X)_\loc$ by the presentation (\ref{K-ring}). 
The theorem then follows from Nakayama's lemma. 
\end{proof}

\vspace{2em}

\section{Nonabelian case} \label{sec-nonab}

In this section, we return to the general case, where $\sG$ is a connected reductive group (with $\pi_1 (\sG)$ torsion-free), and $\sN$ is a finite-dimensional representation. 

\subsection{Higgs branch abelianization}

In this subsection we review the abelianization of the Higgs branch $X$. 
The approach was applied in \cite{PSZ, KPSZ} to write down the explicit formulas for vertex functions\footnote{For an application of abelianization to 3d $\cN = 2$ theories, see \cite{Webb, Wen, GY}.}.

Recall that we have a complex moment map $\mu_\sG: T^* \sN \to \fg^*$. 
Choose a stability condition $\theta \in \chr (\sK)^\sW$, and let $X = \mu_\sG^{-1}(0) /\!/_\theta \sG = \mu_\sG^{-1} (0)^{\sG-s} / \sG$ be the holomorphic symplectic quotient, where $\mu_\sG^{-1} (0)^{\sG-s}$ denotes the $\sG$-stable locus. 

Let $\sK \subset \sG$ be a fixed maximal torus, and view $\sN$ as a $\sK$-representation. 
Let $\pr: \fg^* \to \fk^*$ be the projection to the dual Lie algebra of $\sK$. 
The moment map for $\sK$ is then the composition $\mu_\sK = \pr \circ \mu_\sG: T^* \sN \to \fk^*$. 
Let $X^\ab := \mu_\sK^{-1} (0) /\!/_\theta \sK = \mu_\sK^{-1} (0)^{\sK-s} / \sK$ be the hypertoric variety.

\begin{Assumption} \label{Ass}

We make the following assumptions.

\begin{enumerate}[(i)]
\setlength{\parskip}{1ex}

\item $\theta$ is chosen, such that $\mu^{-1}(0)^{\sG-ss} = \mu^{-1}(0)^{\sG-s}$, and hence $X$ is smooth.
Same for $X^\ab$.

\item There is a torus $\sA$ acting on $\sN$, which commutes with $\sG$, such that $X$ is $(\sA \times \bC^*_\hbar)$-equivariantly formal. 

\item The $(\sA \times \bC^*_\hbar)$-equivariant Kirwan map $\kappa: K_{\sG \times \sA\times \bC^*_\hbar} (\pt)  \to K_{\sA \times \bC^*_\hbar} (X)$ is surjective.

\end{enumerate}
\end{Assumption}

Assumption (i)-(iii) hold for typical examples we are interested in, such as Nakajima quiver varieties, where the torus $\sA$ is the flavor torus acting on the framing spaces. 
See \cite{MN} for (iii) and \cite{Gon} for (ii). 

Note that the stability condition $\theta$ now lies in $\chr(\sK)_\bR^\sW$.
For the wall-and-chamber structure, let us consider the following finer structure in $\chr(\sK)_\bR$: 
besides the characters $\chi_i$'s that appear in $\sN$, all roots $\alpha \in \Phi$ of $\sG$ also define the walls. 
In other words, walls are given by hyperplanes spanned by \emph{generalized roots} (see \cite{BFN}). 
As in \cite[Theorem 5.1]{HL-S}, under the Assumption (i) above, the walls of $X$ are the intersection of such walls with $\chr(\sK)_\bR^\sW$. 

The map $\pi_1 (\sK) \to \pi_1(\sG)$ can be viewed as the dual of the embedding $\chr(\sK)^\sW \to \chr(\sK)$. 
The cowall-and-cochamber structure of $X$ is again the dual of the wall-and-chamber structure, lying in $\pi_1 (\sG)$. 
In particular, the effective cone $\Eff(X)$ is again the dual of the K\"ahler cone.
Since $\theta$, and the set of characters $\{\chi_i \}$ is $\sW$-invariant, the effective cone $\Eff(X^\ab)$ is also $\sW$-invariant.
It is then divided by root hyperplanes into subcones, where its intersection with the dominant Weyl cochamber $\Eff(X^\ab) \cap \cochar(\sG)_{\bR, +}$ is a $\sW$-fundamental domain.

The abelian and nonabelian symplectic quotients fit into the following diagram
$$
\xymatrix{
Y= \mu_\sG^{-1} (0)^{\sG-s} / \sK \ar@{^{(}->}[r]^-j \ar[d]_p & \mu_\sG^{-1} (0)^{\sK-s} / \sK \ar@{^{(}->}[r]^-{i_0} &  \mu_\sK^{-1} (0)^{\sK-s} / \sK =X^\ab \\
X= \mu_\sG^{-1} (0)^{\sG-s} / \sG,
}
$$
where $j$ is an open embedding, $p$ is a $\sG / \sK$-fibration, and $i_0$ is a closed embedding, given by the zero section of the bundle associated with the roots of $\fg$.
We denote $Y:= \mu_\sG^{-1} (0)^{\sG-s} / \sK$, $i:= i_0 \circ j$, and denote by $i^!:= j^* \circ i_0^!$ the l.c.i. pullback in $K$-theory.

The relationship between the cohomologies of $X$ and $X^\ab$ has been studied in \cite{HP, Shenf}. 
The following is a $K$-theoretic analogue.
Given a choice of Weyl chamber, we denote 
$$
e_+ := \prod_{\alpha\in \Phi_+} (s^{\alpha/2}-s^{-\alpha/2})  = s^\rho \prod_{\alpha \in \Phi_+} (1-s^{-\alpha}) ,
 \qquad
e:= e_+ \prod_{\alpha \in \Phi} (1- \hbar s^{\alpha}),
$$
which are $\sW$-anti-invariant. 
Let $\ann(e)$ be the annihilator of its images in the $K$-theory of $X^\ab$ under the Kirwan map.

\begin{Lemma} \label{Lemma-ab-nonab-K}
\begin{enumerate}[1)]
\setlength{\parskip}{1ex}

\item The map $p_* \circ i^!$ gives an isomorphism
\begin{equation} \label{ab-K}
K_{\sA \times \bC^*_\hbar} (X) \cong ( K_{\sA \times \bC^*_\hbar} (X^\ab) / \ann (e) )^\sW  . 
\end{equation}

\item Given a presentation $K_{\sA \times \bC^*_\hbar} (X^\ab) \cong K_{\sK \times \sA \times \bC^*_\hbar} (\pt) / I$, we have
$$
K_{\sA \times \bC^*_\hbar} (X)_\loc \cong K_{\sK \times \sA \times \bC^*_\hbar} (\pt)^\sW \otimes \ff (K_{\sA \times \bC^*_\hbar} (\pt )) \Big/ \Big\la e_+^{-1} \sum_{w\in \sW} \sgn(w) w f  , \ f \in I \Big\ra . 
$$

\end{enumerate}

\end{Lemma}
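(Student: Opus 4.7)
The plan is to prove this as a $K$-theoretic analogue of Martin's theorem for symplectic reduction, exploiting the flag-bundle structure of $p\colon Y \to X$ together with the regular embedding $i = i_0 \circ j\colon Y \hookrightarrow X^{\ab}$. Two geometric features drive the argument: $p$ is a Zariski-locally trivial fibration with fiber the flag variety $\sG/\sK$, carrying a natural Weyl-group action on $Y$; and $i_0$ is the zero section of the vector bundle on $[\mu_\sK^{-1}(0)^{\sK-s}/\sK]$ associated with the roots of $\sG$ not in $\sK$, while $j$ is an open embedding.

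For part 1), I would proceed in three steps. First, I would identify $p^*\colon K_{\sA \times \bC^*_\hbar}(X)_\loc \xrightarrow{\sim} K_{\sA \times \bC^*_\hbar}(Y)_\loc^\sW$ as an isomorphism, using $p_* p^* = \mathrm{id}$ (since $p_*\cO_Y = \cO_X$ for a flag bundle) together with an $\sA$-fixed-point comparison matching $X^\sA$ with $\sW$-orbits in $Y^\sA$, justified by Assumption (ii). Second, I would invoke the $K$-theoretic Demazure/Weyl-character pushforward formula for the flag bundle $p$: for $\beta \in K(Y)_\loc$,
$$
p_* \beta \ = \ e^{-1} \sum_{w \in \sW} \sgn(w)\cdot w \beta,
$$
where $e$ is realized on $Y$ as the Kirwan image of $s^\rho \prod_{\alpha \in \Phi_+}(1-s^{-\alpha})$. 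Third, composing with $i^*$, the resulting map $\alpha \mapsto e^{-1} \sum_w \sgn(w)\, w(i^*\alpha)$ has image landing in $K(Y)_\loc^\sW \cong K(X)_\loc$; surjectivity onto $K(X)_\loc$ follows from Assumption (iii), while the identification of the kernel with $\ann(e)$ uses that $e$ is $\sW$-anti-invariant, so that the symmetrization map factors through the quotient by $\ann(e)$ and then through $\sW$-invariants.

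For part 2), I would combine part 1) with the presentation $K_{\sA \times \bC^*_\hbar}(X^\ab) \cong K_{\sK \times \sA \times \bC^*_\hbar}(\pt)/I$ furnished by Assumption (iii). Lifting a class in $K(X^\ab)_\loc$ to $f \in K_{\sK \times \sA \times \bC^*_\hbar}(\pt)$, the map $p_* \circ i^*$ descends to $f \mapsto e^{-1}\sum_w \sgn(w)\, w(f)$, which lies in $K_{\sK \times \sA \times \bC^*_\hbar}(\pt)^\sW$ after extending scalars to $\ff(K_{\sA \times \bC^*_\hbar}(\pt))$ to make the division by $e$ meaningful; the only relations are those inherited from $I$, yielding the stated presentation. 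The main obstacle I anticipate is the rigorous justification of the Demazure pushforward formula in this non-compact equivariant setting, specifically that the inversion of $e$ is legitimate in the localized equivariant $K$-group and that the kernel is exactly $\ann(e)^\sW$ rather than something larger; both points should reduce, via Assumption (ii) and the isolated hypertoric $\sA$-fixed points of $X^\ab$, to an explicit $\sA$-localization check at each fixed point.
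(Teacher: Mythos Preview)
Your proposal is correct and follows the same underlying geometry as the paper's argument. The paper's proof is simply a more compressed version: for part 1) it cites the cohomological abelianization theorem of Harada--Proudfoot \cite[Theorem~2.3]{HP} and transfers to $K$-theory via Riemann--Roch, noting that the Euler class of the normal bundle to $i_0$ is invertible in the localized ring (which is why the class $e$ in \cite{HP} simplifies to the Weyl denominator here); for part 2) it invokes the analogue of \cite[Theorem~4.4]{ES}, namely that $f \mapsto e^{-1}\sum_{w}\sgn(w)\,wf$ is an exact $K_\sK(\pt)^\sW$-module projection from $K_\sK(\pt)$ onto $K_\sK(\pt)^\sW$. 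Your three-step argument for part 1) is essentially a direct $K$-theoretic rederivation of what \cite{HP} proves in cohomology, and your anticipated obstacle---justifying the Demazure pushforward in the non-compact equivariant setting---is exactly what the paper sidesteps by going through cohomology first and then applying Riemann--Roch.
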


\begin{proof}
1) follows from \cite[Theorem 2.3]{HP} and the equivariant Riemann--Roch theorem \cite{EG}. 
Note that by the equivariant formality, $K_{\sA \times \bC^*_\hbar} (X)$ is flat over $K_{\sA \times \bC^*_\hbar} (\pt)$, and hence injects into its completion at the augmentation ideal of $K_{\sA \times \bC^*_\hbar} (\pt)$.
For 2), we observe from 1) the invertibility of $\prod_{\alpha \in \Phi} (1- \hbar s^{\alpha})$ in the localized $K$-theory. 
As an analogue of \cite[Theorem 4.4]{ES}, 2) follows from the fact that the map $f \mapsto e_+^{-1} \sum_{w\in \sW} \sgn(w) w f$ is an exact morphism of $K_{\sK} (\pt)^\sW$-modules, which projects $K_{\sK} (\pt)$ to $K_{\sK} (\pt)^\sW$. 
\end{proof}

\subsection{Abelianization of vertex function} \label{sec-nonab-vertex}

One obtains the following induced diagram
$$
\xymatrix{
\QM (\bP^1, Y)  \ar[d]_p   \ar@{^{(}->}[r]^-j \ar[d]_p & \QM(\bP^1, \mu_\sG^{-1} (0)^{\sK-s} / \sK ) \ar@{^{(}->}[r]^-{i_0} &  \QM(\bP^1, X^\ab) \\
\QM (\bP^1, X),
}
$$
where by abuse of notations, the maps are still denoted by $p$, $i_0$ and $j$, where $i_0$ is a closed embedding and $j$ is an open embedding. 
We still denote the composition by $i = i_0 \circ j$.

Moreover, there is a similar diagram for the stacky quotients $\fX := [ \mu_\sG^{-1} (0) / \sG]$, $\fY := [\mu_\sG^{-1} (0) / \sK]$, $\fX^\ab := [\mu_\sK^{-1} (0) / \sK]$, as well as the Hom-stacks $\Hom(\bP^1, \fX)$. 
Denote the universal family and evaluation map by $\pi: \bP^1 \times \Hom(\bP^1, \fX) \to \Hom(\bP^1, \fX)$ and $f: \bP^1 \times \Hom(\bP^1, \fX) \to \fX$. 

Given a root $\alpha$ of $\fg$, let $L_\alpha$ be the line bundle on $Y$ associated with $\alpha$. 
Let $\cU$ be the object in the derived category of $\QM(\bP^1, Y)$, which lies in degree $[0,1]$:
$$
\cU := R\pi_* f^* \Big( \bigoplus_{\alpha \in \Phi} L_\alpha \Big). 
$$ 
Note that since $Y$ is an abelian quotient, the connected components of $\QM(\bP^1, Y)$ are indexed by $\cochar(\sK)$. 
So on each connected component, the fiber dimensions of cohomology groups $h^i (\cU^\vee)$ for $i = -1, 0$ stay constant. 
Therefore $h^i(\cU^\vee)$ are vector bundles. 

\begin{Lemma} \label{abelianization-obs}
Let $E^\bullet_{\QM(\bP^1, X)}$, $E^\bullet_{\QM(\bP^1, Y)}$, $E^\bullet_{\QM (\bP^1, X^\ab)}$ be perfect obstruction theories constructed in Section \ref{sec-symp-quot}. 
\begin{enumerate}[1)]
\setlength{\parskip}{1ex}

\item We have a distinguished triangle
$$
\xymatrix{
p^* E^\bullet_{\QM(\bP^1, X)} \ar[r] & E^\bullet_{\QM(\bP^1, Y)} \ar[r] & \cU^\vee \ar[r] & p^* E^\bullet_{\QM(\bP^1, X)} [1].
}
$$
As a result, $\cU^\vee$ is a relative perfect obstruction theory for the map $p: \QM(\bP^1, Y) \to \QM (\bP^1, X)$, such that
$$
\widehat\cO_{\QM(\bP^1, Y), \vir} = \widehat p^!_\vir \widehat\cO_{\QM (\bP^1, X), \vir}.
$$

\item We have a distinguished triangle \footnote{By abuse of notation, we also denote by $\cU$ the complex defined similarly on $\QM(\bP^1, \mu_\sG^{-1} (0)^{\sK-s} / \sK)$.}
$$
\xymatrix{
\hbar \otimes \cU^\vee \ar[r] & i_0^* E^\bullet_{\QM(\bP^1, X^\ab)} \ar[r] & E^\bullet_{\QM(\bP^1, \mu_\sG^{-1} (0)^{\sK-s} / \sK)} \ar[r] & \hbar \otimes \cU^\vee [1],
}
$$
which restricts via $j$ to a similar triangle on $\QM(\bP^1, Y)$.
As a result, $i_0$ is a regular embedding, and $\hbar \otimes \cU^\vee [1]$ is a relative obstruction theory for $i_0$, of amplitude $[-2, -1]$, and satisfying Assumption \ref{Ass-invert}.
We have
$$
\widehat\cO_{\QM(\bP^1, Y), \vir} 
= j^* \widehat i^!_{0,\vir} \widehat\cO_{\QM(\bP^1, X^\ab), \vir} ,
$$
where $\widehat i^!_{0, \vir}$ is the (modified) virtual l.c.i. pullback defined in Definition \ref{pullback}.
\end{enumerate}

\end{Lemma}

\begin{proof}
The two distinguished triangles follows from the definition of obstruction theories. 
1) then follows from the functoriality of (usual) virtual pullbacks, while 2) follows from the functoriality for the virtual l.c.i. pullback, i.e. Proposition \ref{functoriality} and (\ref{functoriality-hat}). 
\end{proof}

Let $V(X; Q)$ and $V (X^\ab; \tilde Q)$ be the vertex functions of $X$ and $X^\ab$ respectively, where the K\"ahler parameters $Q^{\bar d}$ for $X$ record the connected components $\bar d \in \pi_1 (\sG)$. 
For a degree class $d\in \cochar (\sK) = \pi_1 (\sK)$ of $Y$ or $X^\ab$, we write $\bar d$ for its image in $\pi_1 (\sG)$. 

In practice, the above result is often written in terms of components at fixed points. 
From now on, we assume that $X^{\sA \times \bC^*_\hbar}$ consists of \emph{isolated points}. 
Let $\bp\in X^{\sA \times \bC^*_\hbar}$ be a fixed point.
The preimage $p^{-1} (\bp) \cong \sG / \sK$ is an affine fibration over $\sG / \sB$, where $\sB\subset \sG$ is a Borel subgroup containing $\sK$.  
If we apply the $\sT$-action on $Y$, the $\sT$-fixed points of $p^{-1} (\bp)$ consists of elements of the form $w \cdot \tilde\bp$, where $\tilde\bp \in p^{-1} (\bp)$ is any point, and $w\in \sW$ is an element of the Weyl group. 
We see that under the isomorphism (\ref{ab-K}), $\cO_{\tilde\bp}$ is mapped to $\cO_\bp$. 

\begin{Proposition} \label{prop-nonab-V}
Assume that $X$ admits isolated $(\sA \times \bC^*_\hbar)$-fixed points. 
Then
\ben
V^{(\tau (s)) } (X; Q) \Big|_\bp &=&  \sum_{d\in \Eff (X^\ab)} Q^{\bar d} \cdot \prod_{\alpha\in \Phi} \left[ (-q^{1/2} \hbar^{-1/2})^{D_\alpha} \frac{(\hbar L_\alpha)_{D_\alpha}}{ (q L_\alpha)_{D_\alpha} }  \right]^{-1} \Big|_{\tilde\bp} \\
&& \cdot \prod_{i=1}^n \left[ (-q^{1/2} \hbar^{-1/2})^{D_i} \frac{(\hbar L_i)_{D_i}}{ (q L_i)_{D_i} }  \right] \Big|_{\tilde\bp} \cdot \tau (q^{\la e_1^* , d\ra} S_1, \cdots, q^{\la e_k^*, d \ra} S_k ) \Big|_{\tilde\bp}. 
\een
where $D_i := \langle \chi_i, d \rangle$, and $D_\alpha := \la \alpha, d\ra$, and the RHS is understood as valued in the $\sA$-equivariant theory with specialization of equivariant parameters $K_\sT(\pt) \to K_\sA(\pt)$. 
\end{Proposition}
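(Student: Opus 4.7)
The strategy is to apply $\sA$-localization at $\bp$, abelianize through the diagram $p:Y\to X$, $i:Y\hookrightarrow X^{\ab}$, and reduce to the hypertoric formula of Proposition \ref{formula-V}.

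First, since $X^\sA$ is isolated and $\QM^\circ(X)^{\bC^*_q}$ is proper, $V^{(\tau(s))}(X;Q)|_\bp$ is given by the $\sA$-localized $(\ev_\infty)_*$-pushforward of $\ev_0^*\tau\cdot\widehat\cO_\vir$ from $\QM^\circ(X;\bp)$. Fixing a lift $\tilde\bp\in Y^\sT$ of $\bp$, I use that $p:\QM(\bP^1,Y)\to \QM(\bP^1,X)$ is a $\sG/\sK$-fibration with $\sT$-fixed points on the fiber $p^{-1}(\bp)$ equal to the Weyl orbit $\{w\tilde\bp\}_{w\in\sW}$, and that the diagram involving $\ev_\infty$ is Cartesian. $\sT$-equivariant localization on $\sG/\sK$ then rewrites the computation as a sum over the Weyl orbit of $\sT$-equivariant integrals on $\QM^\circ(Y;w\tilde\bp)$, each divided by $\widehat a(T_{\sG/\sK}|_{w\tilde\bp})=\widehat a\bigl(\bigoplus_{\alpha\in\Phi}L_\alpha|_{w\tilde\bp}\bigr)$.

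Second, I apply Lemma \ref{abelianization-obs}. Combining its two distinguished triangles gives
\[
p^*\widehat\cO_{\QM(X),\vir}\otimes\Lambda_1 \ =\ i^*\widehat\cO_{\QM(X^\ab),\vir}\otimes \Lambda_2^{-1},
\]
with $\Lambda_1,\Lambda_2$ explicit $\widehat a$-like factors in $h^{-1}(\cU^\vee)$ and $h^{-1}(\hbar\otimes\cU^\vee)$ respectively. Since $i$ is the inclusion of the zero section of the root bundle, the restriction along $i$ identifies $\QM^\circ(Y;\tilde\bp)^\sT$ with $\QM^\circ(X^\ab;\tilde\bp)^\sT$. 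Transferring the integrals and absorbing the Weyl sum over $w\tilde\bp$ into the full effective-cone sum $\sum_{d\in\Eff(X^\ab)}$ by relabeling degrees via $\sW$, I rewrite $V^{(\tau(s))}(X;Q)|_\bp$ as an abelian vertex integral on $\QM^\circ(X^\ab;\tilde\bp)$ with an additional insertion $[\widehat a(T_{\sG/\sK})\cdot\Lambda_1\Lambda_2]^{-1}$ that telescopes to $[\widehat a(\cU^\vee)\cdot\widehat a(\hbar\otimes\cU^\vee)]^{-1}|_{\tilde\bp}$.

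Third, on a $\sT\times\bC^*_q$-fixed quasimap of degree $d$ landing at $\tilde\bp$, $f^*L_\alpha$ is isomorphic to $\cO(D_\alpha)\otimes L_\alpha|_{\tilde\bp}$, and $R\pi_*\cO(D_\alpha)$ has the standard $\bC^*_q$-character expansion on $\bP^1$. A direct computation of $\widehat a$ on $\cU^\vee\oplus(\hbar\otimes\cU^\vee)$ applied to this character yields
\[
\widehat a\bigl(\cU^\vee\oplus \hbar\otimes\cU^\vee\bigr)\big|_{\tilde\bp}\ =\ \prod_{\alpha\in\Phi}\left[(-q^{1/2}\hbar^{-1/2})^{D_\alpha}\frac{(\hbar L_\alpha)_{D_\alpha}}{(qL_\alpha)_{D_\alpha}}\right]\bigg|_{\tilde\bp},
\]
matching, via the identical calculation applied to the characters $\chi_i$ of $\sN$, the hypergeometric factor appearing in Proposition \ref{formula-V}. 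Inserting this into the abelian vertex formula for $V^{(\tau(s))}(X^\ab;\tilde Q)|_{\tilde\bp}$ under $\tilde Q\mapsto Q$ then delivers the claimed identity.

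The main obstacle is the careful bookkeeping in the first two steps --- in particular, verifying that the Weyl translates of $\tilde\bp$ and the Euler-class denominators $\widehat a(T_{\sG/\sK})$ combine with $\Lambda_1\Lambda_2$ to produce exactly one copy each of $\widehat a(\cU^\vee)^{-1}$ and $\widehat a(\hbar\otimes\cU^\vee)^{-1}$, and that the iterated sum $\sum_w\sum_d$ collapses via the change of variables $d\mapsto w^{-1}d$ to the single sum $\sum_{d\in\Eff(X^\ab)}$ in the statement. These cancellations are formal but delicate, relying on the $\sW$-equivariance of the tautological classes and on the half-integer normalization inherent in $\widehat a$ (as opposed to $\bigwedge^\bullet$), which is where the $\hbar$-shift between numerator and denominator of the root factors ultimately originates.
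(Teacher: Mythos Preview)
Your overall strategy --- abelianize via Lemma \ref{abelianization-obs} and reduce to the hypertoric formula of Proposition \ref{formula-V} --- matches the paper's, and your third step (computing the root correction $\prod_\alpha[\cdots]^{-1}$ from the $\bC^*_q$-characters of $R\pi_*f^*L_\alpha$) is exactly right. The difference is in how you pass from $\bp$ to $\tilde\bp$.

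The paper avoids any Weyl sum. It simply uses that $p_*[\cO_{\tilde\bp}]=[\cO_\bp]$ for a \emph{single} chosen lift $\tilde\bp$, so the localized contribution at $\bp$ equals the localized contribution at $\tilde\bp$ once one compares the virtual structure sheaves. Lemma \ref{abelianization-obs} then gives directly
\[
\widehat\cO_{\QM(\bP^1,\bp)}\cdot\frac{\bigwedge^\bullet h^{-1}(\cU^\vee)\otimes\det h^{-1}(\cU^\vee)^{-1/2}}{\bigwedge^\bullet h^{0}(\cU^\vee)\otimes\det h^{0}(\cU^\vee)^{-1/2}}
=\frac{\bigwedge^\bullet h^{0}(\hbar\otimes\cU^\vee)\otimes\det h^{0}(\hbar\otimes\cU^\vee)^{-1/2}}{\bigwedge^\bullet h^{-1}(\hbar\otimes\cU^\vee)\otimes\det h^{-1}(\hbar\otimes\cU^\vee)^{-1/2}}\cdot\widehat\cO_{\QM(\bP^1,\tilde\bp)},
\]
the $h^0$ terms coming from the virtual normal bundles. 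That is the whole comparison; no fiber localization on $\sG/\sK$ is needed.

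Your route through $\sT$-localization on $\sG/\sK$ introduces a Weyl sum that you then try to ``collapse'' by the change of variables $d\mapsto w^{-1}d$ into a single sum over $\Eff(X^{\ab})$. This collapsing mechanism is not correct as you describe it. Each Weyl translate $w\tilde\bp$ already contributes a sum over its own full effective cone $\Eff(w\tilde\bp)$, and these cones overlap; relabeling degrees does not turn $\sum_w\sum_{d\in\Eff(w\tilde\bp)}$ into a single $\sum_{d\in\Eff(X^{\ab})}$. What actually makes the Weyl sum agree with the single-lift computation is the identity $p_*\cO_{\sG/\sK}=\cO_{\pt}$, i.e.\ the Euler-class denominators $\widehat a(T_{\sG/\sK}|_{w\tilde\bp})$ are precisely what force $\sum_w(\cdots)/\widehat a(\cdots)$ to equal the value at any one $w\tilde\bp$, not a degree-reindexing. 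So your ``main obstacle'' is real, and the resolution is different from what you propose: drop the Weyl sum entirely and use $p_*[\cO_{\tilde\bp}]=[\cO_\bp]$ from the start.
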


\begin{proof}
Since $\tilde\bp$ is a point in the preimage of $\bp$, and in particular, $p_* [\cO_{\tilde\bp}] = [\cO_\bp]$, it suffices to compare the localized virtual structure sheaves of the fixed loci $\QM(\bP^1, \bp) \subset \QM (\bP^1, X)^\sA$ and $\QM(\bP^1, \tilde\bp) \subset \QM(\bP^1, Y)^\sT$. 
It suffices to compare the obstruction theories via Lemma \ref{abelianization-obs}. 
We have 
\ben
&& \widehat\cO_{\QM(\bP^1, \bp)} \cdot \frac{ \bigwedge^\bullet h^{-1} (\cU^\vee) \otimes \det h^{-1} (\cU^\vee)^{-1/2} }{ \bigwedge^\bullet h^{0} (\cU^\vee) \otimes \det h^{0} (\cU^\vee)^{-1/2} } \\
&& = \frac{ \bigwedge^\bullet h^0 (\hbar \otimes \cU^\vee) \otimes \det h^{0} (\hbar \otimes \cU^\vee)^{-1/2} }{\bigwedge^\bullet h^{-1} (\hbar \otimes \cU^\vee) \otimes \det h^{-1} (\hbar \otimes \cU^\vee)^{-1/2} } \cdot \widehat\cO_{\QM(\bP^1, \tilde\bp)} , 
\een
where the $h^0$ contribution comes from the virtual normal bundle, i.e. the difference of deformation theories. 
\end{proof}

\begin{Remark}
From now on we will allow descendent insertion $\tau(s) \in K_{\sK \times \sT \times \bC^*_\hbar \times \bC^*_q} (\pt)_\loc$ (or a completion of it), as long as $\tau (q^{\la e_1^* , d\ra} S_1, \cdots, q^{\la e_k^*, d \ra} S_k ) \big|_{X}$ is well-defined for all $d\in \Eff(X)$.
\end{Remark}


\subsection{Abelianized virtual Coulomb branch}

Let $\iota$ be the closed embedding of the moduli of triples $\cR(\sK, \sN)$ into $\cR(\sG, \sN)$, induced by the embedding $\Gr_\sK \hookrightarrow \Gr_\sG$. 
The same arguments as in \cite[Section 5(i) and Remark 5.23]{BFN} yield an (localized) abelianization isomorphism between the Coulomb branch $\cA_\sA (\sG, \sN)$ and the \emph{abelianized} $\cA_\sA (\sK, \sN)^\sW$.
Since little is known about the nonabelian Coulomb branch $\cA_\sA (\sG, \sN)$, in the following, we will work with the abelianized Coulomb branch $\cA_\sA (\sK, \sN)^\sW$.

As in Section \ref{Sec-val}, let us specify the coefficient ring.
Similarly to the abelian case, we have the decomposition into irreducible components $\Spec K_{\sA \times \bC^*_\hbar} (X) = \bigcup_{\bp\in X^{\sA \times \bC^*_\hbar}} \Spec K_{\sA \times \bC^*_\hbar} (\bp)$, and they embeds into the ambient torus $\Spec K_{\sG \times \sA \times \bC^*_\hbar} (\pt) = \Spec K_{\sK \times \sA \times \bC^*_\hbar} (\pt)^\sW$.
Denote by $I(\bp)$ the ideal of the component $\Spec K_{\sA \times \bC^*_\hbar} (\bp)$, and let $q^\bZ I(\bp)$ denotes the union of all its $q$-shifts.

\begin{Definition}
\begin{enumerate}[1)]
\setlength{\parskip}{1ex}

\item Define $K_{\sG \times \sA \times \bC^*_\hbar \times \bC^*_q} (\pt)_{\bp}$ the localization of $K_{\sG \times \sA \times \bC^*_\hbar \times \bC^*_q} (\pt)$ at the multiplicative subset 
$$
K_{\sG \times \sA \times \bC^*_\hbar \times \bC^*_q} (\pt) - q^\bZ I(\bp).
$$
There is an evaluation map $(-)|_\bp : K_{\sG \times \sA \times \bC^*_\hbar \times \bC^*_q} (\pt)_{\bp} \to K_{\sA \times \bC^*_\hbar \times \bC^*_q} (\bp)_\loc$, defined by the quotient at the maximal ideal $I(\bp)$. 

\item Define $K_{\sG \times \sA \times \bC^*_\hbar \times \bC^*_q} (\pt)_X$ as the localization of $K_{\sG \times \sA \times \bC^*_\hbar \times \bC^*_q} (\pt)$ at the multiplicative subset 
$$
K_{\sG \times \sA \times \bC^*_\hbar \times \bC^*_q} (\pt) - \bigcup_{\bp\in X^{\sA \times \bC^*_\hbar}} q^\bZ I(\bp).
$$
There is an evaluation map $(-)|_X : K_{\sG \times \sA \times \bC^*_\hbar \times \bC^*_q} (\pt)_X \to K_{\sA \times \bC^*_\hbar \times \bC^*_q} (X)_\loc$, defined by the quotient at the intersection of ideals $\bigcap_{\bp \in X^{\sA \times \bC^*}} I(\bp)$. 
\end{enumerate}
\end{Definition}
Note that the coefficient ring $K_{\sG \times \sA \times \bC^*_\hbar \times \bC^*_q} (\pt)_X$ is the base change of the abelianized coefficient ring $K_{\sK \times \sT \times \bC^*_\hbar \times \bC^*_q} (\pt)_{X^\ab}^\sW$ via the composition
$$
K_{\sK \times \sT \times \bC^*_\hbar \times \bC^*_q} (\pt)_{X^\ab}^\sW \to K_{\sK \times \sA \times \bC^*_\hbar \times \bC^*_q} (\pt)_{X^\ab}^\sW \to K_{\sG \times \sA \times \bC^*_\hbar \times \bC^*_q} (\pt)_X.
$$
In particular, elements of the form $1- q^\bZ s^{\alpha}$ and $1 - q^\bZ \hbar s^\alpha$ are invertible. 

Let $\cA_\sA (\sK, \sN)_{X^{\ab}}$ be the virtual Coulomb branch with \emph{mixed polarization} with respect to $X$ defined in Definition \ref{Defn-cA-X}. 
We define
$$
\cA_\sA (\sK, \sN)^\sW_X := K_{\sG \times \sA \times \bC^*_\hbar \times \bC^*_q} (\pt)_X \otimes_{K_{\sK \times \sT \times \bC^*_\hbar \times \bC^*_q} (\pt)_{X^\ab}^\sW} \cA_\sA (\sK, \sN)_{X^{\ab}}^\sW .
$$

\subsection{Vertex function, quantum $q$-difference module and Bethe algebra} \label{sec-nonab-D-mod}

Consider the element
$$
\Delta (s) := \prod_{\alpha\in \Phi} \frac{(\hbar s^\alpha)_\infty}{(q s^\alpha)_\infty} \quad \in \quad  K_{\sG \times \sA \times \bC^*_\hbar \times \bC^*_q} (\pt)_X,
$$
where we consider the completion over $\bC[[q]]$.

Let $X$ be as in Section \ref{sec-nonab-vertex}, and $V^{(\tau(s))} (X; Q)$ (resp. $\widetilde V^{(\tau(s))} (X; Q)$) be its vertex function (resp. renormalized vertex function). 
Let $\bp\in X^{\sA\times \bC^*_\hbar}$ be a fixed point, and $\tilde \bp \in Y^{\sT} \subset (X^\ab)^{\sT}$ be a fixed point in $Y$, which projects to $\bp$. 
Let $M(\tilde\bp)$ be the Verma module of the abelian virtual Coulomb branch $\cA_\sT (\sK, \sN)_{\tilde\bp}$ associated with $\tilde\bp$, with highest weight vector $v$. 
We can now express the vertex function of the nonabelian Higgs branch $X$ in terms of the Verma module of the abelianized virtual Coulomb branch. 

\begin{Proposition} \label{Prop-V-Verma-nonab}
Assume that $X$ admits isolated $(\sA\times \bC^*_\hbar)$-fixed points. 
The descendent vertex function of $X$ restricted to $\bp$ is
\ben
V^{(\tau(s))} (X; Q) \Big|_\bp &=& \frac{1}{\Delta(s) |_\bp} \cdot V^{(\tau (s) \Delta (s))} (X^\ab; \tilde Q) \Big|_{\tilde Q^d \mapsto Q^{\bar d}}, 
\een
where $\bar d$ is the image of $d$ in $\pi_1 (\sG)$.
\end{Proposition}

\begin{proof}
This follows from Proposition \ref{thm-V-Verma} and \ref{prop-nonab-V}. 
\end{proof}

For any $\chi\in \chr(\sK)$, let $q^{\chi \tilde Q\partial_{\tilde Q} }$ the $q$-difference operator defined in Section \ref{sec-q-diff-ab}.  
If $\chi\in \chr(\sK)^\sW$, let $q^{\chi Q\partial_Q}$ be its restriction to functions in $Q$.

\begin{Corollary} \label{cor-q-diff-nonab}
Assume that $X$ admits isolated $(\sA\times \bC^*_\hbar)$-fixed points. 
Let $\chi\in \chr(\sK)^\sW$, and $c \in \Eff(X^\ab) \cap \cochar(\sG)_+$. 
Then 
$$
q^{\chi Q \partial_Q } \widetilde V^{(\tau (s))} (X; Q) =  \widetilde V^{(s^\chi \tau (s))} (X; Q) ,
$$
$$ 
Q^{\bar c} \widetilde V^{(\tau (s))} (X; Q) = \widetilde V^{ \left( |\sW|^{-1} \sum_{w\in \sW} \br_{wc} \tau (s) \br_{-wc} \cdot \prod_{\alpha\in \Phi} \frac{(q s^\alpha)_{-\la \alpha, w c \ra}}{(\hbar s^\alpha)_{-\la \alpha, wc\ra }} \right) } (X; Q),
$$
$$
0 = \widetilde V^{ \left( e_+^{-1} \sum_{w\in \sW} \sgn(w) \br_{wc} \tau (s) \br_{-wc} \cdot \prod_{\alpha\in \Phi} \frac{(q s^\alpha)_{-\la \alpha, w c \ra}}{(\hbar s^\alpha)_{-\la \alpha, wc\ra }} \right) } (X; Q) . 
$$
\end{Corollary}

\begin{proof}
This follows from Proposition \ref{Prop-V-Verma-nonab} and Corollary \ref{cor-q-diff}.
\end{proof}

\begin{Remark}
These identities give a description to the quantum $q$-difference module generated by the descendent vertex functions. 
Note that the product $\br_{wc} \tau (s) \br_{-wc} = \br_{wc} \br_{-wc} \tau (s)|_{s^\chi \mapsto q^{\langle \chi, wc \ra}}$ can be computed explicitly using (\ref{ab-rel-eff}) and is a well-defined descendent insertion.
\end{Remark}

Consider the ring of $q$-difference operators
$$
\cD_q :=  \bC [[Q^{\Eff(X)}]] [ q^{\chi Q \partial_{Q}} , \chi\in \chr(\sK)^\sW ] ,
$$
which naturally acts on the space $K_{\sA \times \bC^*_\hbar \times \bC^*_q} (X)_\loc [[Q^{\Eff(X)}]]$. 

On the other hand, consider the Cartan subalgebra $\cA_\sA^0 (\sK, \sN)^\sW_X$ of the abelianized virtual Coulomb branch. 
It admits a $\cD_q$-action, given by
$$
q^{\chi Q \partial_{Q}} \bullet \tau(s) := s^\chi \tau (s), 
\qquad 
Q^{\bar c} \bullet \tau(s) := \frac{1}{ |\sW| } \sum_{w\in \sW} \br_{wc} \tau (s) \br_{-wc} \cdot \prod_{\alpha\in \Phi} \frac{(q s^\alpha)_{-\la \alpha, w c \ra}}{(\hbar s^\alpha)_{-\la \alpha, wc\ra }}, 
$$
where $\chi\in \chr(\sK)^\sW$, $c\in \Eff(X^\ab) \cap \cochar(\sG)_+$. 
Recall that $e_+ = s^\rho \prod_{\alpha\in \Phi_+} (1-s^{-\alpha})$. 

\begin{Theorem} \label{Thm-D-mod-nonab}

Assume that $X$ admits isolated $(\sA \times \bC^*_\hbar)$-fixed points. 
\begin{enumerate}[1)]
\setlength{\parskip}{1ex}

\item The $\cD_q$-module generated by all descendent vertex functions $\widetilde V^{(\tau(s))} (X ;Q)$ over $K_{\sA \times \bC^*_\hbar} (\pt)_\loc$ is isomorphic to \footnote{The tensor product $\otimes_\bC$ is as $\bC [ q^{\chi Q \partial_{Q}} , \chi\in \chr(\sK)^\sW ]$-modules, not as algebras.}
$$
\dfrac{
\bC [[Q^{\Eff(X)}]] \otimes_\bC \cA_\sA^0 (\sK, \sN)^\sW_X
}{
\left\la 1\otimes \br_{wc} \tau (s) \br_{-wc} \cdot  \prod_{\alpha\in \Phi} \frac{(q s^\alpha)_{-\la \alpha, w c \ra}}{(\hbar s^\alpha)_{-\la \alpha, wc\ra }} - Q^{\bar c} \otimes \tau(s)   \right\ra \cap \left( \bC [[Q^{\Eff(X)}]] \otimes_\bC \cA_\sA^0 (\sK, \sN)^\sW_X \right)
}, 
$$
where $c$ ranges over $\Eff(X^\ab) \cap \cochar(\sG)_+$ and $w$ ranges over $\sW$. 
$\widetilde V^{(\tau (s))} (X; Q)$ is sent to $1 \otimes \tau(s)$. 

\item The Bethe algebra of $X$ over $K_{\sA \times \bC^*_\hbar} (\pt)_\loc$ is isomorphic to
$$
\dfrac{ 
\cA_\sA^0 (\sK, \sN)^\sW_{X, q=1} [[Q^{\Eff(X)}]] }{ \left\la  \br_{wc} \br_{-wc} \big|_{q=1} \cdot \prod_{\alpha\in \Phi} \frac{(1-s^\alpha)^{-\la \alpha, w c \ra}
}{
(1- \hbar s^\alpha)^{-\la \alpha, wc\ra }}  - Q^{\bar c} \right\ra \cap \cA_\sA^0 (\sK, \sN)^\sW_{X, q=1} [[Q^{\Eff(X)}]] 
} , 
$$
where $c$ ranges over $\Eff(X^\ab) \cap \cochar(\sG)_+$ and $w$ ranges over $\sW$. $\tau(s) |_{q=1} \in \cA^0_\sA (\sK, \sN)_{X, q=1}$ is sent to the quantum tautological class $\widehat\tau (Q)$. 

\end{enumerate}

\end{Theorem}

\begin{proof}
The proof is the same as in Theorem \ref{Thm-ab-D-mod}. 
The only nontrivial part to check is that the limit of the Bethe algebra as $Q\to 0$ coincides with the ordinary $K$-theory $K_{\sA \times \bC^*_\hbar} (X)_\loc$. 
Recall from Lemma \ref{Lemma-ab-nonab-K} that the relations of $K_{\sA \times \bC^*_\hbar} (X)_\loc$ are given by $e_+^{-1} \sum_{w\in \sW} \sgn(w) w f$, where $f$ come from the relations of the abelian $K$-theory, i.e. the ideal given in (\ref{K-ring}). 
These relations are recovered by the products $e_+^{-1} \sum_{w\in \sW} \sgn(w) \br_{wc} \br_{-wc} |_{q=1}$. 
The $Q\to 0$ limits of the Bethe relations then coincide with the ones in the ordinary $K$-theory, since $e_+$ is invertible in the localized ring. 
\end{proof}

Note that the $q$-difference operators act as $q^{\chi Q \partial_{Q}} \bullet \tau(s) = s^\chi \tau (s)$. 
The $s^\chi$'s then can be multiplied from the left by the rule
$$
(1 \otimes s^\chi ) ( Q^{\bar c} \otimes \tau (s)) = q^{\chi Q \partial_Q} ( Q^{\bar c} \otimes \tau (s))  = Q^{\bar c} \otimes q^{\la \chi, c\ra} s^\chi \tau (s). 
$$
We then have the following description of the $q$-difference module over the non-localized coefficient ring. 
\begin{Corollary} \label{Cor-refinement}
Given $w\in \sW$ and $c\in \Eff(X^\ab) \cap \cochar(\sG)_+$, let $f_{wc}(s), g_{wc}(s) \in \bC[s^{\pm 1}, a^{\pm 1}, \hbar^{\pm 1}, q^{\pm 1}]$ be such that $\frac{f_{wc} (s)}{g_{wc} (s)} = \br_{wc} \tau (s) \br_{-wc} \cdot  \prod_{\alpha\in \Phi} \frac{(q s^\alpha)_{-\la \alpha, w c \ra}}{(\hbar s^\alpha)_{-\la \alpha, wc\ra }}$.
Then the $\cD_q$-module generated by all descendent vertex functions $\widetilde V^{(\tau(s))} (X ;Q)$ over $K_{\sA\times \bC^*_\hbar} (\pt)$ is
$$
\dfrac{
\bC [[Q^{\Eff(X)}]] \otimes_\bC \bC[s^{\pm 1}, a^{\pm 1}, \hbar^{\pm 1}, q^{\pm 1}]^\sW
}{
\left\la 1\otimes f_{wc} (s) - Q^{\bar c} \otimes g_{wc}(q^{\la \chi, w c \ra} s^\chi) \tau(s)   \right\ra \cap \left( \bC [[Q^{\Eff(X)}]] \otimes_\bC \bC[s^{\pm 1}, a^{\pm 1}, \hbar^{\pm 1}, q^{\pm 1}]^\sW \right)
}.
$$
\end{Corollary}

\begin{Remark}
In principle, just as in 2) of Corollary \ref{cor-q-diff}, one may try to compute $f_{wc} (s)$ and $g_{wc} (s)$ and obtain explicit scalar $q$-difference equations.
However, as in most of related prior works \cite{JMNT}, $q$-difference equations of the abelianized theory are easy to obtain, but those in terms of K\"ahler parameters $Q$ for the nonabelian theory are difficult to write down explicitly. 
\end{Remark}

\begin{Remark}
This result can be interpreted as a $K$-theoretic generalizaiton of the quantum Hikita conjecture \cite{KMP}. 
See Appendix \ref{sec-q-Hikita} for a discussion.
In particular, as $Q\to 0$, up to some invertible factors, the relations in the Bethe algebra are given by $\br_{wc} \br_{-wc}$, where $wc$ lives in the positive part of the Coulomb branch. 
Hence, the $Q\to 0$ limit of the Bethe algebra, i.e. the ordinary equivariant $K$-theory is expressed as the $B$-algebra in the sense of \cite{BLPW-sym-dual-2}.  
Our result then identifies the Bethe algebra, i.e. the quantum $K$-theory ring, with the $K$-theoretic virtual Coulomb branch, i.e. a deformation of the coordinate ring of the $K$-theoretic mirror. 
\end{Remark}

\begin{Example}
Let us consider a concrete example and see how one can obtain the Bethe algebra from the above results. 
In the case $X = T^*\Gr (k,n)$, with $k\leq n$, we have $\sG = GL(k)$, $\sN = \Hom(\bC^n, \bC^k)$, $\sK = (\bC^*)^k$. 
The abelianization is $X^\ab = (T^* \bP^{n-1})^k$. 
In this example the mixed polarization reduces to the canonical polarization $\Pol = \sN$. 

If we choose the cocharacter $c = (1, 0, \cdots, 0)$, and take $w\in S_k$ such that $wc = (0, \cdots, 1, \cdots, 0)$, where the $1$ lies in the $j$-th position, then we have
$$
\br_{wc} \br_{-wc} = \prod_{i=1}^n (-q^{1/2} \hbar^{-1/2})^{-1} \frac{(\hbar a_i^{-1} s_j)_{-1}}{(q a_i^{-1} s_j)_{-1}} = (-q^{1/2} \hbar^{-1/2})^{-n} \prod_{i=1}^n \frac{1 - a_i^{-1} s_j}{1 - q^{-1} \hbar a_i^{-1} s_j}, 
$$
$$
\prod_{\alpha\in \Phi} \frac{(q s^\alpha)_{-\la \alpha, w c \ra}}{(\hbar s^\alpha)_{-\la \alpha, wc\ra }} = \prod_{i \neq j} \frac{(q s_i / s_j)_{-1} }{(\hbar s_i / s_j)_{-1} } \frac{(q s_j / s_i)_1}{(\hbar s_j / s_i)_1} = \prod_{i\neq j} \frac{1 - q^{-1} \hbar s_i / s_j}{1 - s_i / s_j} \frac{1 - q s_j / s_i}{1 - \hbar s_j / s_i}. 
$$
The Bethe equation given in Theorem \ref{Thm-D-mod-nonab} is 
$$
(-\hbar^{1/2})^n \prod_{i=1}^n \frac{1 - \hbar a_i^{-1} s_j}{1 - a_i^{-1} s_j} \prod_{i\neq j} \frac{1 - \hbar s_i / s_j}{\hbar - s_i/s_j} = Q, \qquad \text{for any } 1\leq j\leq k, 
$$
which recovers the Bethe equations in \cite{PSZ, UY}. 
\end{Example}

\vspace{2em}

\section{Application: wall-crossing} \label{sec-wall-cx}

As an application of the virtual Coulomb branch, let us consider the wall-crossing phenomenon arising from variation of the stability condition $\theta$. 

\subsection{Wall-crossing for abelian virtual Coulomb branch}

Suppose we are in the case of Section \ref{sec-hypertoric}, where $\theta \in \chr (\sK)_\bR$ is a stability condition. 
Let $\cK (X)$ be the K\"ahler cone of $X$, defined as the chamber containing $\theta$. 
Let $\cK' \subset \chr(\sK)_\bR$ be another chamber, and let $\theta' \in \cK'$ be another stability condition. 
Let $X' := \mu^{-1} (0) /\!/_{\theta'} \sK$ be the hypertoric variety with stability condition $\theta'$. 

Recall that the effective cone $\Eff(X)$ is generated by all \emph{circuits}, i.e. primitive vectors in $\cochar(\sK)$ normal to the walls in $\chr(\sK)_\bR$, whose directions are determined by $\theta$. 
We call a wall $H$ \emph{separating}, if $\theta$ and $\theta'$ lie on the different sides of $H$. 
Correspondingly, a circuit $\rho$ for $X$ perpendicular to a separating wall $H$ is no longer a circuit for $X'$; but $-\rho$ is the circuit for $X'$ associated to $H$. 
We call such $\rho$ a \emph{reversing} circuit\footnote{Geometrically, a reversing circuit gives a contracted curve under the birational projection $X_\theta \to X_{\theta_0}$, where $\theta_0$ lies on the corresponding separating wall. See \cite{Kon}.}. 

\begin{Example}
Let us show how the wall-and-chamber structure changes for $X=\cA_2$ in Example \ref{A_2}. 
Let $\theta'$ be the stability condition such that $\theta'_2 > \theta'_1 >0$. 
The K\"ahler cone and effective cone for $X'$ are shown in Figure \ref{Fig-2}. 
We see that the hyperplane spanned by $\chi_3$ is a separating wall, and $\rho_3$ is a reversing circuit. 
\begin{figure}
\centering
\begin{tikzpicture}[scale=0.7]

\fill[gray!30] (0,0) -- (3,3) -- (0,3) -- cycle ;

\draw (-3,0) -- (3,0);
\draw (0,-3) -- (0,3);
\draw (-3,-3) -- (3,3); 

\draw[->] (0,0) -- (2,0) node[anchor=north west] {$\chi_1$}; 
\draw[->] (0,0) -- (0,2) node[anchor=south east] {$\chi_2$}; 
\draw[->] (0,0) -- (-2,-2) node[anchor=north west] {$\chi_3$}; 

\filldraw (1,2) circle (1pt) node[anchor = west] {$\theta'$};

\fill[gray!30] (9,0) -- (12,0) -- (12,3) -- (6,3) -- cycle ;

\draw (6,0) -- (12,0);
\draw (9,-3) -- (9,3);
\draw (6,3) -- (12,-3); 

\draw[->] (9,0) -- (11,0) node[anchor=north west] {$\rho_2$}; 
\draw[->] (9,0) -- (9,2) node[anchor=south east] {$\rho_1$}; 
\draw[->] (9,0) -- (11,-2) node[anchor=north east] {$\rho_3$}; 

\node at (10.5, 1.5) {$\Eff(X')$};

\end{tikzpicture}
\caption{Wall-and-chamber structures in $\chr(\sK)_\bR$ and $\cochar(\sK)_\bR$ for $X'$}  \label{Fig-2}
\end{figure}
\end{Example}

Recall that $\cA_\sT (\sK, \sN)_X$, the virtual Coulomb branch with mixed polarization, is defined as the subalgebra in $\cA_\sT (\sK, \sN)_\loc$, generated by $\br_{\pm d}$ for $d\in \Eff(X)$, where $\br_d = r_d (\Pol)$ are generators whose polarizations are determined by the cochambers. 
While the localized virtual Coulomb branch $\cA_\sT (\sK, \sN)_\loc$ is independent of $X$, the subalgebra $\cA_\sT (\sK, \sN)_X$ depends on $X$. 

Denote by $\br'_d$ the generators for $\cA_\sT (\sK, \sN)_{X'}$. 

\begin{Proposition} \label{wall-cx-br}
If $\rho \in \Eff(X)$ is a reversing circuit for $X$ and $X'$, then in $\cA_\sT (\sK, \sN)_\loc$,
$$
\br'_{\pm \rho} = \br_{\mp \rho}^{-1}. 
$$
Otherwise, if $\rho \in \Eff(X)$ is a non-reversing circuit, then $\br'_{\pm \rho} = \br_{\pm \rho}$. 
\end{Proposition}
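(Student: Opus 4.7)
The strategy is to express both $\br_{\pm\rho}$ and $\br'_{\pm\rho}$ as explicit elements of the ambient localized algebra $\cA_\sT(\sK,\sN)_\loc$ via Proposition \ref{prop-mix}, and then compare the two expressions directly.

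\textbf{Non-reversing case.} A non-reversing circuit $\rho$ is a circuit for both $X$ and $X'$, so $\rho\in\Eff(X)\cap\Eff(X')$. The formula of Proposition \ref{prop-mix}(1),
$$
\br_d=r_d\cdot\prod_{i:D_i<0}\left[(-q^{1/2}\hbar^{-1/2})^{D_i}\frac{(\hbar x_i)_{D_i}}{(q x_i)_{D_i}}\right]^{-1},
$$
depends only on $d$ and on the intrinsic Chern roots, not on the stability condition: its derivation only uses that $i\in\Pol[\cW]$ precisely when $\la\chi_i,d\ra>0$ (indices with $\la\chi_i,d\ra=0$ contribute trivially, as in the lemma preceding Definition \ref{defn-br_d}). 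Applied at $d=\pm\rho$ in both settings it produces the same element of $\cA_\sT(\sK,\sN)_\loc$, so $\br'_{\pm\rho}=\br_{\pm\rho}$.

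\textbf{Reversing case.} Now $\rho\in\Eff(X)$ but $-\rho\in\Eff(X')$, so $\br'_\rho$ is defined using a cochamber $\cW'\subset\Eff(X')$ containing $-\rho$ in its closure. Writing $R_i:=\la\chi_i,\rho\ra$, the polarization $\Pol[\cW']$ differs from $\Pol[\cW]$ precisely by swapping $\{i:R_i>0\}$ and $\{i:R_i<0\}$ (possible disagreements on indices with $R_i=0$ are harmless). Expanding (\ref{Xi-Phi}) with this polarization yields
$$
\br'_\rho=\prod_{i:R_i>0}\left[(-q^{1/2}\hbar^{-1/2})^{-R_i}\frac{(\hbar x_i)_{-R_i}}{(q x_i)_{-R_i}}\right]^{-1}\cdot r_\rho.
$$
On the other hand, Proposition \ref{prop-mix}(4) gives $\br_\rho\br_{-\rho}=\prod_{i=1}^n(-q^{1/2}\hbar^{-1/2})^{-R_i}\frac{(\hbar x_i)_{-R_i}}{(q x_i)_{-R_i}}$, so $\br_{-\rho}^{-1}=\br_\rho\cdot(\br_\rho\br_{-\rho})^{-1}$. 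Substituting the formula for $\br_\rho$ from Proposition \ref{prop-mix}(1), commuting the resulting scalar past $r_\rho$ via Lemma \ref{shift-r-x}, and applying the Pochhammer inversion identities of Appendix \ref{P-symbol}, the factors indexed by $\{R_i<0\}$ cancel against the corresponding factors in the full product $\prod_{i=1}^n$, leaving exactly the partial product over $\{R_i>0\}$ that appears in $\br'_\rho$. This establishes $\br'_\rho=\br_{-\rho}^{-1}$. The companion identity $\br'_{-\rho}=\br_\rho^{-1}$ then follows by applying the anti-automorphism $\tau$ of Lemma \ref{anti-isom} to both sides, since $\tau$ is intrinsic to the ambient localized algebra, fixes scalars, and sends $\br_d\mapsto\br_{-d}$ and $\br'_d\mapsto\br'_{-d}$.

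The principal technical point is the Pochhammer bookkeeping in the reversing case. A cleaner alternative that bypasses this calculation is to first verify the claim at the abelian-point level of Example \ref{ab-pt}, where a reversing circuit is literally some $\pm b_i$ and the identity $\tilde r'_{b_i}=\tilde r_{-b_i}^{-1}$ follows immediately from the quadratic relations (\ref{ab-pt-3}); the general abelian case then descends via the quantum Hamiltonian reduction (\ref{Ham-red}), since $\br_\rho$ is the image of a product of $\tilde r_{\pm b_i}$'s determined by the sign pattern of the $R_i$'s, and both effective cones for the abelian-point lift are related in the same prescribed way.
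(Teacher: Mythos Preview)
Your proposal is correct and follows essentially the same approach as the paper. Both expand $\br_{-\rho}$ and $\br'_{\rho}$ via Proposition~\ref{prop-mix}(1), commute scalar factors past the $r_{\pm\rho}$ using Lemma~\ref{shift-r-x}, and conclude via the Pochhammer identities of Appendix~\ref{P-symbol}. The only organizational difference is that the paper multiplies the two elements and verifies $\br_{-\rho}\cdot\br'_{\rho}=1$ directly, invoking the closed formula (\ref{ab-rel-special}) for $r_{-d}r_d$, whereas you rewrite $\br_{-\rho}^{-1}=\br_\rho\cdot(\br_\rho\br_{-\rho})^{-1}$ using Proposition~\ref{prop-mix}(4) and then match it against $\br'_\rho$. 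The paper's route is marginally cleaner, since the product $\br_{-\rho}\br'_\rho$ naturally produces $r_{-\rho}r_\rho$ and the cancellation becomes a one-line check; your route requires commuting scalars across $r_\rho$ before the $\{R_i<0\}$ factors visibly cancel, which you acknowledge but do not spell out. Your alternative via the abelian point and quantum Hamiltonian reduction is plausible but sketchier than the direct computation, since the notion of ``reversing circuit'' does not literally lift to $(\tilde\sK,\sN)$ without further explanation.
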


\begin{proof}
Apply the explicit presentation in Proposition \ref{prop-mix}, and note that $-\rho\in \Eff(X')$. 
$\br_{-\rho} \cdot \br'_\rho$ is then
\ben
 && \prod_{i: D_i < 0} \left[ (- q^{1/2} \hbar^{-1/2} )^{D_i} \frac{(\hbar x_i)_{D_i}}{(q x_i )_{D_i} } \right]^{-1} \cdot r_{-d}  \cdot \prod_{i: -D_i < 0} \left[ (- q^{1/2} \hbar^{-1/2} )^{-D_i} \frac{(\hbar x_i)_{-D_i}}{(q x_i )_{-D_i} } \right]^{-1} \cdot r_{d} \\
&=& \prod_{i: D_i < 0} \left[ (- q^{1/2} \hbar^{-1/2} )^{D_i} \frac{(\hbar x_i)_{D_i}}{(q x_i )_{D_i} } \right]^{-1} 
\prod_{i: D_i > 0} \left[ (- q^{1/2} \hbar^{-1/2} )^{-D_i} \frac{(\hbar q^{D_i} x_i)_{-D_i}}{(q q^{D_i} x_i )_{-D_i} } \right]^{-1} r_{-d} r_d. 
\een
where $D_i:= \la \chi_i, \rho \ra$. 
One can then see the RHS is actually 1, by (\ref{qd-shift-x}) and (\ref{ab-rel-special}). 
The case for $\br'_{-\rho}$ is similar. 
\end{proof}

\subsection{Wall-crossing for quantum $q$-difference modules}

Now suppose that we are in the case of Section \ref{sec-nonab-D-mod}. 
Let $\theta$ and $\theta'$ be stability conditions chosen in $\chr(\sK)_\bR^\sW$, and $X$, $X'$ be the corresponding nonabelian holomorphic symplectic quotients.  
We assume that $X$ and $X'$ satisfy Assumption \ref{Ass}, and admits isolated $(\sA \times \bC^*_\hbar)$-fixed points. 

By Theorem \ref{Thm-D-mod-nonab}, one can see that the relations in the presentation of the quantum $q$-difference modules (and also the Bethe algebra) only involves polynomials in $Q$-variables. 
More precisely, let $\cD\cM^\pol (X)$ denote
$$
\dfrac{
\bC [Q^{\Eff(X)}] \otimes_\bC \bC[s^{\pm 1}, a^{\pm 1}, \hbar^{\pm 1}, q^{\pm 1}]^\sW
}{
\left\la 1\otimes \br_{wc} \tau (s) \br_{-wc} \cdot  \prod_{\alpha\in \Phi} \frac{(q s^\alpha)_{-\la \alpha, w c \ra}}{(\hbar s^\alpha)_{-\la \alpha, wc\ra }} - Q^{\bar c} \otimes \tau(s)   \right\ra \cap \left( \bC [Q^{\Eff(X)}] \otimes_\bC \bC[s^{\pm 1}, a^{\pm 1}, \hbar^{\pm 1}, q^{\pm 1}]^\sW \right)
},
$$
which is a module over the subring of $q$-difference operators 
$$
\cD_q^{\mathrm{pol}} (X) :=  \bC [Q^{\Eff(X)}] [ q^{\chi Q \partial_{Q}} , \chi\in \chr(\sK)^\sW ] .
$$
Then the quantum $q$-difference module in Corollary \ref{Cor-refinement} is
$$
\bC [[Q^{\Eff(X)}]] \otimes_{\bC [Q^{\Eff(X)}]} \cD\cM^\pol (X). 
$$
The same construction works for $X'$, and also for the Bethe algebras. 

Consider the ring of $q$-difference operators
$$
\cD_q^\rat :=  \bC [Q^{\pm \Eff (X)}] [ q^{\chi Q \partial_{Q}} , \chi\in \chr(\sK)^\sW ] ,
$$
which contains both $\cD_q^{\mathrm{pol}} (X)$ and $\cD_q^{\mathrm{pol}} (X')$. 
To compare the two $\cD_q$-modules, we need to consider the common ring of $q$-difference operators $\cD_q^\rat$.

\begin{Theorem} \label{Thm-wall-cx}
Based changed over $\cD_q^\rat$, the quantum $q$-difference modules $\cD_q^{\mathrm{pol}} (X)$ and $\cD_q^{\mathrm{pol}} (X')$ generated by descendent vertex functions $\widetilde V^{(\tau(s))} (X; Q)$ and $\widetilde V^{(\tau(s))} (X'; Q)$ for $X$ and $X'$ over $K_{\sA\times \bC^*_\hbar} (\pt)$ are isomorphic. 
In particular, their $q$-difference equations are the same.
The same holds for the Bethe algebras. 
\end{Theorem}

\begin{proof}
First one observes that the kernel of the $q$-difference module has the same intersection with $\bC[s^{\pm 1}, a^{\pm 1}, \hbar^{\pm 1}, q^{\pm 1}]^\sW$ if we take all $\tau(s) \in \cA_\sA^0 (\sK, \sN)_\loc$, not necessarily the $\sW$-invariant polynomials.
Moreover, the relations are generated by those where $c$ are circuits for $X^\ab$. 
Therefore, it suffices to show that for those $c$ which are \emph{reversing} circuits for $X^\ab$ and $(X')^\ab$, we obtain the same relations\footnote{Note that since $\theta$, $\theta'$ are $\sW$-invariant, $c$ is reversing if and only if $wc$ is, for all $w\in \sW$.}.

To see that, we let $\tau'(s) := \br_{wc} \tau (s) \br_{-wc} \cdot  \prod_{\alpha\in \Phi} \frac{(q s^\alpha)_{-\la \alpha, w c \ra}}{(\hbar s^\alpha)_{-\la \alpha, wc \ra }}$, invert the above relation, and apply Proposition \ref{wall-cx-br}:
\ben
1 \otimes \tau' (s) &=& Q^{ \bar c} \otimes \br_{wc}^{-1} \cdot \tau' (s) \prod_{\alpha\in \Phi} \frac{(\hbar s^\alpha)_{-\la \alpha, w c \ra}}{(q s^\alpha)_{-\la \alpha, wc \ra }} \cdot \br_{-wc}^{-1} \\
&=& Q^{\bar c} \otimes \br'_{-wc} \cdot \tau' (s) \prod_{\alpha\in \Phi} \frac{(\hbar s^\alpha)_{-\la \alpha, w c \ra}}{(q s^\alpha)_{-\la \alpha, wc \ra }} \cdot \br'_{wc} \\
&=& Q^{\bar c} \otimes \br'_{-wc} \tau' (s)  \br'_{wc} \cdot \prod_{\alpha\in \Phi} \frac{(\hbar q^{\la \alpha, wc \ra }s^\alpha)_{-\la \alpha, w c \ra}}{(q q^{\la \alpha, wc \ra } s^\alpha)_{-\la \alpha, wc \ra }}. 
\een
Further multiplied by $Q^{- \bar c} \otimes 1$, this is the same as the relation for $-\bar c \in \Eff(X')$ by (\ref{qd-shift-x}). 
\end{proof}

\begin{Remark}
This proves the \cite[Conjecture 1]{Din-cap}, which states that the $q$-difference equations satisfied by vertex functions stay unchanged under the variation of GIT. 
The same statement for quantum cohomology holds by \cite{MO}.
\end{Remark}

\appendix

\vspace{1ex}

\section{Pochhammer symbols} \label{P-symbol}

The infinite Pochhammer symbol is defined as $(x; q)_\infty := \prod_{m=0} (1-q^m x)$. 
For any $d\in \bZ$, the finite Pochhammer symbol is defined as
$$
(x; q)_d := \frac{(x; q)_\infty}{(q^d x; q)_\infty} = \left\{ \begin{aligned}
& (1-x) \cdots (1-q^{d-1} x), && \qquad d>0 \\
& 1 , && \qquad d = 0\\
& \frac{1}{(1-q^{-1} x) \cdots (1-q^d x) }, && \qquad d<0 . 
\end{aligned} \right.  
$$
We will denote it simply by $(x)_d$ if the unit of progression is $q$. 

$(x)_\infty$ is only well-defined this way as a holomorphic function for $x\in \bC$, if we take $q$ to be a complex number with $|q|<1$. 
However, $(x)_d$ is actually a rational function in $x$ and $q$, and it is straightforward to check the useful identities
\begin{equation}
(x; q^{-1})_d = (-1)^d x^{d} q^{-d(d-1)/2} \cdot (x^{-1})_d, 
\end{equation}
\begin{equation} \label{qd-shift-x}
(q^{-d} x)_d = (q^{-1} x; q^{-1})_d = \frac{1}{(x)_{-d}}, 
\end{equation}
\begin{equation} \label{id-inv}
(-q^{1/2} \hbar^{-1/2} )^{d} \cdot \frac{(\hbar x )_d }{(q x)_d } = (-q^{1/2} \hbar^{-1/2} )^{-d} \cdot \frac{(x^{-1} )_{-d} }{(q \hbar^{-1} x^{-1} )_{-d} }  , 
\end{equation}
for any $d\in \bZ$. 

The Jacobi theta function is defined as
$$
\vartheta (x) \ := \ (x^{1/2} - x^{-1/2} ) \cdot (qx)_\infty \cdot  (qx^{-1})_\infty  \ = \ x^{1/2} \cdot (qx)_\infty \cdot (x^{-1})_\infty , 
$$
which satisfies
$$
\vartheta (x^{-1}) = -\vartheta (x), \qquad \vartheta (qx) = - q^{-1/2} x^{-1} \vartheta (x).  
$$

\vspace{1ex}

\section{Relationship with quantum Hikita conjecture} \label{sec-q-Hikita}

In this appendix, we discuss the relationship between our quantum $q$-difference module and the $D$-module $M$ obtained in \cite{KMP} for the quantum Hikita conjecture. 
We restrict to the abelian case.
Let $\sG = \sK$ and $\sN$ be as in Section \ref{sec-ab}. 

First, let us give a description of the virtual Coulomb branch in terms of BFN's Coulomb branch.
By the computation in Proposition \ref{thm-r_d}, the $K$-theoretic quantized Coulomb branch $\cA_\sT^\BFN (\sK, \sN)$ is generated by $r_d^\BFN$ for $d\in \cochar(\sK)$, with relations
\begin{equation} \label{BFN-ab-rel}
r_c^\BFN \cdot r_d^\BFN = \prod_{C_i > 0 >D_i} (x_i^{-1} q^{C_i-1}; q^{-1} )_{\delta_i} \prod_{C_i < 0 < D_i} (x_i^{-1} q^{C_i} )_{\delta_i} \cdot r_{c+d}^\BFN,
\end{equation}
where $C_i = \la \chi_i, c\ra$, $D_i = \la \chi_i, d\ra$, $\delta_i = \min \{ |C_i|, |D_i| \}$. 

Let $\xi_i = c_1^{\sT \times \bC^*_\hbar} (x_i)$ be the weights of the equivariant parameters $x_i = a_i s^{\chi_i}$. 
The \emph{cohomological limit} from $K$-theory to cohomology is
\begin{equation} \label{coh-lim}
1 - x_i^{-1} q^{-N}  \quad \to \quad \xi_i+ N \eta. 
\end{equation}
Under this limit, the relation (\ref{BFN-ab-rel}) tends to \cite[(4.7)]{BFN} \footnote{We omit an extra $\frac{1}{2} \hbar$ in \cite{BFN}, or absorb it into $\gamma_i$. The $\eta$ here is the $\hbar$ in \cite{BFN}. }, which is also the same as \cite[(10)]{KMP}, known as the hypertoric enveloping algebra of the mirror. 
We denote the cohomological limit of the generators by $r_c^{\BFN, H}$.

Let $X$ be a hypertoric variety as in Section \ref{sec-hypertoric}, and $\cA_\sT (\sK, \sN)_X$ be the virtual Coulomb branch with mixed polarization. 
Recall from Proposition \ref{prop-mix} that for $c\in \Eff(X)$, 
$$
\br_c \cdot \br_{-c} = \prod_{i=1}^n \left[ (- q^{1/2} \hbar^{-1/2} )^{-C_i} \frac{(\hbar x_i)_{ -C_i } }{(q x_i)_{-C_i } } \right] .
$$
Consider the quantum $q$-difference module in Theorem \ref{Thm-ab-D-mod},
$$
\bC [[Q^{\Eff(X)}]] \otimes_\bC \cA^0_\sT(\sK, \sN)_X \, / \, \la 1 \otimes \br_c \tau(s) \br_{-c}  - Q^c \otimes \tau(s) , \ c\in \Eff(X) \ra .
$$
Note that in the abelian case the $q$-difference module is generated by the identity descendent $\tau(s) = 1$. 
We can furthermore consider its refinement (see Corollary (\ref{Cor-refinement}))
\begin{equation} \label{refinement}
\frac{
\bC [[Q^{\Eff(X)}]] \otimes_\bC \bC[s^{\pm 1}, a^{\pm 1}, \hbar^{\pm 1}, q^{\pm 1}] 
}{ 
\la 1 \otimes f_c (s)  - Q^c \otimes g_c(q^{\la \chi, c\ra} s^\chi) , \ c\in \Eff(X) \ra 
}, 
\end{equation}
where $f_c(s), g_c(s)$ are two Laurent polynomials such that $\br_c \br_{-c} = \frac{f_c(s)}{g_c(s)}$. 
They can be computed explicitly, in which way we can obtain the $q$-difference equations in Corollary \ref{cor-q-diff} 2):
$$
f_c (s) = (-q^{1/2}\hbar^{-1/2})^{-\la \det \sN, c\ra} \prod_{C_i>0} (x_i; q^{-1})_{C_i} \prod_{C_i<0} (\hbar x_i)_{-C_i}, 
$$ 
$$
g_c(s) = \prod_{C_i>0} (q^{-1} \hbar x_i; q^{-1})_{C_i} \prod_{C_i<0} (q x_i)_{-C_i}
$$
Here we have used (\ref{qd-shift-x})

We will consider the \emph{mirror Calabi--Yau limit}\footnote{In enumerative 3d mirror symmetry \cite{SZ}, the $\hbar$-equivariant parameter for the mirror is $\hbar^! = q \hbar^{-1}$. So the limit $\hbar = q$ corresponds to the CY limit $\hbar^! = 1$ for the mirror.} $\hbar = q$. 
We denote the limit $f_c(s) |_{\hbar = q}$ by $\bar f_c(s)$, and denote its cohomological limit by $\bar f_c^H (s)$.

\begin{Theorem}
Under the mirror CY specialization $\hbar = q$ and the cohomological limit (\ref{coh-lim}), the quantum $q$-difference module (\ref{refinement}) specializes to the quantum $D$-module $M$ in \cite[Section 6.4]{KMP} for the theory mirror to $(\sK, \sN)$.
\end{Theorem}

\begin{proof}
The effective cone $\Eff(X)$ in our case corresponds to the cone $\mathbb{N} \Sigma_+$ in \cite{KMP}.
Note that we set $\hbar = q$, $\bar f_c (s) = \bar g_c(s)$.
If we furthermore apply the cohomological limit (\ref{coh-lim}), then $\bar f_c^H (s) = (-1)^{\la \det\sN, c\ra} r_c^{\BFN, H} r_{-c}^{\BFN, H}$.
The relation in (\ref{refinement}) then becomes
$$
1 \otimes (-1)^{\la \det\sN, c\ra} r_c^{\BFN, H} r_{-c}^{\BFN, H}  - Q^c \otimes (r_c^{\BFN, H} r_{-c}^{\BFN, H} ) |_{\xi_i \mapsto \xi_i + \la \chi_i, c\ra \eta}, 
$$
which coincides with the relation $r(c)$ in \cite[Section 6.4]{KMP}.
The sign $(-1)^{\la \det\sN, c\ra}$ is absorbed into $Q^c$ due to the shift by the theta characteristic \cite{MO}.
\end{proof}

\bibliographystyle{abbrv}
\bibliography{reference}

\vspace{1ex}

\noindent
Zijun Zhou\\
Kavli Institute for the Physics and Mathematics of the Universe (WPI), \\
The University of Tokyo, \\
5-1-5 Kashiwanoha, Kashiwa, Chiba 277-8583, Japan.\\
zijun.zhou@ipmu.jp

\end{document}